\documentclass[12pt]{amsart}
\usepackage{amssymb,amsmath,amsfonts,latexsym, mathtools}
\usepackage{bm, enumerate}
\usepackage{graphicx}
\usepackage{color,soul}
\usepackage{hyperref}
\usepackage{dsfont}
\setlength{\textheight}{600pt} \setlength{\textwidth}{475pt}
\oddsidemargin -0mm \evensidemargin -0mm \topmargin -0pt
\newcommand{\newsection}[1]{\setcounter{equation}{0} \section{#1}}
\setcounter{footnote}{1}

\newcommand{\bea}{\begin{eqnarray}}
\newcommand{\eea}{\end{eqnarray}}

\newcommand{\clb}{\mathcal{B}}

\newcommand{\cld}{\mathcal{D}}
\newcommand{\cle}{\mathcal{E}}

\newcommand{\clh}{\mathcal{H}}
\newcommand{\clk}{\mathcal{K}}
\newcommand{\cll}{\mathcal{L}}
\newcommand{\clm}{\mathcal{M}}

\newcommand{\clq}{\mathcal{Q}}

\newcommand{\cls}{\mathcal{S}}

\newcommand{\clw}{\mathcal{W}}

\newcommand{\br}{\mathbf r}

\newcommand{\D}{\mathbb{D}}

\newcommand{\C}{\mathbb{C}}
\newcommand{\Z}{\mathbb{Z}}
\newcommand{\z}{\bm{z}}
\newcommand{\w}{\bm{w}}

\newcommand{\raro}{\rightarrow}

\def \qed {\hfill \vrule height6pt width 6pt depth 0pt}
\def\textmatrix#1&#2\\#3&#4\\{\bigl({#1 \atop #3}\ {#2 \atop #4}\bigr)}
\def\dispmatrix#1&#2\\#3&#4\\{\left({#1 \atop #3}\ {#2 \atop #4}\right)}
\newcommand{\be}{\begin{equation}}
\newcommand{\ee}{\end{equation}}
\newcommand{\ben}{\begin{eqnarray*}}
\newcommand{\een}{\end{eqnarray*}}

\newcommand{\bi}{\begin{itemize}}
\newcommand{\ei}{\end{itemize}}

\newtheorem{Theorem}{\sc Theorem}[section]
\newtheorem{Lemma}[Theorem]{\sc Lemma}
\newtheorem{Proposition}[Theorem]{\sc Proposition}
\newtheorem{Corollary}[Theorem]{\sc Corollary}
\newtheorem{Definition}[Theorem]{\sc Definition}
\newtheorem{Example}[Theorem]{\sc Example}
\newtheorem{Examples}[Theorem]{\sc Examples}
\newtheorem{Remark}[Theorem]{\sc Remark}

\newtheorem{Note}[Theorem]{\sc Note}
\newtheorem{Question}{\sc Question}
\newtheorem{ass}[Theorem]{\sc Assumption}
\newcommand{\bt}{\begin{Theorem}}
\def\beginlem{\begin{Lemma}}
\def\beginprop{\begin{Proposition}}
\def\begincor{\begin{Corollary}}
\def\begindef{\begin{Definition}}
\def\beginexamp{\begin{Example}}
\def\beginrem{\begin{Remark}}
\def\beginq{\begin{Question}}
\def\beginass{\begin{ass}}
\def\beginnote{\begin{Note}}
\newcommand{\et}{\end{Theorem}}
\def\endlem{\end{Lemma}}
\def\endprop{\end{Proposition}}
\def\endcor{\end{Corollary}}
\def\enddef{\end{Definition}}
\def\endexamp{\end{Example}}
\def\endrem{\end{Remark}}
\def\endq{\end{Question}}
\def\endass{\end{ass}}
\def\endnote{\end{Note}}

\begin{document}

\title{$\clw$-hypercontractions and their model}

\dedicatory{To the memory of Professor J\"{o}rg Eschmeier}

\author[Bhattacharjee]{Monojit Bhattacharjee}
\address{Department of Mathematics, Birla Institute of Technology and Science - Pilani, K. K. Birla Goa Campus, South Goa, 403726, India}
\email{monojitb@goa.bits-pilani.ac.in, monojit.hcu@gmail.com}

\author[Das]{B. Krishna Das}
\address{Department of Mathematics, Indian Institute of Technology Bombay, Powai, Mumbai, 400076, India}
\email{bata436@gmail.com, dasb@math.iitb.ac.in}

\author[Debnath]{Ramlal Debnath}
\address{Department of Mathematics, Indian Institute of Technology Bombay, Powai, Mumbai, 400076, India}
\email{ramlal@math.iitb.ac.in, ramlaldebnath100@gmail.com}

\author[Panja]{Samir Panja}
\address{Department of Mathematics, Indian Institute of Technology Bombay, Powai, Mumbai, 400076, India}
\email{spanja@math.iitb.ac.in, panjasamir2020@gmail.com}


\subjclass[2010]{47A13, 47A20, 47A45, 47A56, 46E22, 47B32, 32A36, 47B20} \keywords{Hypercontraction, weighted shift, characteristic function, Bergman space over the polydisc, commuting contractions, bounded analytic functions}

\begin{abstract}
We revisit the study of $\omega$-hypercontractions corresponding to a single weight sequence 
$\omega=\{\omega_k\}_{k\geq0}$ introduced by Olofsson in \cite{O} and find an analogue of Nagy-Foias characteristic function in this setting. Explicit construction of characteristic functions is obtained and it is shown to be a complete unitary invariant. By considering a multi-weight sequence $\clw$ and $\clw$-hypercontractions we extend Olofsson's work \cite{O} in the multi-variable setting. Model for $\clw$-hypercontractions is obtained by finding their dilations on certain weighted Bergman spaces over the polydisc corresponding to the multi-weight sequence $\clw$. This recovers and provides a different proof of the earlier work of Curto and Vasilescu \cite{CVPoly, CV} for $\gamma$-contractive multi-operators through a particular choice of multi-weight sequence.      

\end{abstract}

\maketitle

\section*{Notations}

\begin{list}{\quad}{}
\item $\mathbb N$\ \ \quad \quad Set of all natural numbers.
\item $\mathbb{Z}_+$ \quad \quad  Set of all positive integers.
\item $\mathbb{Z}^n_+$ \quad \quad $\{\alpha=(\alpha_1,\ldots, \alpha_n)\,|\, \alpha_i \in \mathbb{Z}_+, i=1,\ldots,n\}$.
\item 
$\mathbf e$\quad \quad \quad $(1,\dots,1)\in \Z_+^n$.
\item $\mathbb{R}_+$ \quad \quad  Set of all positive real numbers
including 0.
\item $\mathbb{R}_+^n$ \quad \quad $\{\gamma = (\gamma_1, \ldots, \gamma_n) \,|\, \gamma_i \in \mathbb{R}_+, i = 1,
\ldots, n\}$.
\item $\mathbb{C}^n$  \quad \quad Complex $n$-space.
\item $\bm{z}$ \quad \quad \; $(z_1, \ldots, z_n) \in \mathbb{C}^n$.
\item $\bm{z}^{\alpha}$ \quad \quad \,$z_1^{\alpha_1}\cdots
z_n^{\alpha_n}$ for all $\alpha\in\Z_+^n$.
\item $T$ \quad \quad \; $n$-tuple of commuting operators $(T_1, \ldots, T_n)$.
\item $T^{\alpha}$ \quad \quad $T_1^{\alpha_1} \cdots
T_n^{\alpha_n}$ for all $\alpha\in\Z_+^n$.
\item $\mathbb{D}^n$  \quad \quad Open unit polydisc $\{\z \,|\, |z_i|
<1, i=1, \ldots,n\}$.
\item $\clb(\clh)$ \quad Set of all bounded linear operators on a Hilbert space $\clh$.
\end{list}

\newsection{Introduction}

Dilations of operators on Hilbert spaces is a mathematical tool which is used to understand operators in terms of simple and well-understood operators. The basic idea of dilation of an operator $T$ on a Hilbert space $\clh$ is to find a well-understood operator $V$ on $\clk$ such that $T$ is a part of 
$V$, that is, $\clh \subseteq \clk$ is a $V^*$-invariant subspace and 
\[ T \cong P_{\clh}V|_{\clh} .\]
In a similar vain, by fixing a well-understood operator (more generally a class of well-understood operators), one can ask for a characterization of operators which are part of the fixed operator (or the class of operators). First significant result in this direction is due to Sz.-Nagy and Foias \cite{NF} which states that a contraction $T$ on a Hilbert space $\clh$ is pure (that is, $T^{*n} \to 0$ as $n \to \infty$ in the strong operator topology) if and only if $T$ is a part of the shift operator $M_z$ on a vector valued Hardy space $H^2_{\cle}(\D)$. Here for a Hilbert space $\cle$, 
\[ H^2_{\cle}(\D) = \{f: \D \to \cle \,|\, f(z)= \sum_{n \geq 0} a_n z^n, \sum_{n \geq 0} \|a_n\|^2 < \infty, z\in \D, a_n \in \cle \} \]
is the $\cle$-valued Hardy space over $\D$ and $M_z: H^2_{\cle}(\D) \to H^2_{\cle}(\D)$ is the shift operator, defined by, $(M_zf)(w) = wf(w)$ for all $w \in \D$. In other words, $T$ is a pure contraction if and only if there exist a Hilbert space $\cle$ and an $M_z^*$-invariant subspace $\clq \subseteq H^2_{\cle}(\D)$ such that 
\[ T \cong P_{\clq}M_z|_{\clq} .\]   
Another remarkable consequence of the Sz.-Nagy and Foias dilation result is that there exist a Hilbert space $\cle_*$ and a $\clb(\cle_*,\cle)$-valued inner multiplier 
$\theta_T$ (that is, $\theta_T:\mathbb D\to \clb(\cle_*,\cle)$ is a contractive analytic function such that $\theta_T$ is isometry-valued a.e. on the unit circle), known as the \textit{characteristic function} of $T$ (cf. \cite{NF}), such that 
\[ \clq = H^2_{\cle}(\D) \ominus \theta_T H^2_{\cle_*}(\D) .\]
The above dilation result is also extended for general contractions and is the stepping stone of Sz.-Nagy and Foias theory for contractions. 
Subsequently, by considering Bergman shift on vector-valued weighted Bergman space, Agler in his seminal paper \cite{AG} extended the above result. 
He showed that a contraction $T$ is a part of the Bergman shift $M_z$ acting on some  
$\cle$-valued weighted Bergman space $A^2_m(\cle)$ with kernel
\[ K_m(z,w) = (1 -z\bar{w})^{-m}I_{\cle}, \quad \quad (z,w \in \D) \]
 if and only if $T$ is a pure $m$-hypercontraction, 
that is, $T$ is a pure contraction and 
\[ K_m^{-1}(T,T^*):= \sum_{k=0}^m (-1)^k \binom{m}{k}T^kT^{*k} \geq 0. \]
 Recently, Olofsson in \cite{O} extended it further for shifts on weighted Bergman spaces corresponding to a certain class of weight sequences. He showed that if $\omega$ is a weight sequence then $T$ is part of the Bergman shift on some $\cle$-valued weighted Bergman space $A^2_{\omega}(\cle)$ if and only if $T$ is a pure $\omega$-hypercontraction. The reader is referred to Section 2 and Theorem 2.3 below for terminologies and a detailed description of the result. There are also several works in the multi-variable setting and an incomplete list of references is
 \cite{MV}, \cite{CV}, \cite{CVPoly}, \cite{AEM}, \cite{AE}, \cite{BDHS}, \cite{BDS1}, \cite{BD}, \cite{MR}, \cite{SHI} and \cite{Ball_Bolo}. 
 
 The purpose of the present article is twofold. 
 Firstly, we revisit the study of hypercontractions corresponding to a class of weight sequences as considered in ~\cite{O}; these hypercontractions are known as $\omega$-hypercontractions where $\omega$ is a weight sequence (see Section~\ref{Sec2} for the definition). Using dilations of such hypercontractions, we find an analogue of Sz.-Nagy and Foias characteristic functions in this setting.  Explicit construction of such characteristic functions is given in terms of triples which we call as characteristic triple. As expected, it is also shown that the characteristic function is a complete unitary invariant. This generalizes the work of ~\cite{BDS} and ~\cite{E} for the case $n=1$. The main ideas behind this consideration comes from a recent article ~\cite{BDS}. For recent developments on this topic in different context, the reader is referred to ~\cite{E}, ~\cite{BB}, \cite{MT},  \cite{JS1}, \cite{JS} and \cite{G}. Section~\ref{Sec2} is devoted to discuss these.  

 Secondly, we extend Olofsson's result \cite{O} in the polydisc setting. We say that $\clw=(\omega_1,\dots,\omega_n)$ is a multi-weight sequence if for all $i=1,\dots,n$, $\omega_i=\{\omega^{(i)}_m\}_{m\ge 0}$ is a weight sequence (see Definition 2.1 below for the definition of a weight sequence). We introduced the notion of $\clw$-hypercontractions corresponding to a multi-weight sequence $\clw$ and obtained their models by finding their dilations on some weighted Bergman space over the polydisc. Our method of multi-variable dilation is driven by the idea of using one variable dilation result, obtained by Olofsson, at a time and it is well supported by a commutant lifting type result obtained in this setting. 
 For a particular choice of multi-weight sequence $\clw$, we recover the dilation result of Curto and Vasilescu \cite{CVPoly} with a different proof.  
It is worth mentioning here that in the setting of unit ball in $\mathbb C^n$, Schilo (see Theorem 3.21 \cite{Sc}) extended Olofsson's results.  
 To describe our result succinctly we need to develop some notations and terminology. 
 
 Let $T=(T_1,\ldots,T_n) \in \clb(\clh)^n$ be an $n$-tuple of commuting contractions. For $\beta= (\beta_1,\ldots,\beta_n) \in \Z_+^n$ with $\beta\ge \mathbf e=(1,\dots,1)$ consider the multi-weight sequence  $\clw_{\beta}= (\omega_{\beta_1},\ldots, \omega_{\beta_n})$, where $\omega_{\beta_i}= \Big\{\frac{1}{{{\beta_i+l-1} \choose {l}}}\Big\}_{l\geq0}$ and $ {{\beta_i+l-1} \choose {l}}=\frac{(\beta_i+l-1)!}{(\beta_i-1)!l!}$, for all $i=1,\ldots,n$. Then for a Hilbert space $\cle$, $A^2_{\clw_{\beta}}(\cle)$ is the $\cle$-valued weighted Bergman space over $\D^n$ with kernel $K_{\clw_{\beta}}I_{\cle}$, where 
\[ K_{\clw_{\beta}}(\bm z, \bm w) = \prod_{i=1}^n \frac{1}{(1-z_i\bar{w}_i)^{\beta_i}}\quad (\bm z =(z_1,\ldots,z_n) \in \D^n, \bm w= (w_1,\ldots, w_n) \in \D^n). \] 
Set
\[ K_{\clw_{\beta}}^{-1}(T,T^*) = \sum_{0 \leq \alpha \leq \beta} (-1)^{|\alpha|} \frac{\beta!}{\alpha! (\beta - \alpha)!} T^{\alpha}T^{*\alpha} .\] 
Here for $\alpha = (\alpha_1,\ldots,\alpha_n), \beta= (\beta_1,\ldots,\beta_n) \in \Z_+^n, |\alpha| = \sum_{i=1}^n \alpha_i, \alpha! = \alpha_1!\cdots \alpha_n!$, and $\alpha \leq \beta$ if and only if $\alpha_i \leq \beta_i$ for all $i=1,\ldots,n$. We say that an $n$-tuple of commuting contraction is a part of the multi-shift 
$(M_{z_1},\ldots,M_{z_n})$ on $A^2_{\clw_{\beta}}(\cle)$ if there exists a joint $(M_{z_1}^*,\ldots,M_{z_n}^*)$-invariant subspace $\clq$ of $A^2_{\clw_{\beta}}(\cle)$ such that 
\[
T_i\cong P_{\clq}M_{z_i}|_{\clq}
\]
for all $i=1,\dots,n$. In this set up, Curto and Vasilescu \cite{CV} proved that for $\beta\in\mathbb Z_+^n$ with $\beta\ge \mathbf e$, a commuting tuple of contractions $T=(T_1,\ldots,T_n)$ is a part of the multi-shift $(M_{z_1},\ldots,M_{z_n})$ on $A^2_{\clw_{\beta}}(\cle)$ 
if and only if $T$ is pure and satisfies
\[ K_{\clw_{\beta}}^{-1}(T,T^*) \geq 0.\] 
In this article, 
we show that this result is true for a large class of kernels corresponding to multi-weight sequences. For a multi-weight sequence $\clw=(\omega_1,\dots,\omega_n)$, we set 
\[
S(\clw):=\{(\omega_{\lambda_1}',\dots,\omega_{\lambda_n}'): \omega_{\lambda_i}'\in\{\omega_{\lambda_i}, \mathds{1}\}, i=1,\dots,n\},
\]
where we denote by $\mathds 1$ the constant weight sequence 1. 
For a coefficient Hilbert space $\cle$, we denote by $A^2_{\clw}(\cle)$ the reproducing kernel Hilbert space corresponding to the kernel $K_{\clw}$ on the polydisc $\D^n$ defined by 
\[
K_{\clw}(\z,\w)=\sum_{\alpha\in\Z_+^n} \frac{1}{\omega^{(1)}_{\alpha_1}\cdots \omega^{(n)}_{\alpha_n}}(\z\bar{\w})^{\alpha}I_{\cle}.
\]
By one of the assumptions in the definition of multi-weight sequence, the analytic function 
\[
k_{\clw}(\z)=\sum_{\alpha\in\Z_+^n} \frac{1}{\omega^{(1)}_{\alpha_1}\cdots \omega^{(n)}_{\alpha_n}} \z^{\alpha}
\]
on $\D^n$ associated to $K_{\clw}$ does not vanish on $\D^n$. Suppose that 
\begin{equation}\label{inverse}
\frac{1}{k_{\clw}(\z)}= \sum_{\alpha\in\Z_+^n} c_{\alpha} \z^{\alpha} \quad (\z\in\D^n)
\end{equation}
is the Taylor expansion of $1/k_{\clw}$.
For an $n$-tuple of commuting contractions $T=(T_1,\ldots,T_n) \in \clb(\clh)^n$, and $ \br \in (0,1)^n$, using the hereditary functional calculus introduced by Agler in \cite{AG1}, we define  
\begin{equation}\label{D_{WT}}
D_{\clw,T}(\br):=\sum_{\alpha\in \mathbb{Z}^n_{+}}c_\alpha \br^{\alpha}T^{\alpha}T^{* \alpha}, 
\end{equation} where $c_{\alpha}$ as in (\ref{inverse})
and if $\br=(r_1,\dots,r_n)$, $\br^{\alpha}= r_1^{\alpha_1} \cdots r_n^{\alpha_n}$.

\begin{Definition}
An $n$-tuple of commuting contractions $T=(T_1,\ldots,T_n)\in \clb(\clh)^n$ is said to be an $\clw$-hypercontraction corresponding to a multi-weight sequence $\clw= (\omega_1, \ldots,\omega_n)$ if 
$ D_{\clw',T}(\br)\geq 0$ for all $\clw'\in S(\clw)$ and $\br \in (0,1)^n$. 
In addition, if each $T_i$ is a pure contraction, then we say that $T$ is a pure $\clw$-hypercontraction. 

\end{Definition}

With these terminologies, one of the main theorems of this article is the following.
\begin{Theorem}\label{main}
Let $\clw$ be a multi-weight sequence. An $n$-tuple of commuting contractions $T=(T_1,\dots,T_n)$ on $\clh$ is a part of the multi-shift $(M_{z_1},\dots,M_{z_n})$ on $A^2_{\clw}(\cle)$ for some Hilbert space $\cle$ if and only if $T$ is a pure $\clw$-hypercontraction. 
\end{Theorem}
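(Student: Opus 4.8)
The plan is to prove the two implications by rather different means. The forward (necessity) direction will be a short compression argument, whereas the converse (sufficiency) is the substantial half and will be obtained by induction on $n$, peeling off one coordinate at a time via Olofsson's one-variable theorem (Theorem~2.3) and transporting the remaining operators by the commutant lifting result established above.

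For necessity, suppose $T_i\cong P_{\clq}M_{z_i}|_{\clq}$ for a joint $(M_{z_1}^*,\dots,M_{z_n}^*)$-invariant subspace $\clq\subseteq A^2_{\clw}(\cle)$. Since $\clq$ is co-invariant we have $T_i^*=M_{z_i}^*|_{\clq}$, hence $T^{*\alpha}=M^{*\alpha}|_{\clq}$ for all $\alpha\in\Z_+^n$. Purity of each $M_{z_i}$ on $A^2_{\clw}(\cle)$ descends to the compression, so each $T_i$ is pure. For the positivity I would observe that, writing $M_z=(M_{z_1},\dots,M_{z_n})$, for every $\clw'\in S(\clw)$, $\br\in(0,1)^n$ and $h\in\clq$,
\[
\langle D_{\clw',T}(\br)h,h\rangle=\langle D_{\clw',M_z}(\br)h,h\rangle,
\]
because only the factors $T^{*\alpha}=M^{*\alpha}|_{\clq}$ enter. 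It therefore suffices to check $D_{\clw',M_z}(\br)\ge 0$ on all of $A^2_{\clw}(\cle)$. As the kernel factors, $k_{\clw}=\prod_i k_{\omega_i}$, the shift tuple is a tensor product of one-variable weighted shifts and $D_{\clw',M_z}(\br)$ factors accordingly as $\bigotimes_i D_{\omega_i',M_{z_i}}(r_i)\otimes I_{\cle}$, where $\omega_i'\in\{\omega_i,\mathds{1}\}$ is the $i$-th entry of $\clw'$ and each factor acts on $A^2_{\omega_i}(\D)$. Each one-variable factor is positive: for $\omega_i'=\omega_i$ this is the defect positivity of the $\omega_i$-shift from the one-variable theory of Section~2, while for $\omega_i'=\mathds{1}$ it is $I-r_iM_{z_i}M_{z_i}^*\ge 0$, valid since $M_{z_i}$ is a contraction and $r_i<1$. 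Hence $D_{\clw',M_z}(\br)\ge 0$ and necessity follows.

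For sufficiency I would induct on $n$, the base case $n=1$ being exactly Theorem~2.3. Assume the result in $n-1$ variables and let $T$ be a pure $\clw$-hypercontraction. First, letting $r_i\to 0$ for $i<n$ in the inequalities $D_{\clw',T}(\br)\ge 0$ annihilates all terms carrying some $\alpha_i\ge 1$ ($i<n$) and exhibits $T_n$ as a pure one-variable $\omega_n$-hypercontraction. Olofsson's theorem then co-extends $T_n$ to $M_{z_n}$ on $\clg:=A^2_{\omega_n}(\cle_n)$ for a coefficient space $\cle_n$. Since $T_1,\dots,T_{n-1}$ commute with $T_n$, the commutant lifting result produces commuting multiplier co-extensions $\tilde T_1,\dots,\tilde T_{n-1}$ on $\clg$, each commuting with $M_{z_n}$, so that $(\tilde T_1,\dots,\tilde T_{n-1},M_{z_n})$ co-extends $T$. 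I would then verify that $(\tilde T_1,\dots,\tilde T_{n-1})$ is a pure $\clw^-$-hypercontraction, $\clw^-:=(\omega_1,\dots,\omega_{n-1})$, apply the induction hypothesis to co-extend it to $(M_{z_1},\dots,M_{z_{n-1}})$ on $A^2_{\clw^-}(\clf)$, and finally assemble the pieces using the identification $A^2_{\clw}(\cle)\cong A^2_{\clw^-}\big(A^2_{\omega_n}(\cle)\big)$ coming from $k_{\clw}=k_{\clw^-}\,k_{\omega_n}$.

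The hard part is the inductive transfer of the hypotheses through the lifting: positivity of a hereditary expression is not inherited by co-extensions in general, so the crux is to prove that the multiplier tuple $(\tilde T_1,\dots,\tilde T_{n-1})$ on the enlarged space $\clg$ still satisfies every inequality $D_{\clw'',\tilde T}(\br')\ge 0$ with $\clw''\in S(\clw^-)$, and is pure. Here I expect to use decisively that the $\tilde T_i$ are multipliers commuting with $M_{z_n}$ and that the commutant lifting is chosen compatibly with the dilation of $T_n$: these force each $D_{\clw'',\tilde T}(\br')$ to be a multiplication operator by a positive operator-valued symbol inherited from the corresponding positivity of $T$ (recovered, as above, by suitable limits $r_n\to 0$ in the mixed inequalities indexed by $S(\clw)$). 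A secondary, more bookkeeping-type difficulty is to confirm that the shift in the $z_n$-coordinate and the shifts produced by the induction in the first $n-1$ coordinates are mutually compatible under the tensor identification above, so that the assembled co-extension genuinely lands in $A^2_{\clw}(\cle)$ with a single coefficient space $\cle$.
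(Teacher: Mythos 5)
Your necessity half is correct and complete: the compression identity $\langle D_{\clw',T}(\br)h,h\rangle=\langle D_{\clw',M_z}(\br)h,h\rangle$ for $h\in\clq$, combined with the tensor factorization $D_{\clw',M_z}(\br)=\bigl(\bigotimes_{i=1}^n D_{\omega_i',M_{z_i}}(r_i)\bigr)\otimes I_{\cle}$ and the two one-variable positivities, cleanly shows that the multi-shift, and hence any part of it, is a pure $\clw$-hypercontraction (the paper obtains this from Examples~\ref{examples}(iv)). The sufficiency half, however, is where the theorem lives, and what you have written there is a plan whose two deferred ``hard parts'' are precisely the content of the paper's proof; worse, as you have structured the induction it cannot close.

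The concrete gap is this. You lift $T_1,\dots,T_{n-1}$ to ``commuting multiplier co-extensions'' $\tilde T_i$ on $\clg=A^2_{\omega_n}(\cle_n)$ and then propose to apply the induction hypothesis to $(\tilde T_1,\dots,\tilde T_{n-1})$ as a tuple on the big space $\clg$. Even granting that this tuple is a pure $\clw^-$-hypercontraction (which you never prove), the induction hypothesis only returns an isometry $\Pi\colon\clg\to A^2_{\clw^-}(\clf)$ intertwining the $\tilde T_i^*$ with the $M_{z_i}^*$; it gives no control whatsoever over what $\Pi$ does to $M_{z_n}$, so the shift in the $n$-th variable need not survive the second dilation. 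This is not ``bookkeeping'': it is exactly the structural reason the paper's commutant lifting (Proposition~\ref{n-1 tuple lift}, with Remark~\ref{pure} in the pure case) transports the remaining operators to the \emph{defect space} rather than to the Bergman space. The missing mechanism — and the one place the mixed weights in $S(\clw)$ are genuinely used — is: for each $i<n$, applying Proposition~\ref{subtuple} to $\Lambda=\{i,n\}$ and taking $\clw'=(\mathds{1},\omega_n)\in S(\clw_\Lambda)$ yields $D_{\omega_n,T_n}(1)-T_iD_{\omega_n,T_n}(1)T_i^*\ge 0$, whence Douglas factorization produces a contraction $A_i$ on $\cld_{\omega_n,T_n}$ with $D_{\omega_n,T_n}T_i^*=A_i^*D_{\omega_n,T_n}$; the lift is then the \emph{constant} multiplier $\tilde T_i=I_{A^2_{\omega_n}}\otimes A_i$, and one verifies (this is the computation you defer: sandwich $D_{\clw^-,A}(\br')$ between two copies of $D_{\omega_n,T_n}$ and invoke positivity of $\lim_{s\to 1}D_{\clw,T}(\br',s)$) that $(A_1,\dots,A_{n-1})$ is a pure $\clw^-$-hypercontraction on the coefficient space $\cld_{\omega_n,T_n}$, with purity coming from the intertwining relation as in \eqref{intertwining A}. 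The induction must then be applied to $(A_1,\dots,A_{n-1})$ on $\cld_{\omega_n,T_n}$, not to the lifted tuple on $\clg$; tensoring the resulting isometry with $I_{A^2_{\omega_n}}$ makes the $z_n$-compatibility automatic, which is how Theorem~\ref{model-gen-pure} assembles the chain of isometries. A telling symptom of the gap: apart from extracting the one-variable positivity of $T_n$ by letting $r_i\to 0$, your argument never uses the inequalities indexed by $S(\clw)$ at all, yet without them the lifting whose existence you assume cannot be manufactured.
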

This theorem is proved in Section~\ref{Sec4} as Theorem~\ref{model-gen-pure} by finding dilation of such an $n$-tuple of commuting contraction $T$. 
We also remove the pureness assumption and find dilations of $\clw$-hypercontractions (see Theorem~\ref{model-gen-tuple} for more details). 

In the case when $\clw=\clw_{\beta}$ for some $\beta\in \Z_+^n$ with $\beta\ge \mathbf e$, we show that $T$ is a pure $\clw_{\beta}$-hypercontractions if and only if $T$ is pure and $K_{\clw_{\beta}}^{-1}(T,T^*)\ge 0$. Therefore, for the particular choice of multi-weight sequence $\clw=\clw_{\beta}$, Theorem~\ref{main} recovers the classical result of Curto and Vasilescu \cite{CV, CVPoly} with a different proof. For the choice of multi-weight sequence $\clw_{\beta}$ when $\beta\in \mathbb R^n_+$ with $\beta\ge \mathbf e$, it also provides a natural generalization. Moreover, the class of multi-weight sequences is wide enough to include tensor product of reproducing kernel Hilbert spaces corresponding to certain Nevanlinna-Pick kernels over $\mathbb D$ (see Example ~\ref{Exam_1} below). 
Section~\ref{Sec3} is devoted to study multi-weight sequences and $\clw$-hypercontractions. In Section~\ref{Sec4}, we find dilations of pure $\clw$-hypercontractions and more generally for $\clw$-hypercontractions.

\newsection{Characteristic functions for $\omega$-Hypercontractions}\label{Sec2}
We construct characteristic functions of $\omega$-hypercontractions in this section. We recall the notion of  $\omega$-hypercontractions and their dilations first. Let $\omega=\{\omega_k\}_{k\ge 0}$ be a positive decreasing sequence such that $\omega_0=1$ and 
$\liminf_{k \to \infty} \omega_k^{\frac{1}{k}} = 1$.
Corresponding to the sequence $\omega$ and a Hilbert space $\cle$, we denote by $A^2_{\omega}(\cle)$ the $\cle$-valued weighted Bergman space; the Hilbert function space consists of $f\in\mathcal{O}(\D,\cle)$, the space of $\cle$-valued analytic functions $f$ on the open unit disc ($\D$), such that 
\[ f(z) = \sum_{k \geq 0} a_kz^k\ \text{ and } \|f\|^2_{\omega} := \sum_{k \geq 0} \|a_k\|^2 \omega_k < \infty \quad (a_k \in \cle,\,\, z\in \D).\]
It is also a reproducing kernel Hilbert space with the kernel $K_{\omega} : \D \times \D \to \clb(\cle)$ defined by  
\[ K_{\omega}(\zeta,\eta) = \sum_{k \geq 0} \frac{1}{\omega_k} 
(\zeta\bar{\eta})^k  I_{\cle} \quad \quad (\zeta,\eta \in \D).\]
If the co-efficient Hilbert space is $\mathbb C$, we simply write $A^2_{\omega}$ to denote $A^2_{\omega}(\mathbb C)$.
The multiplication operator $M_z$, known as shift operator, on $A^2_{\omega}(\cle)$ is defined by $(M_z f)(\eta)= \eta f(\eta)$ for all $\eta \in \D$. A straight forward computation shows that \[ \|M_z\|^2 = \text{sup}_k \frac{\omega_{k+1}}{\omega_k} .\] Thus an equivalent condition for the shift operator to be a contraction is that the sequence $\omega$ has to be a decreasing sequence. 
The kernel function $K_{\omega}$ has an associated analytic function $k_{\omega}$ on $\D$ defined by 
\[k_{\omega}(z)=\sum_{k \geq 0} \frac{1}{\omega_k} 
z^k.\] Properties of this associated analytic function has been very crucial in Olofsson's consideration. In fact, the class of weight sequences $\omega$ that he considered in \cite{O} are so that the associated analytic function $k_{\omega}$ on $\D$ possess some additional properties. The first natural property is that $k_{\omega}$ is non-vanishing. Then $\frac{1}{k_{\omega}}$ is also analytic on $\D$ and suppose that $\frac{1}{k_{\omega}}(z)= \sum_{k \geq 0} c_k z^k$. Other properties that $k_{\omega}$ needs to satisfy are as follows: 
\begin{enumerate} [(\textbf{P}1)]
\item The function $\frac{k_\omega}{k_{\omega,r}}$ has non negative Taylor coefficients for $0<r<1$, where $k_{\omega,r}(z)=k_\omega(rz)$.  
\item The quotients $\frac{k_{\omega,r}}{k_\omega}$ have uniformly bounded Taylor coefficients for $0<r<1$.
\item The Taylor coefficients of the reciprocal function $\frac{1}{k_{\omega}}$ is absolutely summable and the absolute sum of Taylor coefficients of $\frac{k_{\omega ,r}}{k_{\omega}}$
for $0 < r < 1$ form a uniformly bounded family.
\end{enumerate}
The first two properties are essential to obtain dilations of $\omega$-hypercontractions and we briefly indicate below the role played by these properties. The above discussion also prompt us to make the following definition. 
\begin{Definition}
A weight sequence is a positive decreasing sequence $\omega=\{\omega_n\}_{n\ge 0}$ such that $\omega_0=1$, $\liminf_{n \to \infty} \omega_n^{\frac{1}{n}} = 1$ and the corresponding analytic function $k_{\omega}$ is non-vanishing on $\D$ and satisfies (\textbf{P}1) and (\textbf{P}2) as above.
\end{Definition}
Natural examples of weight sequences are
the constant sequence $\omega_n = 1$ for all $n\ge 0$ and for a fixed $m\in \mathbb N$, $\omega_n = \frac{1}{\binom{n+m-1}{n}}$ for all $n \geq 0$. The constant sequence case corresponds to the Hardy space where as the later corresponds to the Bergman space defined above with kernel  
 $K_m(z,w) = \frac{1}{(1-z\bar{w})^m}$ ($z,w\in\D$).

{\em For the rest of this section we fix a weight sequence} $\omega$. We suppose that the reciprocal of the associated analytic function $k_{\omega}$ has the following power series expansion:
\begin{equation}\label{c_n}
\frac{1}{k_{\omega}}(z)= \sum_{n \geq 0} c_n z^n\quad (z\in\D).
\end{equation}

Now we recall the notion of $\omega$-hypercontraction introduced by Olofsson in \cite{O}.
\begin{Definition}\label{omega-hyp}
A bounded linear operator $T\in \clb(\clh)$ is said to be an $\omega$-hypercontraction if $T$ is a contraction and satisfies
\[ D_{\omega,T}(r):=\sum_{n\geq 0}r^n c_n T^n T^{*n} \geq 0\]
for all $r\in(0,1)$, where $c_n$'s are as in ~\eqref{c_n}. 
\end{Definition}
It can be shown that for the choice $\omega=\{\omega_n\}_{n\ge 0}$ where $\omega_n=\frac{1}{\binom{n+m-1}{n}}$ for all $n \geq 0$, $\omega$-hypercontractivity for a contraction $T$ is same as $m$-hypercontractivity in the sense of Agler~\cite{AG} (see \cite[Theorem 4.5]{O} for a proof). Thus the notion of $\omega$-hypercontractions is a natural generalization of $m$-hypercontractions. Moreover, it has been shown in ~\cite{O} that every $\omega$-hypercontraction is part of the shift operator $M_z$ on the weighted Bergman space $A^2_{\omega}(\cle)$ for some suitable Hilbert space $\cle$; we briefly recall this dilation result next. 

Let $T\in\mathcal{B}(\clh)$ be an $\omega$-hypercontraction. Then using the property (\textbf{P}1), it can be shown that  
the SOT limit of the operator $D_{\omega,T}(r)$ exists as $r\to 1$. We denote 
\[
D_{\omega,T}(1):=\textit{SOT}-\lim_{r\to1}D_{\omega,T}(r),
\]
and we define the defect operator and the defect space of $T$ as
\[
D_{\omega,T}:=\big (D_{\omega,T}(1) \big )^{1/2} \quad \text{and} \quad 
\cld_{\omega,T}:=\overline{\text{ran}}(D_{\omega,T}),
\] respectively. On the other hand, the property (\textbf{P}2) helps one to establish the identity 
\begin{equation}\label{omega-iso}
\|h\|^2 = \sum_{k \geq 0} \frac{1}{\omega_k}\|D_{\omega,T} T^{*k}h\|^2 + \lim_{k \to \infty} \|T^{*k}h\|^2 \quad \quad ( h\in \clh).
\end{equation}
Then it is evident from the above identity that the map $\pi_{\omega,T}: \clh \to A^2_{\omega}(\cld_{\omega,T})$ defined by 
\begin{equation}\label{pi_omega} \pi_{\omega,T}h(z) = 
\sum_{k \geq 0} \frac{1}{\omega_k}(D_{\omega,T} T^{*k}h) z^k \quad \quad (h\in \clh, z \in \D).
\end{equation} 
is a contraction and $\pi_{\omega,T}T^* = M_z^* \pi_{\omega,T}$. Moreover, setting  $Q_T^2:= \text{SOT}-\lim_{k \to \infty} T^k T^{*k}$ and $\clq_T:= \overline{\text{ran}}\, Q_T$, we have an isometry $\Pi_{\omega,T}: \clh \to A^2_{\omega}(\cld_{\omega,T}) \oplus \clq_T $ defined by 
\[(\Pi_{\omega,T}h)(z) = \Big((\pi_{\omega,T}h)(z), Q_Th \Big) \quad (h\in\clh, z\in\D) 
\]
and $\Pi_{\omega,T}T^* = (M_z \oplus U)^* \Pi_{\omega, T}$, where $U$ is a co-isometry on 
$\clq_T$ such that $U^*Q_Th=Q_T T^*h$ for all $h\in\clh$. 
Summarizing the above discussion we have the following dilation result.
\begin{Theorem}[c.f. ~\cite{O}]\label{olo}
Let $\omega=\{\omega_k\}_{k\geq0}$ be a weight sequence. 
If $T$ is an $\omega$-hypercontraction on $\clh$, then 
there exist an isometry $\Pi_{\omega,T}: \clh \to A^2_{\omega}(\cld_{\omega,T}) \oplus \clq_T$ and a co-isometry $U$ on $\clq_T$ such that 
\[
\Pi_{\omega,T}T^* = (M_z \oplus U)^* \Pi_{\omega, T}.
\]
In addition, if $T$ is pure 
then there exists an isometry $ \pi_{\omega, T}: \clh \to A^2_{\omega}(\cld_{\omega,T})$
such that
\[
\pi_{\omega,T}T^* = M_z^* \pi_{\omega, T}.
\]
\end{Theorem}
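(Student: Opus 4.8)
The plan is to reduce the theorem to two analytic facts about the defect operators $D_{\omega,T}(r)$ — the existence of their strong limit as $r\to1^-$, and the fundamental identity \eqref{omega-iso} — after which the maps $\pi_{\omega,T}$, $\Pi_{\omega,T}$ and the co-isometry $U$ are produced by direct verification, exactly as indicated in the discussion preceding the statement.

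First I would establish that $\text{SOT-}\lim_{r\to1^-}D_{\omega,T}(r)$ exists, with property (\textbf{P}1) as the engine. For $0<r<s<1$, writing $t=r/s$ and using $\frac{1}{k_{\omega,r}}=\frac{k_{\omega,s}}{k_{\omega,r}}\cdot\frac{1}{k_{\omega,s}}$ together with the fact that $\frac{k_{\omega,s}}{k_{\omega,r}}(z)=\frac{k_\omega}{k_{\omega,t}}(sz)$ has nonnegative (and absolutely summable) Taylor coefficients $g_p\ge0$ by (\textbf{P}1), a comparison of the Cauchy products gives the norm-convergent operator identity
\[
D_{\omega,T}(r)=\sum_{p\ge0}g_p\,T^pD_{\omega,T}(s)T^{*p}.
\]
Since $g_0=k_\omega(0)/k_\omega(0)=1$ and $g_p\ge0$, this exhibits $D_{\omega,T}(r)-D_{\omega,T}(s)$ as a positive operator; hence $r\mapsto D_{\omega,T}(r)$ is decreasing on $(0,1)$ and bounded below by $0$ (by $\omega$-hypercontractivity), so its strong limit $D_{\omega,T}(1)$ exists. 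This legitimizes $D_{\omega,T}=D_{\omega,T}(1)^{1/2}$ and $\cld_{\omega,T}$. Separately, since $T$ is a contraction, $\{T^kT^{*k}\}$ is a decreasing sequence of positive contractions, so $Q_T^2=\text{SOT-}\lim_kT^kT^{*k}$ exists, giving $Q_T$ and $\clq_T$.

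The heart of the proof — and the step I expect to be the main obstacle — is the identity \eqref{omega-iso}. I would start from the convolution relation $\sum_{k=0}^{n}\frac{1}{\omega_{n-k}}c_k=\delta_{n,0}$ coming from $k_\omega\cdot(1/k_\omega)=1$, and expand the finite sum $\Sigma_N:=\sum_{k=0}^{N-1}\frac{1}{\omega_k}T^kD_{\omega,T}(1)T^{*k}$ using the strongly convergent series $D_{\omega,T}(1)=\sum_jc_jT^jT^{*j}$. Collecting powers $T^nT^{*n}$, the coefficients telescope to $\delta_{n,0}$ for $n<N$, so that
\[
\sum_{k=0}^{N-1}\frac{1}{\omega_k}\|D_{\omega,T}T^{*k}h\|^2=\langle\Sigma_N h,h\rangle=\|h\|^2+\langle R_N h,h\rangle,
\]
where $R_N$ is an explicit tail operator whose coefficients are partial sums of the Taylor coefficients of $k_{\omega,r}/k_\omega$. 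Property (\textbf{P}2) is exactly what is needed here: the uniform boundedness of those coefficients controls the tail and justifies interchanging the limits in $N$ and $r$. The delicate point is that the tail weights $1/\omega_{N+k}$ may blow up as $N\to\infty$ while $T^{N+k}T^{*(N+k)}$ concentrates near $Q_T^2$; (\textbf{P}2) supplies the uniform estimate that lets a dominated-convergence argument identify $\lim_N\langle R_N h,h\rangle=-\lim_k\|T^{*k}h\|^2=-\|Q_Th\|^2$, which is \eqref{omega-iso}. I would carry this out first at a fixed radius $r<1$, where all series converge absolutely, and only afterwards pass to $r\to1^-$.

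Finally I would assemble the dilation. The identity \eqref{omega-iso} says precisely that $\Pi_{\omega,T}h=(\pi_{\omega,T}h,Q_Th)$ is norm-preserving, so $\Pi_{\omega,T}:\clh\to A^2_\omega(\cld_{\omega,T})\oplus\clq_T$ is an isometry (and in particular $\pi_{\omega,T}$ is a contraction). A direct computation with the weighted backward shift — using $(M_z^*f)_j=\frac{\omega_{j+1}}{\omega_j}a_{j+1}$ for $f=\sum a_kz^k$ — gives $(\pi_{\omega,T}T^*h)_j=\frac{1}{\omega_j}D_{\omega,T}T^{*(j+1)}h=(M_z^*\pi_{\omega,T}h)_j$, i.e. $\pi_{\omega,T}T^*=M_z^*\pi_{\omega,T}$. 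Defining $U^*$ on $\clq_T$ by $U^*Q_Th=Q_TT^*h$, the computation $\|Q_TT^*h\|^2=\lim_k\|T^{*(k+1)}h\|^2=\|Q_Th\|^2$ shows $U^*$ is a well-defined isometry, so $U$ is a co-isometry, and then $(M_z\oplus U)^*\Pi_{\omega,T}h=(\pi_{\omega,T}T^*h,Q_TT^*h)=\Pi_{\omega,T}T^*h$. For the pure case, $\text{SOT-}\lim_kT^{*k}=0$ forces $Q_T=0$, so \eqref{omega-iso} collapses to $\|h\|^2=\sum_k\frac1{\omega_k}\|D_{\omega,T}T^{*k}h\|^2=\|\pi_{\omega,T}h\|_\omega^2$, making $\pi_{\omega,T}$ itself the desired isometry intertwining $T^*$ and $M_z^*$.
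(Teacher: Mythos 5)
Your overall architecture is the same as the paper's (which, for this theorem, is Olofsson's, as the paper only summarizes it before citing \cite{O}): property (\textbf{P}1) yields the monotone SOT-convergence of $D_{\omega,T}(r)$ and hence the defect operator, property (\textbf{P}2) yields the identity \eqref{omega-iso}, and then $\pi_{\omega,T}$, $\Pi_{\omega,T}$ and the co-isometry $U$ are assembled by direct verification. Your first step (the comparison $D_{\omega,T}(r)=\sum_{p}g_p\,T^pD_{\omega,T}(s)T^{*p}$ with $g_p\ge 0$, $g_0=1$, giving that $r\mapsto D_{\omega,T}(r)$ is operator-decreasing) is correct and is exactly the one-variable case of the paper's Proposition \ref{Pro_Decr}; your final paragraph (the formula for $M_z^*$ on $A^2_\omega$, the intertwinings, the well-definedness of $U^*$ via $\|Q_TT^*h\|=\|Q_Th\|$, and the collapse to $\pi_{\omega,T}$ in the pure case) is also correct and routine once \eqref{omega-iso} is in hand.

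The genuine gap is in the middle step, which you yourself flag as the heart. Your expansion of $\Sigma_N=\sum_{k<N}\frac{1}{\omega_k}T^kD_{\omega,T}(1)T^{*k}$ rests on writing $D_{\omega,T}(1)=\sum_jc_jT^jT^{*j}$ as a strongly convergent series. That convergence is not available here: a weight sequence is only assumed to satisfy (\textbf{P}1) and (\textbf{P}2), and, as the paper stresses right after the theorem and in the ``moreover'' part of Proposition \ref{Pro_Decr}, convergence of $\sum_jc_jT^jT^{*j}$ at $r=1$ is precisely what (\textbf{P}3) would buy and is deliberately not assumed; without it, $D_{\omega,T}(1)$ exists only as $\text{SOT-}\lim_{r\to1}D_{\omega,T}(r)$. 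Your fallback (``carry this out at fixed $r<1$, then let $r\to1$'') does not rescue the computation as written: replacing $D_{\omega,T}(1)$ by $D_{\omega,T}(r)$, the coefficient of $T^nT^{*n}$ in $\Sigma_N(r)$ for $n<N$ equals the $n$-th Taylor coefficient of $k_\omega/k_{\omega,r}$ --- nonnegative by (\textbf{P}1), but not $\delta_{n,0}$ --- so the claimed telescoping $\Sigma_N(r)=I+R_N(r)$ fails. The identity that does hold exactly at fixed $r$ is $I=\sum_{k\ge0}\frac{r^k}{\omega_k}T^kD_{\omega,T}(r)T^{*k}$, with the $r$-dependent weights matched to $D_{\omega,T}(r)$, and from there one must run a genuine double limit in $N$ and $r$: the truncated part converges to $\sum_k\frac{1}{\omega_k}\|D_{\omega,T}T^{*k}h\|^2$, but the substantive point --- that the tail converges exactly to $\lim_k\|T^{*k}h\|^2$ rather than merely remaining bounded --- is where (\textbf{P}2) must be deployed with a concrete estimate. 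Your proposal asserts this via ``dominated convergence'' without supplying that estimate, so \eqref{omega-iso}, and with it the isometry property of $\Pi_{\omega,T}$, remains unproven; this is the actual technical content of Olofsson's argument that the proof needs.
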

In the above theorem, the co-isometry $U$ can be made to a unitary by taking a co-extension. We do not include it in the statement as we shall use the present form of the theorem in later section. The observant reader might have noticed that we have not used (\textbf{P}3) to obtain the above dilation. But (\textbf{P}3) provides a necessary and sufficient condition for a contraction $T \in \clb(\clh)$ to be an $\omega$-hypercontraction. To be more precise, a contraction $T$ is $\omega$-hypercontraction if and only if $D_{\omega,T}(1) \geq 0$.    
For more details we refer the reader to ~\cite[Theorem 6.2]{O}. 
For Hilbert spaces $\cle_1$ and $\cle_2$, an
operator-valued analytic map $\theta : \D \to \clb(\cle_1, \cle_2)$ is a multiplier from $H^2_{\cle_1}(\D)$ to  $A^2_{\omega}(\cle_2)$ 
if $\theta f \in A^2_{\omega}(\cle_2)$ for all $f \in H^2_{\cle_1}(\D)$. We denote by $\clm(H^2_{\cle_1}(\D) , A^2_{\omega}(\cle_2))$, the space of all multipliers from $H^2_{\cle_1}(\D)$ to $A^2_{\omega}(\cle_2)$.
We also use $M_{\theta}$, for 
$\theta \in \clm(H^2_{\cle_1}(\D), A^2_{\omega}(\cle_2))$, to denote the associated multiplication operator by $\theta$, that is, 
\[ M_{\theta}f = \theta f \quad \quad (f \in H^2_{\cle_1}(\D)) .\]
A multiplier $\theta \in \clm(H^2_{\cle_1}(\D), A^2_{\omega}(\cle_2))$ is said 
to be partially isometric if $M_{\theta}$ is a partially isometric operator from $H^2_{\cle_1}(\D)$ to $A^2_{\omega}(\cle_2)$. Such a partially isometric multiplier naturally occur in the Beurling-Lax-Halmos type characterization of invariant subspaces of vector-valued weighted Bergman spaces (see \cite{JS}).
The characterization relevant for us is the following.




\begin{Theorem}[c.f. \cite{JS}]\label{BLH type}
If $\cls$ is an $M_z$-invariant subspace of $A^2_{\omega}(\cle_*)$, then there exist a Hilbert space $\cle$ and a partially isometric multiplier 
$\theta \in \clm(H^2_{\cle}(\D) ,  A^2_{\omega}(\cle_*))$ such that 
\[ \cls =\theta H^2_{\cle}(\D). \]  
\end{Theorem}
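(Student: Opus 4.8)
The plan is to reduce the statement to the construction of a single shift-intertwining partial isometry and then to build that operator by a transfer-function (lurking-isometry) realization. First I would record the standard fact that a bounded operator $X : H^2_{\cle}(\D) \to A^2_{\omega}(\cle_*)$ satisfies $X M_z = M_z X$ if and only if $X = M_{\theta}$ for some $\theta \in \clm(H^2_{\cle}(\D), A^2_{\omega}(\cle_*))$: the symbol is recovered by letting $X$ act on the constant functions and on the reproducing kernels of $H^2_{\cle}(\D)$, and the intertwining relation forces the action of $X$ to be multiplication by an analytic $\clb(\cle,\cle_*)$-valued function $\theta$. Since the range of any multiplier is automatically $M_z$-invariant, the content of the theorem is the converse direction: given the $M_z$-invariant subspace $\cls$, produce a coefficient space $\cle$ and a partially isometric intertwiner $X = M_{\theta}$ with $\text{ran}\, X = \cls$.

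For the construction I would set $V = M_z|_{\cls}$, which is a contraction on $\cls$ because $\omega$ is decreasing and hence $\|M_z\|\le 1$. I then set up a $(\text{co})$isometric colligation with state space $\cls$, state operator $V^*$, and output map $C : \cls \to \cle_*$ given by evaluation at the origin, $Cf = f(0)$; the input space $\cle$ together with the maps $B$ and $D$ are chosen so that the block operator $\bigl(\begin{smallmatrix} V^* & B \\ C & D \end{smallmatrix}\bigr) : \cls \oplus \cle \to \cls \oplus \cle_*$ is $(\text{co})$isometric. Existence of such a completion amounts to matching the defect of the column $\bigl(\begin{smallmatrix} V^* \\ C \end{smallmatrix}\bigr)$, and $\cle$ is taken to be the corresponding defect space. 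With the transfer function $\theta(z) = D + zC(I - zV^*)^{-1}B$, the intertwining relation $M_{\theta}M_z = M_z M_{\theta}$ and the partial-isometry property of $M_{\theta}$ are read off from the colligation identities in the usual way.

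The two genuinely weighted points are where I expect the main obstacle to lie: I must verify (i) that $M_{\theta}$ maps $H^2_{\cle}(\D)$ \emph{boundedly} into the weighted space $A^2_{\omega}(\cle_*)$, not merely into a formal function space, and (ii) that its range is exactly $\cls$ rather than a proper subspace. Both hinge on the structure of the weight sequence: the contractive inclusion $H^2_{\cle}(\D) \subseteq A^2_{\omega}(\cle)$, valid since $\omega_k \le \omega_0 = 1$, is what singles out $H^2$ as the correct source space, while properties $(\textbf{P}1)$--$(\textbf{P}3)$ and the attendant absolute summability of the coefficients $c_n$ of $1/k_{\omega}$ from \eqref{c_n} are precisely what convert the algebraic colligation identities into a genuine norm identity computing $\|M_{\theta}f\|_{\omega}$, in the same spirit as the identity \eqref{omega-iso} used for the dilation of $\omega$-hypercontractions. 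Surjectivity onto $\cls$ I would then obtain by noting that $\text{ran}\, M_{\theta}$ is $M_z$-invariant and contains $\text{ran}\, C^*$, and by upgrading this containment to equality through a limiting argument whose convergence is again furnished by the weight conditions. The algebraic realization being routine, I regard this passage from the colligation to the weighted norm estimate as the crux of the proof; an alternative would be to invoke a McCullough--Trent type kernel factorization for the reproducing kernel of $\cls$, but the lurking-isometry route fits the defect-operator machinery already developed here more directly.
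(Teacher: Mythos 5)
Your opening reduction (a bounded $X:H^2_{\cle}(\D)\to A^2_{\omega}(\cle_*)$ with $XM_z=M_zX$ is necessarily a multiplication operator) is correct, and it is also the last step of the proof in the cited source \cite{JS}. The gap is in your construction of the intertwiner. The colligation you propose is the \emph{unweighted} one: output space $\cle_*$, output map $Cf=f(0)$, classical resolvent $(I-zV^*)^{-1}$. The transfer function $\theta(z)=D+zC(I-zV^*)^{-1}B$ of an isometric or coisometric colligation of that shape is a classical Schur-class function, so $M_\theta$ maps $H^2_{\cle}(\D)$ contractively into $H^2_{\cle_*}(\D)$ in the $H^2$ norm. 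Such a $\theta$ can never be a partially isometric multiplier onto a nonzero invariant subspace of a genuinely weighted space: if $M_\theta$, viewed as an operator into $A^2_{\omega}(\cle_*)$, were a partial isometry, then for $f\perp\ker M_\theta$ one gets $\|f\|_{H^2}=\|\theta f\|_{\omega}\le\|\theta f\|_{H^2}\le\|f\|_{H^2}$, hence $\|\theta f\|_{\omega}=\|\theta f\|_{H^2}$; writing $\theta f=\sum_k b_kz^k$ this reads $\sum_k(1-\omega_k)\|b_k\|^2=0$, so $b_k=0$ whenever $\omega_k<1$. Unless $\omega\equiv 1$ (the Hardy case, where your argument is the classical lurking-isometry proof of Beurling--Lax--Halmos), $\omega_k<1$ for all large $k$, so $\mathrm{ran}\,M_\theta$ would consist of polynomials of bounded degree -- and no nonzero $M_z$-invariant subspace does, since $f\in\cls$ forces $z^Nf\in\cls$ for every $N$. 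So no choice of $\cle$, $B$, $D$ can rescue the construction, and this is not a matter of estimates: (\textbf{P}1)--(\textbf{P}3) cannot repair it. (Your worry (i) is actually the harmless one, since $H^2_{\cle_*}(\D)\subseteq A^2_{\omega}(\cle_*)$ contractively.) A realization that does work must be the weighted one the paper itself uses for characteristic functions in Theorem~\ref{chfw}: output space of the form $\ell^2(\Z_+,\cld)$, the extra series $\sum_n\sqrt{\rho_n}D_nz^n$, and the weighted resolvent $K_{\omega}(z,V^*)=\sum_n\frac{1}{\omega_n}z^nV^{*n}$ in place of $(I-zV^*)^{-1}$; only then does one get the needed kernel identity with $K_{\omega}$ on the left, as in \eqref{key identity}, rather than the Szeg\H{o}-kernel identity that the classical colligation produces.

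For contrast, the proof behind the citation \cite{JS} avoids realizations entirely and needs none of (\textbf{P}1)--(\textbf{P}3). Set $V=M_z|_{\cls}$; it is a contraction (decreasing weights) and pure, because invariance gives $V^{*n}=P_{\cls}M_z^{*n}|_{\cls}$ and $M_z$ is pure on $A^2_{\omega}(\cle_*)$. Its Sz.-Nagy--Foias observability map $\pi_V:\cls\to H^2_{\cld_{V^*}}(\D)$, $\pi_Vf=\sum_n z^n(D_{V^*}V^{*n}f)$, is an isometry satisfying $\pi_VV^*=M_z^*\pi_V$. Composing the coisometry $\pi_V^*$ with the inclusion $\cls\subseteq A^2_{\omega}(\cle_*)$ yields a partial isometry $\Pi:H^2_{\cld_{V^*}}(\D)\to A^2_{\omega}(\cle_*)$ with $\mathrm{ran}\,\Pi=\cls$ and $M_z\Pi=\Pi M_z$, and your own first step identifies $\Pi=M_\theta$. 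Note the Hardy space enters as the source space because Sz.-Nagy--Foias dilations of pure contractions live there, not because of the inclusion $H^2\subseteq A^2_{\omega}$. The McCullough--Trent factorization you mention only in passing would also work (the kernel $(1-z\bar{w})\,\big(P_{\cls}K_{\omega}(\cdot,w)\big)(z)$ is positive precisely because $M_z|_{\cls}$ is a contraction), but the route you actually develop does not.
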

Now combining Theorem~\ref{olo} and Theorem~\ref{BLH type}, we can associate a partially isometric multiplier to a pure $\omega$-hypercontraction as follows.

\begin{Corollary}
Let $T\in \clb(\clh)$ be a pure $\omega$-hypercontraction. Then there exist a Hilbert space $\cle$ and a partially isometric multiplier 
$\theta \in \clm (H^2_{\cle}(\D),  A^2_{\omega}(\cld_{\omega, T}))$ such that 
\[ T\cong P_{\clq_{\theta}}M_z|_{\clq_{\theta}}, \]
where $\clq_{\theta}=\text{ran}\, \pi_{\omega, T}$, $\pi_{\omega, T}$ is the dilation map corresponding to $T$ as in Theorem~\ref{olo} and $\clq_{\theta}^\perp=\theta H^2_{\cle}(\D)$.
\end{Corollary}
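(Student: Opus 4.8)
The plan is to combine the two cited results directly. Given a pure $\omega$-hypercontraction $T\in\clb(\clh)$, the second part of Theorem~\ref{olo} supplies an isometry $\pi_{\omega,T}:\clh\to A^2_{\omega}(\cld_{\omega,T})$ satisfying the intertwining relation $\pi_{\omega,T}T^*=M_z^*\pi_{\omega,T}$. First I would set $\clq_\theta:=\operatorname{ran}\pi_{\omega,T}$ and verify that it is closed (so that it is a genuine subspace onto which we can project). Since $\pi_{\omega,T}$ is an isometry, its range is automatically closed, and $T\cong P_{\clq_\theta}M_z|_{\clq_\theta}$ will follow from the intertwining relation: the relation $\pi_{\omega,T}T^*=M_z^*\pi_{\omega,T}$ says precisely that $\clq_\theta$ is $M_z^*$-invariant and that $\pi_{\omega,T}$ unitarily identifies $T^*$ with $M_z^*|_{\clq_\theta}$, hence $T$ with the compression $P_{\clq_\theta}M_z|_{\clq_\theta}$.

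Next I would produce the multiplier. The subspace $\clq_\theta^\perp=A^2_{\omega}(\cld_{\omega,T})\ominus\clq_\theta$ is $M_z$-invariant, because $\clq_\theta$ is $M_z^*$-invariant. Applying Theorem~\ref{BLH type} with $\cle_*:=\cld_{\omega,T}$ to this $M_z$-invariant subspace $\clq_\theta^\perp$ yields a Hilbert space $\cle$ and a partially isometric multiplier $\theta\in\clm(H^2_{\cle}(\D),A^2_{\omega}(\cld_{\omega,T}))$ with $\clq_\theta^\perp=\theta H^2_{\cle}(\D)$. This is exactly the data asserted in the corollary.

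The only genuine point requiring care is confirming that $\clq_\theta^\perp$ is $M_z$-invariant in the precise sense needed by Theorem~\ref{BLH type}; this is a short orthogonality argument from the $M_z^*$-invariance of $\clq_\theta=\operatorname{ran}\pi_{\omega,T}$, which in turn is immediate from $M_z^*\pi_{\omega,T}=\pi_{\omega,T}T^*$. Everything else is a direct invocation of the two theorems, so I do not expect any serious obstacle; the corollary is essentially a bookkeeping combination of Theorem~\ref{olo} and Theorem~\ref{BLH type}. If one wished to be completely explicit, one could also record that $\pi_{\omega,T}$ being isometric (rather than merely contractive) is exactly what guarantees $\clq_\theta$ is closed and that the identification $T\cong P_{\clq_\theta}M_z|_{\clq_\theta}$ is unitary rather than merely a similarity.
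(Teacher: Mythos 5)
Your proposal is correct and follows exactly the route the paper intends: the paper presents this corollary as a direct combination of Theorem~\ref{olo} (the isometric dilation $\pi_{\omega,T}$ for pure $\omega$-hypercontractions, giving the closed $M_z^*$-invariant subspace $\clq_\theta=\text{ran}\,\pi_{\omega,T}$ and the identification $T\cong P_{\clq_\theta}M_z|_{\clq_\theta}$) with Theorem~\ref{BLH type} applied to the $M_z$-invariant subspace $\clq_\theta^\perp$. Your additional care about closedness of the range and $M_z$-invariance of the complement is exactly the bookkeeping the paper leaves implicit, so there is nothing to add.
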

In the rest of this section, we compute such a partially isometric multiplier explicitly corresponding to each pure $\omega$-hypercontraction.
The construction is based on a recently developed technique found in ~\cite{BDS} in the context of $m$-hypercontractions. 
Let $T\in \clb(\clh)$ be a pure $\omega$-hypercontraction. Since $T$ is a contraction, we define $D_T:=(I-TT^*)^{1/2}$. It follows from Lemma 3.6 in \cite{O} that for $h\in \clh$,
\begin{equation}\label{Ol1}
\|D_Th\|^2=\|D_{\omega,T}h\|^2+\sum_{k\geq 1}(\frac{1}{\omega_k}-\frac{1}{\omega_{k-1}})\|D_{\omega,T}T^{*k}h\|^2.
\end{equation}
Also, since $T$ is pure it follows from ~\eqref{omega-iso} that 
\begin{equation}\label{O2}
\|h\|^2=\sum_{n\geq 0}\frac{1}{\omega_n}\|D_{\omega,T}T^{*n}h\|^2.
\end{equation} 
Consider the map $C_{\omega,T}:\clh \to l^2(\mathbb{Z}_+,\cld_{\omega,T})$ defined by 
 \[
 C_{\omega,T}(h)=\{\sqrt{\rho_n}D_{\omega,T}T^{* n}h\}_{n \geq 0}, \quad  (h\in\clh)
 \]
 where $\rho_0=1$ and $\rho_n=\frac{1}{\omega_n}-\frac{1}{\omega_{n-1}} \geq 0$ for all $n\geq 1$.
Then by the identity (\ref{Ol1}), for $h\in \clh$, 
\begin{align*}
\|C_{\omega,T}h\|^2&=\sum_{n\geq 0}\rho_n\|D_{\omega,T}T^{*n}h\|^2\\
&=\|D_{\omega,T}h\|^2+\sum_{n\geq 1}(\frac{1}{\omega_n}-\frac{1}{\omega_{n-1}})\|D_{\omega,T}T^{*n}h\|^2\\
&=\|D_Th\|^2\leq \|h\|^2.
\end{align*}
Thus $C_{\omega,T}$ is a contraction. 
Now by identity (\ref{O2}),
\[I_\clh=\pi^*_{\omega,T} \pi_{\omega,T}=\sum_{n\geq 0}\frac{1}{\omega_n}T^nD^2_{\omega,T}T^{*n}, \]
and consequently, 
\begin{align*}
I_\clh -C^*_{\omega,T} C_{\omega,T}&=\sum_{n\geq 0}\frac{1}{\omega_n}T^nD^2_{\omega,T}T^{*n}-\sum_{n\geq 0}\rho_n T^nD^2_{\omega,T}T^{*n}\\
&=\sum_{n\geq 0}(\frac{1}{\omega_n}-\rho_n)T^nD^2_{\omega,T}T^{*n}\\
&=\sum_{n\geq 1}\frac{1}{\omega_{n-1}}T^nD^2_{\omega,T}T^{*n}\\
&=\sum_{n\geq 0}\frac{1}{\omega_n}T^{n+1}D^2_{\omega,T}T^{* n+1}\\
&=TT^*.
\end{align*}
This shows that the map 
$
X_T=\begin{bmatrix}
T^*\\ C_{\omega,T}\end{bmatrix}:\clh \to \clh \oplus l^2(\mathbb{Z}_+,\cld_{\omega,T})
$ defined by 
\[
X_T(h)=(T^*h, C_{\omega, T}h)\quad (h\in\clh)
\]
is isometry. By adding a Hilbert space $\cle$, if necessary, we get a unitary operator 
\[
U:=\begin{bmatrix} X_T & Y_T\end{bmatrix}:\clh \oplus \cle \to \clh \oplus l^2(\mathbb{Z}_+,\cld_{\omega,T}),
 \]
where $Y_T=U|_{\cle}:\cle \to \clh \oplus l^2(\mathbb{Z}_+,\cld_{\omega,T})$ is a contraction.
By setting $Y_T=\begin{bmatrix} B \\ D \end{bmatrix},$ where $B=P_\clh Y_T \in \clb(\cle, \clh)$ and $D=P_{l^2(\mathbb{Z}_+,\cld_{\omega,T})}Y_T \in \clb(\cle,l^2(\mathbb{Z}_+,\cld_{\omega,T}))$, we have the following result which will lead us to construct the characteristic function. 
\begin{Theorem} 
Let 
$T\in \clb(\clh)$ be a pure $\omega$-hypercontraction. Then the map $C_{\omega,T}:\clh \to l^2(\mathbb{Z}_+,\cld_{\omega,T})$ defined by 
 \[
 C_{\omega,T}(h)=\{\sqrt{\rho_n}D_{\omega,T}T^{* n}h\}_{n \geq 0}, \quad \text{where}\,\,\,\, \rho_0=1 \quad \text{and} \quad \rho_n=\frac{1}{\omega_n}-\frac{1}{\omega_{n-1}} \quad \text{for all}\,\, n\geq 1,
 \] is a contraction and there exist a Hilbert space $\cle$ and bounded operators $B\in \clb(\cle, \clh)$ and $D =\{D_n\}_{n=0}^{\infty} \in \clb(\cle,l^2(\mathbb{Z}_+,\cld_{\omega,T}))$ where each 
 $D_n \in \clb(\cle, \cld_{\omega,T}) $ such that  
 \[
\begin{bmatrix}T^* &B\\ C_{\omega,T}& D\end{bmatrix} : \clh \oplus \cle \raro
\clh \oplus l^2(\mathbb{Z}_+,\cld_{\omega,T}) 
\] 
 is unitary.
\end{Theorem}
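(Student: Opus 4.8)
The plan is to combine into a single unitary the three facts already assembled in the discussion preceding the statement; for brevity I write $C := C_{\omega,T}$ and $\cld := \cld_{\omega,T}$. First I would establish that $C$ is a contraction. Expanding $\|Ch\|^2 = \sum_{n \geq 0} \rho_n \|D_{\omega,T} T^{*n} h\|^2$ and substituting $\rho_0 = 1$, $\rho_n = \frac{1}{\omega_n} - \frac{1}{\omega_{n-1}}$, Olofsson's identity \eqref{Ol1} collapses this sum exactly to $\|D_T h\|^2 = \|(I - TT^*)^{1/2} h\|^2 \leq \|h\|^2$. Since the resulting identity holds for every $h \in \clh$, this simultaneously gives $\|C\| \leq 1$ and the operator identity $C^* C = I - TT^*$.

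Next I would identify the column $X_T = \begin{bmatrix} T^* \\ C \end{bmatrix} : \clh \to \clh \oplus l^2(\mathbb{Z}_+, \cld)$ as an isometry by computing $X_T^* X_T = T T^* + C^* C = TT^* + (I - TT^*) = I$. Equivalently, the identity $C^* C = I - TT^*$ can be read off from the telescoping computation: purity of $T$ kills the boundary term in \eqref{omega-iso}, so by \eqref{O2} one has $\pi_{\omega,T}^* \pi_{\omega,T} = \sum_{n \geq 0} \frac{1}{\omega_n} T^n D_{\omega,T}^2 T^{*n} = I$, and subtracting $C^* C = \sum_{n \geq 0} \rho_n T^n D_{\omega,T}^2 T^{*n}$ annihilates the $n=0$ term (since $\frac{1}{\omega_0} - \rho_0 = 0$) and, after reindexing $n \mapsto n+1$, leaves $T\bigl(\pi_{\omega,T}^* \pi_{\omega,T}\bigr) T^* = TT^*$.

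With $X_T$ an isometry, the final step is the standard completion to a unitary. Its range is a closed subspace of $\clh \oplus l^2(\mathbb{Z}_+, \cld)$, so I would set $\cle := (\text{ran}\, X_T)^\perp$ and let $Y_T : \cle \hookrightarrow \clh \oplus l^2(\mathbb{Z}_+, \cld)$ be the inclusion, whence $U := \begin{bmatrix} X_T & Y_T \end{bmatrix} : \clh \oplus \cle \to \clh \oplus l^2(\mathbb{Z}_+, \cld)$ is unitary. Writing $Y_T = \begin{bmatrix} B \\ D \end{bmatrix}$ with $B = P_\clh Y_T \in \clb(\cle, \clh)$ and $D = P_{l^2(\mathbb{Z}_+, \cld)} Y_T$, and decomposing $D = \{D_n\}_{n \geq 0}$ into its coordinate maps $D_n \in \clb(\cle, \cld)$, produces precisely the asserted $2 \times 2$ block unitary.

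The only genuinely substantive point is the operator identity $C^* C = I - TT^*$ underlying Steps 1 and 2; everything afterward is the routine extension of an isometry to a unitary, with the cokernel supplying $\cle$. The care required there is in justifying the strong-operator convergence of the defining series and the legitimacy of the telescoping reindexing — both of which rest on the purity hypothesis, which discards $\lim_k \|T^{*k} h\|^2$, and on property (\textbf{P}2) that underlies \eqref{omega-iso}.
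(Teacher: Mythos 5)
Your proposal is correct and follows essentially the same route as the paper: contractivity of $C_{\omega,T}$ via the identity \eqref{Ol1}, the operator identity $C_{\omega,T}^*C_{\omega,T}=I-TT^*$ (for which you reproduce the paper's own telescoping computation from \eqref{O2}), the resulting isometry $X_T=\begin{bmatrix}T^*\\ C_{\omega,T}\end{bmatrix}$, and the standard completion to a unitary by adjoining $\cle=(\mathrm{ran}\,X_T)^{\perp}$. Your one streamlining --- reading $C_{\omega,T}^*C_{\omega,T}=I-TT^*$ directly off the quadratic-form identity $\|C_{\omega,T}h\|^2=\|D_Th\|^2$ by polarization, which does not even need purity --- is sound, but it does not change the structure of the argument.
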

The fact that each triple $(\cle, B, D)$ -- which appears in the above theorem -- gives rise to a characteristic function of $T$, motivates us to make the following definition. 
\begin{Definition}
A triple $(\cle, B, D)$ consisting of a Hilbert space $\cle$ and bounded linear operators $B\in \clb(\cle, \clh)$ and $D\in \clb(\cle,l^2(\mathbb{Z}_+,\cld_{\omega,T}))$ is said to be a characteristic triple of a pure $\omega$-hypercontraction $T$ on $\clb(\clh)$ if
\[
\begin{bmatrix}T^* &B\\ C_{\omega,T}& D\end{bmatrix} : \clh \oplus \cle \raro
\clh \oplus l^2(\mathbb{Z}_+,\cld_{\omega,T}) 
\] 
 is unitary.
\end{Definition}
It turns out that characteristic triple is unique in the following sense.
\begin{Theorem} \label{ch-triple-unique}
If $(\cle_1,B_1,D_1)$ and $(\cle_2,B_2,D_2)$ are two characteristic triple of a pure $\omega$-hypercontraction $T \in \clb(\clh)$, 
then there exists a unitary $U: \cle_2 \to \cle_1$ such that 
\[ (\cle_2,B_2,D_2) = (U^* \cle_1, B_1U, D_1U). \]
\end{Theorem}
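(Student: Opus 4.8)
The plan is to exploit the defining unitarity of characteristic triples together with the fact that the first column of each unitary,
\[
X_T=\begin{bmatrix}T^*\\ C_{\omega,T}\end{bmatrix}:\clh\raro \clh\oplus l^2(\mathbb Z_+,\cld_{\omega,T}),
\]
is an isometry that depends only on $T$, not on the chosen triple. Given two characteristic triples $(\cle_1,B_1,D_1)$ and $(\cle_2,B_2,D_2)$, I have two unitaries
\[
U_i=\begin{bmatrix} X_T & Y_i\end{bmatrix}:\clh\oplus\cle_i\raro \clh\oplus l^2(\mathbb Z_+,\cld_{\omega,T}),
\]
where $Y_i=\begin{bmatrix}B_i\\ D_i\end{bmatrix}$, with the \emph{same} first block $X_T$. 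The key structural observation is that $\operatorname{ran} Y_i$ must equal the orthogonal complement of $\operatorname{ran} X_T$ in $\clh\oplus l^2(\mathbb Z_+,\cld_{\omega,T})$: since $U_i$ is unitary its range is the whole target space, and since its columns $X_T$ and $Y_i$ have orthogonal ranges (an easy consequence of $U_i^*U_i=I$, which forces $X_T^*Y_i=0$ and $Y_i$ isometric), we get $\operatorname{ran} Y_i=(\operatorname{ran} X_T)^\perp=:\clm$, a subspace determined by $T$ alone.

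First I would record that each $Y_i:\cle_i\to\clm$ is a unitary onto $\clm$. Indeed, $U_i^*U_i=I$ gives $Y_i^*Y_i=I_{\cle_i}$ (so $Y_i$ is isometric) and $X_T^*Y_i=0$ (so $\operatorname{ran} Y_i\subseteq\clm$); combined with the surjectivity of $U_i$ just noted, $Y_i$ maps $\cle_i$ isometrically \emph{onto} $\clm$. Then I define
\[
U:=Y_1^{-1}Y_2:\cle_2\raro\cle_1,
\]
where $Y_1^{-1}:\clm\to\cle_1$ is the inverse of the unitary $Y_1:\cle_1\to\clm$. As a composition of two unitaries, $U$ is unitary. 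It remains to verify $B_2=B_1 U$ and $D_2=D_1 U$, i.e. $Y_2=Y_1U$, which is immediate from the definition $U=Y_1^{-1}Y_2$ after applying $Y_1$ on the left; reading off the two block components then gives the two identities, and rewriting $U=Y_1^{-1}Y_2$ as $\cle_2=U^*\cle_1$ in the suggestive notation of the statement yields $(\cle_2,B_2,D_2)=(U^*\cle_1,B_1U,D_1U)$.

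The main obstacle, though really a bookkeeping point rather than a genuine difficulty, is establishing cleanly that the two range conditions $\operatorname{ran} Y_i=\clm$ hold and that $Y_i$ is \emph{onto} $\clm$ rather than merely into it; this is where unitarity (not just isometry) of the full $U_i$ is essential, and one must use both $U_i^*U_i=I$ and $U_iU_i^*=I$. A small amount of care is also needed to interpret the statement's notation $\cle_2=U^*\cle_1$: it simply encodes that the unitary $U:\cle_2\to\cle_1$ intertwines the two triples via $Y_2=Y_1U$, and one should state this explicitly so that the claimed equality of triples is unambiguous. No property of $\omega$ beyond what has already been used to build $X_T$ and the unitaries is required, so the argument is purely a uniqueness-of-unitary-completion statement.
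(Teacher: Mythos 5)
Your proposal is correct and is essentially the paper's own argument: the paper's proof consists precisely of the observation that $\begin{bmatrix} B_1 \\ D_1 \end{bmatrix}$ and $\begin{bmatrix} B_2 \\ D_2 \end{bmatrix}$ are isometries with the same range, from which the intertwining unitary $U$ is obtained exactly as you construct it. Your write-up simply fills in the details the paper leaves implicit, namely that both ranges equal $(\operatorname{ran} X_T)^{\perp}$, which requires the full unitarity of the block matrices.
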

\textit{Proof.} The proof follows from the observation that 
$ \begin{bmatrix}
B_1 \cr D_1
\end{bmatrix}$ and $ \begin{bmatrix}
B_2 \cr D_2
\end{bmatrix}$ are isometries and their range is same. \qed

We are now in a position to state the main result of this section which provides an explicit method to construct characteristic functions. The proof of the theorem is similar to Theorem 3.1 in \cite{BDS} and we only include a sketch of the proof here. 

\begin{Theorem}\label{chfw}
Let $T$ be a pure $\omega$-hypercontraction on $\clh$, and let $(\cle, B, D)$ be a characteristic triple of $T$. Then
\[
\theta_T(z)=\sum_{n\geq 0}\sqrt{\rho_n} D_n z^n + z D_{\omega,T} \sum_{n\geq 0}\frac{1}{\omega_n}z^n T^{* n }B \]
is a partially isometric multiplier in 
$ \clm(H^2_{\cle}(\D),A^2_\omega(\cld_{\omega,T}))$
such that 
\[
\clq_T^\perp=\theta_T H^2_{\cle}(\D)\ \text{ and }
T\cong P_{\clq_T}M_z|_{\clq_T},
\]
where 
 $\rho_0=1, \,\,  
\rho_n=\frac{1}{\omega_n}-\frac{1}{\omega_{n-1}}$ for all $n\geq 1$.
\end{Theorem}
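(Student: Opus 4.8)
The plan is to reduce the entire statement to a single reproducing-kernel identity and then read off all three conclusions from it. First I would record what is already at hand. Since $T$ is pure, the map $\pi_{\omega,T}\colon\clh\to A^2_\omega(\cld_{\omega,T})$ of \eqref{pi_omega} is an \emph{isometry} by \eqref{O2}, it intertwines $\pi_{\omega,T}T^*=M_z^*\pi_{\omega,T}$, and $\clq_T:=\text{ran}\,\pi_{\omega,T}$ is closed; hence $\pi_{\omega,T}\pi_{\omega,T}^*$ is the orthogonal projection $P_{\clq_T}$ and the model $T\cong P_{\clq_T}M_z|_{\clq_T}$ is immediate from Theorem~\ref{olo}. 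Thus the real content is that $\theta_T$ is a contractive (indeed partially isometric) multiplier with $\text{ran}\,M_{\theta_T}=\clq_T^{\perp}$. Everything will follow once I establish, as operators on $\cld_{\omega,T}$ and for all $z,w\in\D$, the identity
\[
k_\omega(z\bar w)\,I_{\cld_{\omega,T}}-\frac{\theta_T(z)\theta_T(w)^*}{1-z\bar w}=G(z,w),
\]
where $G(z,w)$ is the reproducing kernel of the subspace $\clq_T\subseteq A^2_\omega(\cld_{\omega,T})$. A short computation gives $\pi_{\omega,T}^*\big(K_\omega(\cdot,w)\xi\big)=\sum_{n\ge0}\tfrac{\bar w^{\,n}}{\omega_n}T^nD_{\omega,T}\xi$, so that $G(z,w)=\langle \pi_{\omega,T}^*K_\omega(\cdot,w)\,\cdot\,,\,\pi_{\omega,T}^*K_\omega(\cdot,z)\,\cdot\,\rangle$ is manifestly a positive semidefinite kernel.

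The algebraic engine is the unitarity of the characteristic-triple operator $U=\begin{bmatrix}T^*&B\\ C_{\omega,T}&D\end{bmatrix}$. Writing out $U^*U=I$ yields $TT^*+C_{\omega,T}^*C_{\omega,T}=I_\clh$, $\;TB+C_{\omega,T}^*D=0$ and $B^*B+D^*D=I_\cle$, while $UU^*=I$ yields $T^*T+BB^*=I_\clh$, $\;C_{\omega,T}T+DB^*=0$ and $C_{\omega,T}C_{\omega,T}^*+DD^*=I$. These relations, together with the two Olofsson identities \eqref{Ol1} and \eqref{O2} and the defect relation $I_\clh-C_{\omega,T}^*C_{\omega,T}=TT^*$ recorded before the statement, are exactly the ingredients I will feed into the kernel computation.

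To prove the displayed identity I would first observe that, once its left-hand side is known to be a positive semidefinite kernel, the standard multiplier criterion for the pair $\big(H^2_{\cle}(\D),A^2_\omega(\cld_{\omega,T})\big)$ gives at once that $\theta_T$ is a contractive multiplier; $M_{\theta_T}$ is then bounded, its adjoint acts on kernels by $M_{\theta_T}^*\big(K_\omega(\cdot,w)\xi\big)=\tfrac{1}{1-\bar w z}\theta_T(w)^*\xi$, and consequently $\langle M_{\theta_T}M_{\theta_T}^*K_\omega(\cdot,w)\xi,K_\omega(\cdot,z)\zeta\rangle=\tfrac{1}{1-z\bar w}\langle\theta_T(z)\theta_T(w)^*\xi,\zeta\rangle$. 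Hence the displayed identity is precisely the operator identity $I-M_{\theta_T}M_{\theta_T}^*=\pi_{\omega,T}\pi_{\omega,T}^*$ tested against the (total) family of kernel functions. The heart of the matter is therefore the verification of the identity itself: I would substitute the defining formula for $\theta_T$, expand $\theta_T(z)\theta_T(w)^*$ into the four products arising from its two summands $\sum_{n}\sqrt{\rho_n}D_n z^n$ and $zD_{\omega,T}\sum_{n}\tfrac1{\omega_n}z^nT^{*n}B$, and collapse each block using the colligation relations above, the telescoping relation $\omega_n\rho_n=1-\omega_n/\omega_{n-1}$, and \eqref{Ol1}--\eqref{O2}. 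This bookkeeping---matching powers of $z$ and $\bar w$ and resumming the resulting series against $k_\omega(z\bar w)$ and the geometric factor $(1-z\bar w)^{-1}$---is the \emph{main obstacle}; it is the weighted analogue of the de Branges--Rovnyak identity underlying the Sz.-Nagy--Foias characteristic function, and it is where purity, the defect relation, and the unitarity of $U$ are all consumed.

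Finally I would read off the three conclusions. Positivity of the left-hand side of the identity gives $\theta_T\in\clm\big(H^2_{\cle}(\D),A^2_\omega(\cld_{\omega,T})\big)$. The resulting operator identity $M_{\theta_T}M_{\theta_T}^*=I-\pi_{\omega,T}\pi_{\omega,T}^*=P_{\clq_T^{\perp}}$ shows $M_{\theta_T}M_{\theta_T}^*$ is a projection, so $M_{\theta_T}$ is a partial isometry and $\text{ran}\,M_{\theta_T}=\clq_T^{\perp}$, that is, $\clq_T^{\perp}=\theta_TH^2_{\cle}(\D)$. Combined with the model $T\cong P_{\clq_T}M_z|_{\clq_T}$ noted at the outset, this is exactly the assertion of the theorem. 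As a consistency check I would specialize to $\omega_n\equiv1$: then $\rho_n=0$ for $n\ge1$, $l^2(\Z_+,\cld_{\omega,T})$ collapses to $\cld_{T^*}$, $U$ becomes the classical Sz.-Nagy--Foias unitary colligation, and $\theta_T$ reduces to the usual characteristic function, which reassures me that the weighted computation of the third paragraph is set up correctly.
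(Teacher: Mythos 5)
Your proposal follows essentially the same route as the paper: both reduce everything to the kernel identity $K_\omega(z,w)I_{\cld_{\omega,T}}-\frac{\theta_T(z)\theta_T(w)^*}{1-z\bar w}=D_{\omega,T}K_\omega(z,T^*)K_\omega(\bar w,T)D_{\omega,T}$, established from the unitarity of the colligation $\begin{bmatrix}T^*&B\\ C_{\omega,T}&D\end{bmatrix}$ together with the telescoping relation $(1-zT^*)K_\omega(z,T^*)=\sum_n\rho_nz^nT^{*n}$, and then read off the contractive-multiplier property, the identity $M_{\theta_T}M_{\theta_T}^*=I-\pi_{\omega,T}\pi_{\omega,T}^*=P_{\clq_T^\perp}$, and hence the partial isometry and $\clq_T^\perp=\theta_TH^2_\cle(\D)$ by testing against kernel functions. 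The level of detail also matches the paper, which likewise leaves the central algebraic expansion of $\theta_T(z)\theta_T(w)^*$ as a ``direct calculation.''
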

{\em Sketch of the proof.}
For a contraction $A$ and $z\in\D$, we set 
$K_{\omega}(z,A):= \sum_{n\geq 0}\frac{1}{\omega_n}z^n A^{ n }$, where the series converges as $zA$ is a strict contraction. 
Now, note that 
\[ (1 - zT^*) K_{\omega}(z,T^*) = \frac{1}{\omega_0} + \sum_{n \geq 1} (\frac{1}{\omega_n} - \frac{1}{\omega_{n-1}}) z^n T^{*n} = \sum_{n\geq 0} {\rho_n}z^n T^{* n }.\]
A direct calculation using the above identity and the unitary property of  $\begin{bmatrix}
T^* & B \cr C_{\omega,T}  &  D 
\end{bmatrix}$, 
it can be shown that 
\begin{equation}\label{key identity}
K_{\omega}(\eta,\zeta) I_{\cld_{\omega,T}} - \frac{\theta_T(\eta)\theta_T(\zeta)^*}{1 - \eta \bar{\zeta}} = D_{\omega,T}K_{\omega}(\eta,T^*)K_{\omega}(\bar{\zeta},T)D_{\omega,T}\quad (\zeta,\eta\in\D).
\end{equation}
Then using some standard arguments in the theory of reproducing kernel Hilbert spaces, we conclude that \[ \theta_T \in \clm(H^2_{\cle}(\D), A^2_{\omega}(\cld_{\omega,T})) \] and
\[ M_{\theta_T}^*K_{\omega}(.,\zeta)h = K_1(.,\zeta)\theta_T(\zeta)^*h \quad  (\zeta \in \D, h \in \cld_{\omega,T}), \] where $K_1(\eta,\zeta) = (1 - \eta \bar{\zeta})^{-1} $. 
Consequently, 
\[ (I - M_{\theta_T}M_{\theta_T}^*) K_{\omega}(.,\zeta)h = (K_{\omega}(.,\zeta)I_{\cld_{\omega,T}} - K_1(.,\zeta)\theta_T(.)\theta_T(\zeta)^*)h, \] 
and therefore by ~\eqref{key identity},
\[ (I - M_{\theta_T}M_{\theta_T}^*) K_{\omega}(.,\zeta)h = D_{\omega,T}K_{\omega}(.,T^*)K_{\omega}(\bar{\zeta}, T)D_{\omega,T}h \]
for all $\zeta \in \D$ and $h \in \cld_{\omega,T}$. On the other hand, the adjoint of the dilation map for the pure $\omega$-hyeprercontraction $\pi_{\omega,T}^*: A^2_{\omega}(\cld_{\omega,T}) \to \clh$ is given by 
\[ \pi_{\omega,T}^* K_{\omega}(.,\zeta)h = K_{\omega}(\bar{\zeta}, T) D_{\omega,T}h \quad \quad (\zeta \in \D, h \in \cld_{\omega,T}). \] 
Then it is easy to see that
\[ \pi_{\omega,T}\pi_{\omega,T}^* K_{\omega}(.,\zeta)h = D_{\omega,T}K_{\omega}(.,T^*)K_{\omega}(\bar{\zeta}, T)D_{\omega,T}h, \] for all $\zeta \in \D$ and $h \in \cld_{\omega,T}$. Combining all these we have the required identity \[ \pi_{\omega,T}\pi_{\omega,T}^* + M_{\theta_T} M_{\theta_T}^* = I_{A^2_{\omega}(\cld_{\omega,T})}. \] That is, $\theta_T$ is a partially isometric multiplier such that $\clq_T^{\perp} = \theta_T H^2_{\cle}(\D)$. \quad \qed \\

We call the partially isometric multiplier $\theta_T$ obtained in the above theorem corresponding to a   characteristic triple $(\cle, B,D)$ as \textit{characteristic function} of $T$. 
Let $T_1$ and $T_2$ be two pure $\omega$-hypercontractions 
on $\clh_1$ and $\clh_2$, respectively. Also, let $\theta_{T_1}$ and $\theta_{T_2}$ be characteristic functions corresponding to the characteristic triples $(\cle_1,B_1,D_1)$ and $(\cle_2,B_2,D_2)$ of $T_1$ and $T_2$, respectively. Then the characteristic functions $\theta_{T_1}$ and $\theta_{T_2}$ is said to be \textit{coincide} 
if there exists two unitaries $\tau: \cle_2 \to \cle_1$ and $\tau_*: \cld_{\omega,T_1^*} \to \cld_{\omega,T_2^*}$ such that 
\[ \theta_{T_2}(z) = \tau_* \theta_{T_1}(z) \tau \quad \quad (z \in \D). \]
We end the section with the observation that characteristic function is completely unitary invariant for pure $\omega$-hypercontractions. We omit the proof as it is exactly same as Theorem 3.2 in \cite{BDS}.
\begin{Theorem}
Let $T_1$ and $T_2$ be two pure $\omega$-hypercontractions
on $\clh_1$ and $\clh_2$, respectively. Then $T_1$ and $T_2$ are unitary equivalent if and only if $\theta_{T_1}$ and $\theta_{T_2}$ coincide.
\end{Theorem}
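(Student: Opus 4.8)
The plan is to prove the two implications separately, transporting the entire characteristic-triple construction along a unitary in the forward direction, and transporting model subspaces along an induced unitary in the converse. For the forward direction, suppose $V\colon \clh_1\raro\clh_2$ is unitary with $T_2=VT_1V^*$. First I would note that $V$ intertwines the hereditary functional calculus: from $T_2^{n}T_2^{*n}=VT_1^{n}T_1^{*n}V^*$ one gets $D_{\omega,T_2}(r)=VD_{\omega,T_1}(r)V^*$ for each $r$, hence $D_{\omega,T_2}(1)=VD_{\omega,T_1}(1)V^*$, and by the continuous functional calculus and unitarity of $V$ also $D_{\omega,T_2}=VD_{\omega,T_1}V^*$. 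Thus $V$ restricts to a unitary $\tau_*\colon\cld_{\omega,T_1}\raro\cld_{\omega,T_2}$ with $\tau_*D_{\omega,T_1}=D_{\omega,T_2}V$ on $\clh_1$. Using $T_2^{*n}V=VT_1^{*n}$, a componentwise computation gives $C_{\omega,T_2}V=\widetilde{\tau_*}\,C_{\omega,T_1}$, where $\widetilde{\tau_*}=\bigoplus_n\tau_*$ is the induced unitary $l^2(\Z_+,\cld_{\omega,T_1})\raro l^2(\Z_+,\cld_{\omega,T_2})$. Conjugating the defining unitary of a characteristic triple $(\cle_1,B_1,D_1)$ of $T_1$ by $\diag(V,\widetilde{\tau_*})$ on the left and $\diag(V^*,I)$ on the right then produces the block matrix $\begin{bmatrix}T_2^* & VB_1\\ C_{\omega,T_2}& \widetilde{\tau_*}D_1\end{bmatrix}$, which is unitary; hence $(\cle_1,VB_1,\widetilde{\tau_*}D_1)$ is a characteristic triple of $T_2$.

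Applying Theorem~\ref{ch-triple-unique} to the two characteristic triples $(\cle_1,VB_1,\widetilde{\tau_*}D_1)$ and $(\cle_2,B_2,D_2)$ of $T_2$ yields a unitary $\tau\colon\cle_2\raro\cle_1$ with $B_2=VB_1\tau$ and $D_2=\widetilde{\tau_*}D_1\tau$. Substituting these identities, together with $\tau_*D_{\omega,T_1}=D_{\omega,T_2}V$ and $VT_1^{*n}=T_2^{*n}V$, into the explicit formula for $\theta_{T_1}$ from Theorem~\ref{chfw} gives termwise $\tau_*(D_1)_n\tau=(D_2)_n$ and $\tau_*D_{\omega,T_1}T_1^{*n}B_1\tau=D_{\omega,T_2}T_2^{*n}B_2$, so that $\tau_*\theta_{T_1}(z)\tau=\theta_{T_2}(z)$; that is, the characteristic functions coincide.

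For the converse, assume $\theta_{T_2}(z)=\tau_*\theta_{T_1}(z)\tau$ for unitaries $\tau\colon\cle_2\raro\cle_1$ and $\tau_*\colon\cld_{\omega,T_1}\raro\cld_{\omega,T_2}$. Let $\Gamma_*$ and $\Gamma$ be the unitaries obtained by applying $\tau_*$ and $\tau$ coefficientwise, mapping $A^2_\omega(\cld_{\omega,T_1})\raro A^2_\omega(\cld_{\omega,T_2})$ and $H^2_{\cle_2}(\D)\raro H^2_{\cle_1}(\D)$ respectively; both commute with the relevant shift $M_z$. The coincidence relation is exactly $M_{\theta_{T_2}}=\Gamma_* M_{\theta_{T_1}}\Gamma$, so $\theta_{T_2}H^2_{\cle_2}(\D)=\Gamma_*\big(\theta_{T_1}H^2_{\cle_1}(\D)\big)$, whence $\Gamma_*$ maps $\clq_{T_1}$ unitarily onto $\clq_{T_2}$. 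Since $\Gamma_*$ commutes with $M_z$ and respects the decompositions $A^2_\omega(\cld_{\omega,T_i})=\clq_{T_i}\oplus\clq_{T_i}^\perp$, we obtain $P_{\clq_{T_2}}\Gamma_*=\Gamma_*P_{\clq_{T_1}}$, and therefore $\Gamma_*|_{\clq_{T_1}}$ intertwines $P_{\clq_{T_1}}M_z|_{\clq_{T_1}}$ and $P_{\clq_{T_2}}M_z|_{\clq_{T_2}}$. By the model $T_i\cong P_{\clq_{T_i}}M_z|_{\clq_{T_i}}$ of Theorem~\ref{chfw}, this gives $T_1\cong T_2$.

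The main obstacle is the forward direction, specifically verifying that $(\cle_1,VB_1,\widetilde{\tau_*}D_1)$ is again a characteristic triple of $T_2$: this is where the intertwining relations for $D_{\omega,T}$, $C_{\omega,T}$, and the full $2\times2$ unitary must be assembled simultaneously, and where the uniqueness theorem for characteristic triples is indispensable to reconcile the transported triple with the given $(\cle_2,B_2,D_2)$. Once that identification is secured, the final substitution into the formula for $\theta_T$ is routine, and the converse is a standard Beurling--Lax--Halmos type argument.
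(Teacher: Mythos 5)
Your proposal is correct and follows essentially the same route as the paper's intended proof, which is omitted in the text with a citation to Theorem 3.2 of \cite{BDS}: in the forward direction you transport the characteristic triple along the unitary (using $D_{\omega,T_2}=VD_{\omega,T_1}V^*$ and $C_{\omega,T_2}V=\widetilde{\tau_*}C_{\omega,T_1}$) and reconcile it with the given triple via Theorem~\ref{ch-triple-unique}, and in the converse you run the standard Beurling--Lax--Halmos/model argument through Theorem~\ref{chfw}. No gaps; this is precisely the argument the paper delegates to \cite{BDS}.
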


\newsection{ $\clw$-Hypercontractions }\label{Sec3}

Let, for each $i=1,\ldots ,n$, $\omega_i=\{\omega^{(i)}_{m}\}_{m \geq0}$ be a weight sequence. Then the $n$-tuple of weight sequences $\clw = (\omega_1,\ldots, \omega_n)$ is called a multi-weight sequence. Corresponding to such a multi-weight sequence $\clw$ and a Hilbert space $\cle$, consider the $\cle$-valued weighted Bergman space $A^2_{\clw}(\cle)$ over $\D^n$ consists of $f\in \mathcal{O}(\D^n,\cle)$ such that 
\[
f(z)=\sum_{\alpha\in \mathbb{Z}^n_{+}}a_{\alpha}\z^{\alpha}\ \text{ and }
\|f\|^2:=\sum_{\alpha\in\mathbb{Z}^n_{+}}\|a_{\alpha}\|^2\omega_{\alpha_1}^{(1)}\cdots\omega_{\alpha_n}^{(n)}<\infty\quad (a_{\alpha}\in\cle, \z\in\D^n).
\]
The Hilbert space $A^2_{\clw}(\cle)$ is unitarily equivalent to $A^2_{\omega_1}\otimes\cdots\otimes  A^2_{\omega_n}\otimes \cle$ via the natural unitary which sends the monomials $\z^{\alpha}e$ to $z_1^{\alpha_1}\otimes \cdots \otimes z_n^{\alpha_n}\otimes e$ for all $\alpha\in\mathbb Z_{+}^n$ and $e\in\cle$; it is also a reproducing kernel Hilbert space with kernel 
\[
K_{\clw}(\z,\w)=K_{\omega_1}(z_1,w_1)\cdots K_{\omega_n}(z_n,w_n) I_{\cle}= \sum_{\alpha\in\mathbb{Z}^n_{+}} \frac{1}{\omega_{\alpha_1}^{(1)}\cdots \omega_{\alpha_n}^{(n)}}(\z\bar{\w})^{\alpha}
I_{\cle}\quad (\z,\w\in\D^n).
\]
The analytic function associated to $K_{\clw}$ is crucial in what follows and has the form 
\[
k_{\clw}(\z)=k_{\omega_1}(z_1)\cdots k_{\omega_n}(z_n)= \sum_{\alpha\in \mathbb{Z}^n_{+}}\frac{1}{\omega_{\alpha_1}^{(1)}\cdots \omega_{\alpha_n}^{(n)}} \z^{\alpha} \quad (\z\in\D^n),
\]
where $k_{\omega_i}(z) = \sum_{m \geq0}\frac{1}{\omega^{(i)}_m}z^{m}$ is the associated analytic function corresponding to the kernel function $K_{\omega_i}$ and the weight sequence $\omega_i$.
Since each $\omega_i$ is a weight sequence, $k_{\omega_i}$ does not vanish on $\D$ and consequently $k_{\clw}$ does not vanish on $\D^n$ and 
\begin{align}\label{inverse_kernel}
\frac{1}{k_{\clw}(\z)}=\sum_{\alpha\in \mathbb{Z}^n_{+}}c_{\alpha_1}^{(1)}\cdots c_{\alpha_n}^{(n)} \z^\alpha , 
\end{align}
where the coefficients satisfy 
\[
\frac{1}{k_{\omega_i}(z)}=\sum_{m\geq0}c^{(i)}_{m}z^{m} \quad (z\in \mathbb{D}, 1\le i\le n).
\]
The several variable analogue of properties (\textbf{P}1), (\textbf{P}2) and (\textbf{P}3) that the analytic function $k_{\clw}$ should satisfy are as follows:
\begin{enumerate} [($\textbf{P}1'$)]
\item The function $\frac{k_\clw}{k_{\clw,\br}}$ has non negative Taylor coefficients for all $\br\in(0,1)^n$, where for $\br=(r_1,\dots,r_n)$ and $\z\in\D^n$, \[k_{\clw,\br}(\z)=k_{\omega_1}(r_1z_1)\cdots k_{\omega_n}(r_nz_n).\]  
\item The quotients $\frac{k_{\clw,\br}}{k_\clw}$ have uniformly bounded Taylor coefficients for all $\br\in(0,1)^n$.
\item The Taylor coefficients of the reciprocal function $\frac{1}{k_{\clw}}$ is absolutely summable and the absolute sum of Taylor coefficients of $\frac{k_{\clw ,\br}}{k_{\clw}}$
for all $\br\in(0,1)^n$ form a uniformly bounded family.
\end{enumerate}

We observe that the analytic function $k_{\clw}$ automatically inherits properties ($\textbf{P}1'$) and ($\textbf{P}2'$). 
\begin{Proposition}\label{Prop_Pro}
Let $\clw=(\omega_1,\dots,\omega_n)$ be a multi-weight sequence and $k_{\clw}$ be the associated analytic function on $\D^n$ as above. Then $k_{\clw}$ satisfies ($\textbf{P}1'$) and ($\textbf{P}2'$). 

In addition if, for all $i=1,\dots,n$, $k_{\omega_i}$ satisfies (\textbf{P}3) then $k_{\clw}$ also satisfies ($\textbf{P}3'$).
\end{Proposition}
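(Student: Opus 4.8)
The plan is to exploit the fact that both $k_\clw$ and its rescaling $k_{\clw,\br}$ factor completely into single-variable pieces, so that every quotient appearing in $(\textbf{P}1')$--$(\textbf{P}3')$ splits as a product over $i=1,\dots,n$ of the corresponding one-variable quotient in the variable $z_i$. Concretely, from the definitions one has $k_\clw(\z)=\prod_{i=1}^n k_{\omega_i}(z_i)$ and $k_{\clw,\br}(\z)=\prod_{i=1}^n k_{\omega_i}(r_iz_i)=\prod_{i=1}^n k_{\omega_i,r_i}(z_i)$, whence
\[
\frac{k_\clw(\z)}{k_{\clw,\br}(\z)}=\prod_{i=1}^n\frac{k_{\omega_i}(z_i)}{k_{\omega_i,r_i}(z_i)},\quad \frac{k_{\clw,\br}(\z)}{k_\clw(\z)}=\prod_{i=1}^n\frac{k_{\omega_i,r_i}(z_i)}{k_{\omega_i}(z_i)},\quad \frac{1}{k_\clw(\z)}=\prod_{i=1}^n\frac{1}{k_{\omega_i}(z_i)}.
\]
The single observation driving everything is that, since the $i$-th factor depends only on $z_i$, the Taylor coefficient of such a separated product at a multi-index $\alpha=(\alpha_1,\dots,\alpha_n)\in\Z_+^n$ is exactly the product of the one-variable Taylor coefficients of the respective factors at $\alpha_1,\dots,\alpha_n$.

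For $(\textbf{P}1')$ I would apply $(\textbf{P}1)$ to each $\omega_i$ to conclude that the Taylor coefficients of $k_{\omega_i}/k_{\omega_i,r_i}$ are non-negative; the multi-index coefficient of $k_\clw/k_{\clw,\br}$ is then a product of non-negative reals, hence non-negative, for every $\br\in(0,1)^n$. For $(\textbf{P}2')$, $(\textbf{P}2)$ supplies for each $i$ a constant $M_i$ bounding the Taylor coefficients of $k_{\omega_i,r_i}/k_{\omega_i}$ uniformly in $r_i\in(0,1)$; the multi-index coefficient of $k_{\clw,\br}/k_\clw$ is then bounded in modulus by $\prod_{i=1}^n M_i$, a bound uniform over all $\br\in(0,1)^n$. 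Since $(\textbf{P}1)$ and $(\textbf{P}2)$ hold for each $\omega_i$ by the very definition of a weight sequence, both assertions in the first part are unconditional.

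For $(\textbf{P}3')$, under the additional hypothesis that each $k_{\omega_i}$ satisfies $(\textbf{P}3)$, I would use the elementary fact that for a series in separated variables the $\ell^1$ norm of the coefficient array factors as the product of the one-variable $\ell^1$ norms. Writing $c^{(i)}_m$ for the Taylor coefficients of $1/k_{\omega_i}$, this gives $\sum_{\alpha\in\Z_+^n}\big|\prod_{i=1}^n c^{(i)}_{\alpha_i}\big|=\prod_{i=1}^n\big(\sum_{m\geq0}|c^{(i)}_m|\big)<\infty$, which is absolute summability of the coefficients of $1/k_\clw$. Likewise, if $b^{(i)}_m(r_i)$ denote the one-variable coefficients of $k_{\omega_i,r_i}/k_{\omega_i}$, then the absolute sum of the coefficients of $k_{\clw,\br}/k_\clw$ equals $\prod_{i=1}^n\big(\sum_{m\geq0}|b^{(i)}_m(r_i)|\big)$, and the uniform bounds furnished by $(\textbf{P}3)$ for each $i$ multiply to a bound uniform over $\br\in(0,1)^n$.

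I do not expect a genuine analytic obstacle here: the whole content is the multiplicativity of the three quotients under the tensor (separated-variable) structure of $k_\clw$, together with the fact that sign-positivity, sup-bounds, and $\ell^1$-sums of Taylor coefficients all behave multiplicatively across independent variables. The only point requiring a little care is to record cleanly that the multi-index Taylor coefficient of a separated product is the product of the one-variable coefficients, and to keep track of which of $(\textbf{P}1)$--$(\textbf{P}3)$ is built into the definition of a weight sequence (namely $(\textbf{P}1)$ and $(\textbf{P}2)$) versus which must be invoked as an extra hypothesis (namely $(\textbf{P}3)$, needed only for the second assertion).
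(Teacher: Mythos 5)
Your proposal is correct and follows essentially the same route as the paper: the paper likewise writes the quotient as a product of one-variable quotients, identifies the multi-index Taylor coefficient $a_\alpha(\br,\mathbf s)=a^{(1)}_{\alpha_1}(r_1,s_1)\cdots a^{(n)}_{\alpha_n}(r_n,s_n)$ as a product of one-variable coefficients, and then invokes $(\textbf{P}1)$, $(\textbf{P}2)$, $(\textbf{P}3)$ factor by factor. The only cosmetic difference is that the paper proves the coefficient identity for the general two-parameter quotient $k_{\clw,\br}/k_{\clw,\mathbf s}$ with $\br,\mathbf s\in(0,1]^n$ (so that the same identity can be reused later, e.g.\ in Proposition 3.4), whereas you treat the three quotients of $(\textbf{P}1')$--$(\textbf{P}3')$ separately.
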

\begin{proof}
Let $\br, \mathbf s\in (0,1]^n$. Then for each $i$ and $|z_i|< \text{min} (1/r_i, 1/s_i)$, we have
\[
\frac{k_{\omega_i}(r_i z_i)}{k_{\omega_i}(s_i z_i)}=(\sum_{n\geq0}\frac{1}{\omega^{(i)}_{n}}r_i^{n} z_i^{n})(\sum_{m\geq0}c^{(i)}_{m}s_i^{m} z_i^{m})=\sum_{m\geq0}a^{(i)}_{m}(r_i,s_i) z_i^{m},
\]
where 
\begin{equation}\label{asr}
a^{(i)}_{m}(r_i,s_i)=\sum_{0 \leq k \leq m}\frac{r_i^{k}}{\omega^{(i)}_{k}}c^{(i)}_{{m-k}} s_i^{m-k}.
\end{equation}
Then 
\begin{align*}
\frac{k_{\clw,\br}(\z)}{k_{\clw,\mathbf s}( \z)} 
&=\frac{k_{\omega_1}(r_1 z_1)}{k_{\omega_1}(s_1 z_1)}\cdots \frac{k_{\omega_n}(r_n z_n)}{k_{\omega_n}(s_n z_n)}
=\sum_{\alpha\in \mathbb{Z}^n_+}a_\alpha (\br,\mathbf s)\z^\alpha,
\end{align*}
where $a_\alpha (\br,\mathbf s)=a^{(1)}_{\alpha_1}(r_1,s_1)\cdots a^{(n)}_{\alpha_n}(r_n,s_n) $ for all $\alpha \in\mathbb{Z}^n_+$. This in particular shows that $k_{\clw}$ inherits ($\textbf{P}1'$) and ($\textbf{P}2'$) from that of $k_{\omega_i}$.

Now if each $k_{\omega_i}$ satisfies (\textbf{P}3), then it follows from ~\eqref{inverse_kernel} and the identity above that $k_{\clw}$ satisfies ($\textbf{P}3'$). This completes the proof. 
\end{proof}
Although there are abundant examples of analytic functions on $\D^n$ satisfying ($\textbf{P}1'$), ($\textbf{P}2'$) and ($\textbf{P}3'$). We provide a few examples corresponding to a certain type of natural multi-weight sequence.
\begin{Example}\label{Exam_1}
\textup{$(i)$} For any $\beta=(\beta_1, \ldots, \beta_n)\in \mathbb{R}^n_+ $ with $\beta\ge \mathbf e$,
we consider the multi-weight sequence $\clw_{\beta}= (\omega_{\beta_1},\ldots, \omega_{\beta_n})$, where $\omega_{\beta_i}= \Big\{\frac{1}{{{\beta_i+l-1} \choose {l}}}\Big\}_{l\geq0}$ and $ {{\beta_i+l-1} \choose {l}}=\frac{\Gamma(\beta_i+l)}{\Gamma{(\beta_i){l!}}}$, for $i=1,\ldots,n$. Then the analytic function on $\D^n$ corresponding to $\clw_{\beta}$ is 
\begin{align*}
k_{\clw_\beta}(\z) 
&= k_{\omega_{\beta_1}}(z_1) \cdots k_{\omega_{\beta_n}}(z_n) 
= \frac{1}{{(1-z_1)}^{\beta_1}} \cdots \frac{1}{{(1-z_n)}^{\beta_n}}\\
&= \Big(\sum_{\alpha_1\geq0}{{\beta_1+\alpha_1-1} \choose {\alpha_1}}z_1^{\alpha_1} \Big)  \cdots \Big(\sum_{\alpha_n\geq0}{{\beta_n+\alpha_n-1} \choose {\alpha_n}} z_n^{\alpha_n} \Big).
\end{align*}
By Corollary 5.5 in \cite{O}, the analytic function $k_{\omega_{\beta_i}}(z_i)=\frac{1}{(1-z_i)^{\beta_i}}$, for each $i=1,\ldots,n$, has the properties (\textbf{P}1), (\textbf{P}2), and (\textbf{P}3). Therefore, using Proposition \ref{Prop_Pro}, the analytic function $k_{\clw_\beta}(\z)$ on $\D^n$ corresponding to the multi-weight sequence $\clw_{\beta}$ also satisfies the properties ($\textbf{P}1'$), ($\textbf{P}2'$) and ($\textbf{P}3'$). 

Moreover, the reproducing kernel Hilbert space corresponding to the multi-weight sequence $\clw_{\beta}$ is the weighted Bergman space over $\D^n$ with kernel 
\[K_{\clw_{\beta}}(\z,\w)=\prod_{i=1}^n\frac{1}{(1-z_i\bar{w_i})^{\beta_i}}\quad (\z,\w\in\D^n).
\]
In particular, it is easy to see that for $\beta=(1,\dots,1)$, each of the weight sequence becomes the constant sequence $1$, which we denote by $\mathds{1}$, and the corresponding reproducing kernel Hilbert space is the Hardy space over $\D^n$. 

\textup{$(ii)$} For each $i=1,\ldots,n$, we consider the sequence $\omega_i=\{\omega^{(i)}_{k}\}_{k \geq0}$ such that it is a positive decreasing sequence satisfying $\omega^{(i)}_0=1$, $\liminf_{n \to \infty} \omega_k^{{(i)}^{\frac{1}{k}}} = 1$, and the associated analytic function $k_{\omega_i}(z)=\sum_{l \geq 0} \frac{1}{\omega^{(i)}_{l}} 
z^{l}$ ($z\in \D$) is a finite product of analytic functions corresponding to some Nevanlinna-Pick kernels \cite{AgMc}. Then by Proposition 5.4 in \cite{O}, each $\omega_i=\{\omega^{(i)}_{k}\}_{k \geq0}$ is a weight sequence and the associated analytic function $k_{\omega_i}$ satisfies (\textbf{P}3). Therefore, using Proposition \ref{Prop_Pro}, $\clw=(\omega_1,\ldots,\omega_n)$ is a multi-weight sequence and the associated analytic function $k_{\clw}$ satisfies ($\textbf{P}3'$). 

\end{Example}
 Recall that an $n$-tuple of commuting contractions $T=(T_1,\ldots,T_n)\in \clb(\clh)^n$ is an $\clw$-hypercontraction corresponding to a multi-weight sequence $\clw= (\omega_1, \ldots,\omega_n)$ if 
$ D_{\clw',T}(\br)\geq 0$ for all $\clw'\in S(\clw)$ and $\br \in (0,1)^n$,
where 
\begin{equation}\label{D_{WT}}
D_{\clw,T}(\br):=\sum_{\alpha\in \mathbb{Z}^n_{+}}c_\alpha \br^{\alpha}T^{\alpha}T^{* \alpha}, 
\end{equation}  
$c_{\alpha}$ as in (\ref{inverse_kernel})
and
\[
S(\clw):=\{(\omega_{\lambda_1}',\dots,\omega_{\lambda_n}'): \omega_{\lambda_i}'\in\{\omega_{\lambda_i}, \mathds{1}\}, i=1,\dots,n\}.
\]
For a non-empty subset $\Lambda=\{\lambda_1,\dots, \lambda_m\}$ of $I=\{1,\dots,n\}$, we set 
\[
T_{\Lambda}:=(T_{\lambda_1},\dots,T_{\lambda_m}),\ \text{ and } \clw_{\Lambda}:=(\omega_{\lambda_1},\dots,\omega_{\lambda_m}).
\]




One of the important properties is that $\clw$-hypercontractivity is preserve for subtuples of an $\clw$-hypercontraction. 

\begin{Proposition}\label{subtuple}
Let $T$ be an $\clw$-hypercontraction for some multi-weight sequence $\clw$. Then for any non-empty subset $\Lambda\subseteq I$, $T_{\Lambda}$ is a $\clw_{\Lambda}$-hypercontraction.
\end{Proposition}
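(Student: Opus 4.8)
The plan is to reduce the multi-variable statement to a computation about which defect operators vanish when we pass from the full index set $I$ to a subset $\Lambda$, and to exploit the product structure of the kernel $k_{\clw}$. Recall that $\clw$-hypercontractivity of $T$ means $D_{\clw',T}(\br)\ge 0$ for every $\clw'\in S(\clw)$ and every $\br\in(0,1)^n$, where $S(\clw)$ ranges over all choices replacing some coordinates of $\clw$ by the constant sequence $\mathds 1$. The key structural observation is that the constant sequence $\mathds 1$ is precisely what ``turns off'' a variable: since $k_{\mathds 1}(z)=\frac{1}{1-z}$ has reciprocal $1-z$, the Taylor coefficients $c^{(i)}_m$ of $1/k_{\omega_i}$ degenerate (only $c^{(i)}_0=1$, $c^{(i)}_1=-1$ survive) in that slot, so that the operator $D_{\clw',T}(\br)$ built from a sequence with $\mathds 1$ in the $j$-th position does not genuinely involve $T_j$ beyond the factor $(I-r_jT_jT_j^*)$.

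\medskip

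\emph{First I would} fix $\Lambda\subseteq I$ and spell out the relationship between $S(\clw_\Lambda)$ and $S(\clw)$. Because $S(\clw)$ already allows each coordinate outside $\Lambda$ to be chosen as $\mathds 1$, every element $\clv\in S(\clw_\Lambda)$ can be lifted to an element $\tilde{\clv}\in S(\clw)$ by inserting $\mathds 1$ in all the positions indexed by $I\setminus\Lambda$. The factorization $k_{\clw}=\prod_{i=1}^n k_{\omega_i}$ gives a corresponding factorization of the coefficients in \eqref{inverse_kernel}, namely $c_\alpha = \prod_i c^{(i)}_{\alpha_i}$, and hence $D_{\tilde{\clv},T}(\br)$ factors into a product over the coordinates. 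The plan is to show that, after choosing $r_i\to 0$ (or more precisely integrating out / taking the appropriate limit in) the coordinates $i\in I\setminus\Lambda$ where the sequence is $\mathds 1$, the positivity of $D_{\tilde{\clv},T}(\br)$ on $\clh$ forces the positivity of $D_{\clv,T_\Lambda}(\br_\Lambda)$ on $\clh$, where $\br_\Lambda$ denotes the subtuple of $\br$ indexed by $\Lambda$.

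\medskip

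\emph{Concretely}, I would write $D_{\tilde{\clv},T}(\br)$ as a product of two commuting positive pieces: the piece coming from the coordinates in $\Lambda$, which is exactly $D_{\clv,T_\Lambda}(\br_\Lambda)$ regarded as an operator on $\clh$, and the piece coming from the $\mathds 1$-slots, which is $\prod_{j\in I\setminus\Lambda}(I-r_jT_jT_j^*)$. Since the $T_j$ are commuting contractions, each factor $I-r_jT_jT_j^*$ is strictly positive and invertible for $r_j\in(0,1)$, and these factors commute with $D_{\clv,T_\Lambda}(\br_\Lambda)$. Multiplying a self-adjoint operator by commuting positive invertible factors preserves the sign, so $D_{\tilde{\clv},T}(\br)\ge 0$ is equivalent to $D_{\clv,T_\Lambda}(\br_\Lambda)\ge 0$. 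As $\clv$ ranges over all of $S(\clw_\Lambda)$ and $\br_\Lambda$ over $(0,1)^{|\Lambda|}$ (with the remaining $r_j$ free in $(0,1)$), this yields precisely the defining inequalities for $T_\Lambda$ to be a $\clw_\Lambda$-hypercontraction.

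\medskip

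\emph{The main obstacle} I anticipate is justifying the clean factorization of $D_{\tilde{\clv},T}(\br)$ into the $\Lambda$-part times the $\mathds 1$-part at the level of the hereditary functional calculus. The series defining $D_{\clw,T}(\br)$ in \eqref{D_{WT}} converges by (\textbf{P}3$'$), and the hereditary calculus of Agler is multiplicative on products of functions in separate variables acting on commuting operators; but one must check carefully that the ordering $T^\alpha T^{*\alpha}$ respects the split, i.e. that $T^\alpha T^{*\alpha}=T_\Lambda^{\alpha_\Lambda}\big(\prod_{j\notin\Lambda}T_j^{\alpha_j}\big)\big(\prod_{j\notin\Lambda}T_j^{*\alpha_j}\big)T_\Lambda^{*\alpha_\Lambda}$ can be rearranged into the desired commuting-product form using only the commutativity of $\{T_i\}$ (and of their adjoints among themselves). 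Granting this rearrangement—which is where the commutativity hypothesis does its essential work—the argument reduces to the elementary operator-positivity fact about commuting positive factors, and the proof is complete. \qed
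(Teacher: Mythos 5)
Your first step---lifting each $\clv\in S(\clw_\Lambda)$ to $\tilde{\clv}\in S(\clw)$ by inserting $\mathds 1$ in the slots outside $\Lambda$---is exactly how the paper begins. But the core of your argument, the factorization
\[
D_{\tilde{\clv},T}(\br)\;=\;D_{\clv,T_\Lambda}(\br_\Lambda)\,\prod_{j\notin\Lambda}\bigl(I-r_jT_jT_j^*\bigr),
\]
is genuinely false for a merely commuting tuple, and the rearrangement you ask the reader to ``grant'' cannot be done. The hereditary functional calculus puts all unstarred operators to the left of all starred ones, so what you actually get is the \emph{sandwiched} expression
\[
D_{\tilde{\clv},T}(\br)\;=\;\sum_{\alpha_\Lambda} c_{\alpha_\Lambda}\br_\Lambda^{\alpha_\Lambda}\,
T_\Lambda^{\alpha_\Lambda}\Bigl(\prod_{j\notin\Lambda}(I-r_jT_jT_j^*)\Bigr)T_\Lambda^{*\alpha_\Lambda}.
\]
To convert this into the operator product above you must move $T_\Lambda^{*\alpha_\Lambda}$ past the unstarred $T_j$'s ($j\notin\Lambda$), which requires $T_i^*T_j=T_jT_i^*$ for $i\ne j$, i.e.\ a \emph{doubly} commuting tuple---precisely the hypothesis the proposition does not have. (A secondary problem: even the ``$\mathds 1$-part'' $\prod_{j\notin\Lambda}(I-r_jT_jT_j^*)$ need not be a positive operator, since its factors need not commute with one another for a merely commuting tuple.)

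The paper's proof repairs exactly this defect by working at the level of completely positive maps rather than operator products. Taking $\Lambda=\{1,\dots,n-1\}$ and $\clw''=(\clw',\mathds 1)$, the sandwiched identity above (with a single extra index $n$) reads
\[
D_{\clw'',T}(\br)\;=\;D_{\clw',T_\Lambda}(\br')-r_nT_n\,D_{\clw',T_\Lambda}(\br')\,T_n^*
\;=\;\bigl(I-r_nC_{T_n}\bigr)\bigl(D_{\clw',T_\Lambda}(\br')\bigr),
\]
where $C_{T_n}(X)=T_nXT_n^*$; this identity uses only commutativity of the $T_i$'s among themselves and of the $T_i^*$'s among themselves. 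One then inverts the map $I-r_nC_{T_n}$ by the telescoping (Neumann) series: for each $l$,
\[
D_{\clw',T_\Lambda}(\br')=\sum_{k=0}^{l}r_n^kT_n^k\,D_{\clw'',T}(\br)\,T_n^{*k}
+r_n^{l+1}T_n^{l+1}D_{\clw',T_\Lambda}(\br')T_n^{*(l+1)},
\]
and since $r_n<1$ the remainder vanishes as $l\to\infty$, exhibiting $D_{\clw',T_\Lambda}(\br')$ as a norm-convergent sum of operators of the form $r_n^kT_n^k\,(\text{positive})\,T_n^{*k}\ge 0$. If you want to salvage your outline, replace ``divide out the positive invertible factor $I-r_jT_jT_j^*$'' by ``apply the positivity-preserving inverse $\sum_{k\ge0}r_j^kC_{T_j}^k$ of the map $I-r_jC_{T_j}$,'' one index $j\notin\Lambda$ at a time; that is the paper's argument.
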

\begin{proof}
We only consider $\Lambda= \{1,\ldots,n-1\}$ and show that $T_{\Lambda}$ is an $\clw_{\Lambda}$-hypercontraction, as the argument for general $\Lambda\subseteq I$ is similar. Let 
$\clw'=(\omega_1',\ldots,\omega_{n-1}')\in S(\clw_{\Lambda})$ and set $\clw'': = ( \clw',\mathds{1}) = (\omega'_1, \ldots, \omega'_{n-1},\mathds{1})\in S(\clw)$.
Since $T=(T_1,\ldots,T_n)$ is an $\clw$-hypercontraction and $\clw''\in S(\clw)$,  $D_{\clw'', T}(\br) \geq 0$ for all $\br \in (0,1)^n$. \\
Then using Agler's hereditary calculus to the identity 
\[ k_{\clw''}(r_1z_1,\ldots,r_nz_n) = (1-r_nz_n)^{-1} k_{\clw'}(r_1z_1,\ldots,r_{n-1}z_{n-1}), \] 
we get 
\[ D_{\clw'',T}(\br) = D_{\clw',T_{\Lambda}}(\br') - r_nT_n D_{\clw',T_{\Lambda}}(\br')T_n^* ,\]
where $\br'=(r_1,\dots,r_{n-1})$.
Now using telescoping sum we have
\[ D_{\clw',T_{\Lambda}}(\br^\prime) = \sum_{k=0}^l r_n^kT_n^k D_{\clw'',T}(\br) T_n^{*k} + r_n^{l+1}T_n^{l+1}D_{\clw',T_{\Lambda}}(\br^\prime) T_n^{*(l+1)}.\]
Since $r_n\in(0,1)$, taking limit as $l\to\infty$
we conclude that 
\[ D_{\clw',T_{\Lambda}}(\br^\prime) = \sum_{k=0}^{\infty}r^k_n T_n^k D_{\clw'',T}(\br) T_n^{*k}\ge 0.
\]
This completes the proof.
\end{proof}

The following proposition is the key to define defect operator and defect space corresponding to an $\clw$-hypercontraction. 

\begin{Proposition}\label{Pro_Decr}
Let $\clw$ be a multi-weight sequence. Let $T$ be an $n$-tuple of commuting contractions such that $D_{\clw, T}(\br)\ge 0$ for all $\br \in (0,1)^n$. Then 
\[\text{SOT}-\lim_{\br \to \mathbf e} D_{\clw, T}(\br)
\]
is a positive operator, where $\mathbf e=(1,\dots,1)$.

Moreover, if the associated analytic function $k_\clw$ satisfies ($\textbf{P}3'$) then 
\[
\lim_{\br \to \mathbf e} D_{\clw, T}(\br)=\sum_{\alpha\in \mathbb{Z}^n_+}c_\alpha T^\alpha T^{* \alpha},
\]
where the sum converges in the operator norm in $\clb(\clh)$.
\end{Proposition}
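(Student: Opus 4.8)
The plan is to reduce both assertions to a single comparison identity relating $D_{\clw,T}(\br)$ at two parameters $\br\le\mathbf s$ (coordinatewise) in $(0,1)^n$, and then to read off the conclusions from it.

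First I would record the identity. Writing $\br\odot\z=(r_1z_1,\dots,r_nz_n)$, so that $k_{\clw,\br}(\z)=k_{\clw}(\br\odot\z)$, one has
\[
\frac{1}{k_{\clw,\br}(\z)}=\frac{1}{k_{\clw,\mathbf s}(\z)}\cdot\frac{k_{\clw,\mathbf s}(\z)}{k_{\clw,\br}(\z)}.
\]
By ($\textbf{P}1'$) (equivalently the computation in the proof of Proposition \ref{Prop_Pro}, applied with relative parameter $r_i/s_i\le 1$), for $\br\le\mathbf s$ the quotient $k_{\clw,\mathbf s}/k_{\clw,\br}$ has non-negative Taylor coefficients, say $\sum_{\gamma}b_\gamma(\br,\mathbf s)\z^\gamma$ with $b_\gamma\ge 0$ and $b_0=1$. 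Comparing coefficients in the displayed factorization gives $c_\alpha\br^\alpha=\sum_{\beta+\gamma=\alpha}c_\beta\mathbf s^\beta\,b_\gamma$. Substituting into $D_{\clw,T}(\br)=\sum_{\alpha}c_\alpha\br^\alpha T^\alpha T^{*\alpha}$, using commutativity of $T$ to rewrite $T^\alpha T^{*\alpha}=T^\gamma(T^\beta T^{*\beta})T^{*\gamma}$ for $\alpha=\beta+\gamma$, and rearranging the (absolutely norm-convergent) double sum, I obtain the key identity
\[
D_{\clw,T}(\br)=\sum_{\gamma\in\Z_+^n}b_\gamma(\br,\mathbf s)\,T^\gamma D_{\clw,T}(\mathbf s)\,T^{*\gamma}\qquad(\br\le\mathbf s).
\]

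For the first assertion, since $D_{\clw,T}(\mathbf s)\ge 0$ by hypothesis and each $b_\gamma\ge 0$, every summand is positive; isolating the $\gamma=0$ term (where $b_0=1$) yields $D_{\clw,T}(\br)\ge D_{\clw,T}(\mathbf s)\ge 0$ whenever $\br\le\mathbf s$. Thus $\{D_{\clw,T}(\br)\}_{\br\in(0,1)^n}$ is a net of positive operators, monotone decreasing in $\br$, and for $\br$ beyond any fixed $\br_0$ uniformly bounded above by $D_{\clw,T}(\br_0)$. A decreasing net of self-adjoint operators bounded below converges in the strong operator topology to its infimum; by monotonicity this limit is exactly $\text{SOT-}\lim_{\br\to\mathbf e}D_{\clw,T}(\br)$, and it is positive as an SOT-limit of positive operators.

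For the second assertion, ($\textbf{P}3'$) gives $\sum_{\alpha}|c_\alpha|<\infty$, so $\sum_{\alpha}c_\alpha T^\alpha T^{*\alpha}$ converges in operator norm because $\|T^\alpha T^{*\alpha}\|\le 1$. Then
\[
\Big\|D_{\clw,T}(\br)-\sum_{\alpha}c_\alpha T^\alpha T^{*\alpha}\Big\|\le\sum_{\alpha}|c_\alpha|\,(1-\br^\alpha),
\]
and since $0\le 1-\br^\alpha\le 1$ with $1-\br^\alpha\to 0$ as $\br\to\mathbf e$ while $|c_\alpha|$ is summable, dominated convergence for series forces the right-hand side to $0$. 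Hence $D_{\clw,T}(\br)\to\sum_{\alpha}c_\alpha T^\alpha T^{*\alpha}$ in norm, and in particular this norm-limit agrees with the SOT-limit found above. The only genuinely delicate points are bookkeeping ones: justifying the rearrangement that produces the key identity (which needs absolute norm convergence of the double series, guaranteed by $\sum_\beta|c_\beta|\mathbf s^\beta<\infty$ from analyticity of $1/k_{\clw}$ on $\D^n$ and by $\sum_\gamma b_\gamma(\br,\mathbf s)=k_{\clw,\mathbf s}(\mathbf e)/k_{\clw,\br}(\mathbf e)<\infty$), and carefully tracking the operator ordering when passing from $T^\alpha T^{*\alpha}$ to $T^\gamma(T^\beta T^{*\beta})T^{*\gamma}$. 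The conceptual key that keeps the argument short is to carry out the factorization at the full $n$-variable level, so that the hypothesis $D_{\clw,T}(\mathbf s)\ge 0$ can be used directly, rather than peeling off one variable at a time — which would force control of $D_{\clw_\Lambda,T_\Lambda}$ on a subtuple, an operator that need not be positive under the present hypotheses.
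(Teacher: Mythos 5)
Your proof is correct and follows essentially the same route as the paper: your key identity $D_{\clw,T}(\br)=\sum_{\gamma}b_\gamma(\br,\mathbf s)\,T^\gamma D_{\clw,T}(\mathbf s)\,T^{*\gamma}$ for $\br\le\mathbf s$ is precisely the identity the paper derives from ($\textbf{P}1'$) via Agler's hereditary functional calculus, and the remaining steps (monotone decreasing net of positive operators gives the SOT limit; under ($\textbf{P}3'$) summability of $|c_\alpha|$ plus dominated convergence gives norm convergence) are the paper's steps verbatim. The only cosmetic difference is that the paper first normalizes so that the coordinates with $r_i=s_i$ drop out, while you keep all $n$ coordinates and justify the coefficient rearrangement by hand.
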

\begin{proof}
For the existence of the strong operator limit, it is enough to show that if $\br,\mathbf s\in (0,1)^n$ with $\br \leq \mathbf s$ then $D_{\clw,T}(\mathbf s)\le D_{\clw,T}(\br)$. Let $ \br \leq \mathbf s $. Without any loss of generality, we assume that $r_i=s_i$ for all $i=1,\dots,m$ and $r_i< s_i$ for all $i=m+1,\dots,n$. Then using the identity 
\begin{align*}
\frac{k_{\clw}(s_1 z_1)\cdots k_{\clw}(s_nz_n)}{k_{\clw}(r_1 z_1)\cdots k_{\clw}(r_nz_n)} 
&
= \frac{k_{\clw}(s_{m+1} z_{m+1})\cdots k_{\clw}(s_nz_n)}{k_{\clw}(r_{m+1} z_{m+1})\cdots k_{\clw}(r_nz_n)}
=\sum_{\alpha\in \mathbb{Z}^{n-m}_+}a_\alpha ( s_{m+1},\dots,s_n,r_{m+1},\dots r_n)\z^\alpha,
\end{align*}
where $a_\alpha ( s_{m+1},\dots,s_n,r_{m+1},\dots r_n)=a^{(m+1)}_{\alpha_1}(s_{m+1},r_{m+1})\cdots a^{(n)}_{\alpha_{n-m}}(s_n,r_n) $ for all $\alpha \in\mathbb{Z}^{n-m}_+$, 
and Agler's hereditary functional calculus we have that 
\begin{equation}\label{D_iden}
D_{\clw,T}(\br)=\sum_{\alpha\in \mathbb{Z}^{n-m}_{+}}a_\alpha ( s_{m+1},\dots s_n, r_{m+1},\cdots, r_n) T^{\alpha} D_{\clw,T}(\mathbf s) T^{* \alpha}. 
\end{equation} 
Moreover, since $(\omega_{m+1},\dots,\omega_n)$ is a multi-weight sequence, by ($\textbf{P}1'$), \[a_\alpha ( s_{m+1},\dots s_n, r_{m+1},\cdots, r_n)\ge 0\] for all $(\alpha\in\mathbb Z^{n-m}_{+})$ and $a_{(0,\dots,0)} ( s_{m+1},\dots s_n, r_{m+1},\cdots, r_n)=1$. This shows that $D_{\clw,T}(\mathbf s)\le D_{\clw, T}(\br)$.

Finally, if $k_\clw$ satisfies ($\textbf{P}3'$), by a simple use of Lebesgue's dominated convergence theorem we also have 
$ D_{\clw, T}(\br)\to \sum_{\alpha\in \mathbb{Z}^n_+}c_\alpha T^\alpha T^{* \alpha}$ as $\br\to\mathbf{e}$ in $\clb(\clh)$.
This completes the proof.
\end{proof}

\begin{Remark}\label{split limit}
The proof of the above proposition also suggests that if $\br \in (0,1)^m$ and $\mathbf s\in (0,1)^{n-m}$ then 
\[
\text{SOT}-\lim_{\br \to \mathbf (1,\dots,1)} D_{\clw, T}(\br,\mathbf s)
\geq 0.
\]

\end{Remark}
 We denote the positive operator whose existence is shown in the above proposition by 
 \[
 D_{\clw,T}(\mathbf e):=\text{SOT}-\lim_{\br \to \mathbf e} D_{\clw, T}(\br),
 \]
 and the defect operator and the defect space of $T$ by
\begin{equation}\label{defect}
 D_{\clw,T}:=D_{\clw,T}(\mathbf e)^{1/2} \text{ and }
 \cld_{\clw,T}:=\overline{\text{ran}} D_{\clw,T},
\end{equation}
respectively.

It could be difficult in general to determine when an $n$-tuple of commuting contractions $T=(T_1,\ldots,T_n)$ is an $\clw$-hypercontraction as it asks to verify infinitely many inequalities. 
However, for certain multi-weight sequences it becomes easier to verify.
For instance, consider the multi-weight sequence $\clw_{\gamma}$ for $\gamma=(\gamma_1,\ldots, \gamma_n)\in \Z^n_+$ with $\gamma\ge \mathbf e$ as in ~\ref{Exam_1}. In this case, we show that the notion of $\gamma$-contractive multi-operator in the sense of ~\cite{CV} is same as $\clw_{\gamma}$-hypercontraction. Recall that an $n$-tuple of commuting contractions $T=(T_1,\dots,T_n)$
is a $\gamma$-contractive multi-operator if for all $0\le \beta\le \gamma$,
\[
K_{\clw_{\beta}}^{-1}(T,T^*)=\sum_{0\le \alpha\le \beta}\frac{\beta!}{\alpha!(\beta-\alpha)!}T^{\alpha}T^{*\alpha}\ge 0.
\]
Observe that the operator $K_{\clw_\beta}^{-1}(T,T^*)$ can also be represented as 
\[
K_{\clw_{\beta}}^{-1}(T,T^*)=(I-C_{T_1})^{\beta_1}\cdots (I-C_{T_n})^{\beta_n}(I),
\]
where for an operator $A\in\clb(\clh)$, the completely positive map $C_A:\clb(\clh) \to \clb(\clh)$ is defined by $C_A(X)=A X A^*$ ($X\in \clb(\clh)$).
We begin with the following lemma.
\begin{Lemma}\label{intermediate}
Let $\gamma=(\gamma_1,\dots,\gamma_n)\in \mathbb R_{+}^n$ and $\br\in (0,1)^n$. If $ T=(T_1,\ldots ,T_n)\in \clb(\clh)^n$ satisfies $D_{\clw_{\gamma}, T}(\br)\ge 0$  then $D_{\clw_{\beta},T}(\br)\ge 0$ for all $\beta\in\mathbb R_+^n$ such that 
 $0 \leq \beta \leq \gamma$.
\end{Lemma}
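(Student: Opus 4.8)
The plan is to realize each operator $D_{\clw_\gamma,T}(\br)$ as the image of the identity under a functional-calculus expression in the mutually commuting completely positive maps $C_{T_i}$, and then to pass from $\gamma$ to $\beta$ by applying a single positivity-preserving map to the hypothesis. First I would record that for $\clw_\gamma$ the reciprocal of the associated analytic function factors as $1/k_{\clw_\gamma}(\z)=\prod_{i=1}^n(1-z_i)^{\gamma_i}$, with $(1-z_i)^{\gamma_i}=\sum_{m\ge 0}(-1)^m\binom{\gamma_i}{m}z_i^m$ the binomial series for the real exponent $\gamma_i$. Feeding this into Agler's hereditary functional calculus and using that the $T_i$ (hence the $C_{T_i}$) commute, I obtain the operatorial identity
\[
D_{\clw_\gamma,T}(\br)=\prod_{i=1}^n\big(I-r_iC_{T_i}\big)^{\gamma_i}(I),\qquad \big(I-r_iC_{T_i}\big)^{\gamma_i}:=\sum_{m\ge 0}(-1)^m\binom{\gamma_i}{m}r_i^m\,C_{T_i}^{\,m}.
\]
Since each $T_i$ is a contraction we have $\|r_iC_{T_i}\|\le r_i<1$ on the Banach space $\clb(\clh)$, so the spectral radius of $r_iC_{T_i}$ is strictly below $1$ and every such series converges absolutely in operator norm.

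Next, writing $s_i:=\gamma_i-\beta_i\ge 0$, I would invoke the multiplicativity of the holomorphic functional calculus, namely $(1-x)^{-s_i}(1-x)^{\gamma_i}=(1-x)^{\beta_i}$ on a neighbourhood of the spectrum of $r_iC_{T_i}$, together with the commutativity of the factors, to rewrite
\[
D_{\clw_\beta,T}(\br)=\prod_{i=1}^n\big(I-r_iC_{T_i}\big)^{\beta_i}(I)=M\big(D_{\clw_\gamma,T}(\br)\big),\qquad M:=\prod_{i=1}^n\big(I-r_iC_{T_i}\big)^{-s_i}.
\]
The crucial observation is that $M$ is positivity-preserving. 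Indeed, for $s_i\ge 0$ one has $\big(I-r_iC_{T_i}\big)^{-s_i}=\sum_{m\ge 0}\binom{s_i+m-1}{m}r_i^m\,C_{T_i}^{\,m}$, and every coefficient $\binom{s_i+m-1}{m}=\frac{s_i(s_i+1)\cdots(s_i+m-1)}{m!}$ is nonnegative, while each $C_{T_i}^{\,m}(X)=T_i^mXT_i^{*m}$ sends positive operators to positive operators. Hence $M$, being a norm-convergent composition of sums of positive maps with nonnegative coefficients, maps positive operators to positive operators.

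Applying $M$ to the hypothesis $D_{\clw_\gamma,T}(\br)\ge 0$ then gives $D_{\clw_\beta,T}(\br)=M\big(D_{\clw_\gamma,T}(\br)\big)\ge 0$, which is the assertion. I expect the genuinely delicate points to be purely formal ones: justifying the functional-calculus representation of $D_{\clw_\gamma,T}(\br)$ and the semigroup relation $\big(I-r_iC_{T_i}\big)^{-s_i}\big(I-r_iC_{T_i}\big)^{\gamma_i}=\big(I-r_iC_{T_i}\big)^{\beta_i}$ for real (possibly non-integer) exponents. Both reduce to the standard holomorphic functional calculus once the norm bound $\|r_iC_{T_i}\|<1$ is in hand, so the main obstacle is this bookkeeping rather than the positivity step, which is immediate from the signs of the generalized binomial coefficients. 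As an alternative, one could prove the single-variable statement by exactly this factorization and then peel off the coordinates one at a time, but the commutativity of the $C_{T_i}$ makes the direct multivariable argument cleaner.
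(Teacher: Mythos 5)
Your proposal is correct and is essentially the paper's own argument: both hinge on the factorization $(1-rz)^{\beta_i}=(1-rz)^{-(\gamma_i-\beta_i)}(1-rz)^{\gamma_i}$, the nonnegativity of the generalized binomial coefficients $\binom{\gamma_i-\beta_i+m-1}{m}$ for $\gamma_i-\beta_i\ge 0$, and Agler's hereditary calculus, so that your positivity-preserving map $M$ applied to $D_{\clw_{\gamma},T}(\br)$ is precisely the paper's expansion $D_{\clw_{\beta},T}(\br)=\sum_{\delta\in\Z_+^n}\br^{\delta}\binom{\gamma-\beta+\delta-\mathbf e}{\delta}T^{\delta}D_{\clw_{\gamma},T}(\br)T^{*\delta}\ge 0$. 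The only difference is packaging (completely positive map language versus the explicit series), not substance.
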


\begin{proof}
Let $0\le \beta\le \gamma$.
For $(z_1,\dots,z_n)\in\mathbb D^n$ and $\br\in(0,1)^n$, applying Agler's hereditary functional calculus to the identity
 \[
(1-r z_1)^{\beta_1} \dots (1-r z_n)^{\beta_n} =\frac{1}{(1-r z_1)^{\gamma_1 - \beta_1} \ldots (1-r z_n)^{\gamma_n - \beta_n}} (1- r z_1)^{\gamma_1} \dots (1- r z_n)^{\gamma_n},\]
we have
\begin{equation*}
D_{\clw_{\beta},T}(\br)=\sum_{\delta \in \Z^n_+} \br^{\delta} {{\gamma - \beta+ \delta -\mathbf e} \choose \delta} T^\delta D_{\clw_{\gamma},T}(\br) 
T^{* \delta}\ge 0,
\end{equation*}
where for $\beta=(\beta_1,\dots,\beta_n)$ and $\alpha=(\alpha_1,\dots,\alpha_n)$, ${\beta \choose \alpha}={\beta_1 \choose \alpha_1}\cdots {\beta_n \choose \alpha_n}$.
This completes the proof.
\end{proof}

To simplify computations we borrow the following notation from ~\cite{CV}. For $\beta\in\mathbb{Z}_+^n$, we denote by $\Delta^{\beta}_T$ the map on $\clb(\clh)$ defined by 
\[\Delta^\beta_T:=(I-C_{T_1})^{\beta_1}\cdots (I-C_{T_n})^{\beta_n}\]
and for $\br=(r_1, \ldots, r_n) \in (0,1)^n$,
\[\Delta^\beta_{\br T}:=(I-r_1C_{T_1})^{\beta_1}\cdots (I-r_n C_{T_n})^{\beta_n}.\]

\begin{Theorem}\label{equivalence 1}
Let $T=(T_1,\ldots ,T_n)\in \clb(\clh)^n$ be an $n$-tuple of commuting contractions and  $\gamma \in \Z^n_+$ such that  $\gamma \geq \mathbf e$. Then the following are equivalent.
\begin{enumerate}[(a)]
    \item  
$T$ is an $\gamma$-contractive multi-operator.
    \item 
$T$ is an $\clw_{\gamma}$-hypercontraction.
\end{enumerate}
\end{Theorem}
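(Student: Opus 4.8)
The plan is to prove the equivalence of the two conditions (a) and (b) by exploiting the two characterizations I have at hand: the definition of $\clw_{\gamma}$-hypercontraction in terms of positivity of $D_{\clw',T}(\br)$ for all $\clw'\in S(\clw_{\gamma})$ and $\br\in(0,1)^n$, and the definition of $\gamma$-contractive multi-operator in terms of positivity of $K_{\clw_{\beta}}^{-1}(T,T^*)=\Delta^{\beta}_T(I)$ for all $0\le\beta\le\gamma$. The bridge between these two is the observation that any element of $S(\clw_{\gamma})$ is precisely a multi-weight sequence of the form $\clw_{\beta}$ where $\beta$ is obtained from $\gamma$ by replacing some coordinates $\gamma_i$ by $1$ (since $\mathds 1=\omega_1$ in the notation $\omega_{\beta_i}$ with $\beta_i=1$). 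Thus the condition $D_{\clw',T}(\br)\ge 0$ for all $\clw'\in S(\clw_{\gamma})$ is a statement about $D_{\clw_{\beta},T}(\br)$ for a specific family of integer vectors $\beta$ with each $\beta_i\in\{1,\gamma_i\}$.

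For the implication \textbf{(a)}$\Rightarrow$\textbf{(b)}, I would start from $\gamma$-contractivity, which gives $K_{\clw_{\beta}}^{-1}(T,T^*)=\Delta^{\beta}_T(I)\ge 0$ for all $0\le\beta\le\gamma$. I then need to produce $D_{\clw_{\beta},T}(\br)\ge 0$ for $\br\in(0,1)^n$ and the relevant $\beta$'s. The natural tool is Agler's hereditary functional calculus applied to the factorization relating $D_{\clw_{\beta},T}(\br)$ to the operators $\Delta^{\delta}_T$; the key algebraic identity is that $D_{\clw_{\beta},T}(\br)$, which is the hereditary calculus applied to $\prod_i(1-r_iz_i)^{\beta_i}$, can be expanded as a positive combination (with coefficients built from binomial-type terms and powers of $r_i\in(0,1)$) of the operators $\Delta^{\delta}_{\br T}(I)$. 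Using $r_i\in(0,1)$ one can interpolate between the integer values; the positivity of all the intermediate defect operators should follow once one knows it at the integer endpoints, and Lemma~\ref{intermediate} already handles the passage to smaller $\beta$. I expect the cleanest route is to verify that $D_{\clw_{\gamma},T}(\br)\ge0$ follows from $\Delta^{\gamma}_{\br T}(I)\ge0$, and the latter from $\Delta^{\gamma}_T(I)\ge 0$ by a convexity/telescoping argument in each variable separately (as in Proposition~\ref{subtuple}), then invoke Lemma~\ref{intermediate} to cover all $\beta\le\gamma$, and finally restrict to the $\beta$ with coordinates in $\{1,\gamma_i\}$ to conclude $\clw_{\gamma}$-hypercontractivity.

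For the converse \textbf{(b)}$\Rightarrow$\textbf{(a)}, I would use that $\clw_{\gamma}$-hypercontractivity gives in particular $D_{\clw_{\gamma},T}(\br)\ge 0$ for all $\br\in(0,1)^n$ (taking $\clw'=\clw_{\gamma}$), and then pass to the limit $\br\to\mathbf e$ using Proposition~\ref{Pro_Decr}: since $k_{\clw_{\gamma}}$ satisfies $(\textbf{P}3')$ (Example~\ref{Exam_1}(i)), the limit equals $\sum_{\alpha}c_{\alpha}T^{\alpha}T^{*\alpha}$, which for the binomial weight is exactly $K_{\clw_{\gamma}}^{-1}(T,T^*)=\Delta^{\gamma}_T(I)$. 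To get $\Delta^{\beta}_T(I)\ge 0$ for all $0\le\beta\le\gamma$, I would take the appropriate $\clw'\in S(\clw_{\gamma})$ corresponding to each integer $\beta$ whose coordinates are either $1$ or $\gamma_i$; but since general $\beta\le\gamma$ need not have coordinates in $\{1,\gamma_i\}$, I instead combine the endpoint positivity with Lemma~\ref{intermediate} (which yields $D_{\clw_{\beta},T}(\br)\ge0$ for all real $0\le\beta\le\gamma$ from $D_{\clw_{\gamma},T}(\br)\ge0$) and then let $\br\to\mathbf e$ to recover $\Delta^{\beta}_T(I)\ge0$ for every integer $\beta$ with $0\le\beta\le\gamma$.

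The main obstacle I anticipate is the bookkeeping in the hereditary-calculus expansion: one must carefully justify that the coefficients appearing when expanding $\prod_i(1-r_iz_i)^{\beta_i}$ in terms of the $\Delta$-operators (or the reverse expansion of the reciprocal kernel) are nonnegative for $\br\in(0,1)^n$, and that the limiting operations commute with the (strong-operator convergent) series. The positivity at the integer endpoints, i.e.\ showing $\Delta^{\gamma}_{\br T}(I)\ge 0$ from $\Delta^{\gamma}_T(I)\ge 0$, is the genuinely nontrivial step and is most naturally done one variable at a time via the telescoping identity $(I-r_iC_{T_i})=(1-r_i)I+r_i(I-C_{T_i})$ together with the fact that $C_{T_i}$ preserves positivity; once this single-variable mechanism is in place, the multivariable statement follows by iterating and the rest is the interpolation supplied by Lemma~\ref{intermediate}.
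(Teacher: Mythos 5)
Your direction (b)$\Rightarrow$(a) is exactly the paper's argument: take $\clw'=\clw_{\gamma}$ to get $D_{\clw_{\gamma},T}(\br)\ge 0$, apply Lemma~\ref{intermediate} to get $D_{\clw_{\beta},T}(\br)\ge 0$ for all $0\le\beta\le\gamma$, and let $\br\to\mathbf e$ via Proposition~\ref{Pro_Decr} (for integer $\beta$ the defining sum is finite, so this limit step is immediate). Your direction (a)$\Rightarrow$(b), however, takes a genuinely different and cleaner route. The paper proves the operator inequality $\Delta^{\beta}_{(T',rT_n)}(I)\ge\Delta^{\beta}_{T}(I)\ge 0$ one variable at a time: it expands the difference $\Delta^{\beta}_{(T',rT_n)}(I)-\Delta^{\beta}_{T}(I)$ as $(1-r)\sum_{l}\Delta^{\beta'}_{T'}C_{T_n}(I-rC_{T_n})^{\beta_n-1-l}(I-C_{T_n})^{l}(I)$ and then runs a downward induction showing each mixed term dominates $\Delta^{(\beta',\beta_n-1)}_{T}(I)\ge 0$. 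Your route instead uses the convexity identity $I-r_iC_{T_i}=(1-r_i)I+r_i(I-C_{T_i})$ and the fact that these maps on $\clb(\clh)$ commute, to expand
\[
D_{\clw_{\beta},T}(\br)=\Delta^{\beta}_{\br T}(I)=\sum_{0\le\delta\le\beta}\Big(\prod_{i=1}^n\binom{\beta_i}{\delta_i}(1-r_i)^{\beta_i-\delta_i}r_i^{\delta_i}\Big)\Delta^{\delta}_{T}(I),
\]
a combination with nonnegative coefficients of the operators $\Delta^{\delta}_{T}(I)$, $0\le\delta\le\beta\le\gamma$, all of which are positive by hypothesis (a); this one-line expansion replaces the paper's chain of inequalities and even makes Lemma~\ref{intermediate} dispensable in this direction. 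Two caveats. First, your phrase ``the latter from $\Delta^{\gamma}_T(I)\ge 0$'' is imprecise: the expansion needs positivity of \emph{all} intermediate $\Delta^{\delta}_T(I)$, $\delta\le\gamma$, not only the top one (in several variables the top condition alone does not imply the intermediate ones); this is harmless because hypothesis (a) supplies the whole family, as you note at the outset, but the step should be stated that way. Second, the paper's heavier argument buys something your binomial expansion does not: it survives fractional exponents, and it is precisely that mechanism (not the integer binomial theorem) which the paper reuses in the companion result for $\gamma\in\mathbb{R}^n_+$.
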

\begin{proof}
If $T$ is an $\clw_{\gamma}$-hypercontraction then by 
Lemma~\ref{intermediate}, it follows that 
$D_{\clw_{\beta},T}(\br)\ge 0$ for all $0\le \beta\le \gamma$ and $\br\in(0,1)^n$. Then by taking limit as $\br\to \mathbf e$ and using  Proposition~\ref{Pro_Decr}, we conclude that $D_{\clw_{\beta},T}(\mathbf e)=K_{\clw_{\beta}}^{-1}(T,T^*)\ge 0$ for all $0\le \beta\le \gamma$. This proves $(b)\Rightarrow (a)$.

To prove $(a)\Rightarrow (b)$, let $0\le \beta\le \gamma$. We work on one component at a time and only show that $\Delta_{(T',rT_n)}^{\beta}(I)\ge 0$ for all $r\in(0,1)$, where $T'=(T_1,\dots,T_{n-1})$, as repetition of the same argument will then establish $\Delta_{\br T}^{\beta}(I)\ge 0$ for all $\br\in(0,1)^n$. To this end, we show that $\Delta^{\beta}_{(T',rT_n)}(I)\ge \Delta^{\beta}_{T}(I)\ge 0$. If $\beta=(\beta_1,\dots,\beta_n)$ and $\beta_n=0$, then there is nothing to prove. Assume that $\beta_n>0$ and set $\beta'=(\beta_1,\dots,\beta_{n-1})$. First note that
\begin{align*}
\Delta^{\beta}_{(T^\prime, r T_n)}(I)-\Delta^{\beta}_{T}(I)& =\Delta^{\beta'}_{T'}[ (I-rC_{T_n})^{\beta_n}(I)-(I-C_{T_n})^{\beta_n}(I)]\\
&=(1-r)\sum_{l=0}^{\beta_n-1}\Delta^{\beta'}_{T'}C_{T_n}(I-rC_{T_n})^{(\beta_n-1-l)}(I-C_{T_n})^l(I).
\end{align*}
It is now enough to show that  $\Delta^{\beta'}_{T'}(I-rC_{T_n})^k(I- C_{T_n})^{(\beta_n-1-k)}(I)\geq 0$ for all $k=0,\dots, \beta_n-1$. For $k=0$, $\Delta^{\beta'}_{T'}(I- C_{T_n})^{(\beta_n-1)}(I)= \Delta_{T}^{(\beta',\beta_n-1)}(I)\ge 0$ by the hypothesis. For $k=1$,     
\begin{align*}
    &\Delta^{\beta'}_{T'}(I-r C_{T_n})(I- C_{T_n})^{(\beta_n-2)}(I)\\
    &=\Delta^{\beta'}_{T'}[(I- C_{T_n})^{(\beta_n-2)}(I)-r C_{T_n}(I- C_{T_n})^{(\beta_n-2)}(I)]\\
    &=\Delta^{(\beta^\prime, \beta_n-2)}_{T}(I)-r C_{T_n} \Delta^{(\beta^\prime, \beta_{n}-2)}_{T}(I)\\
     &\ge  \Delta^{(\beta^\prime, \beta_n-2)}_{T}(I)- C_{T_n} \Delta^{(\beta^\prime, \beta_{n}-2)}_{T}(I)\quad (\text{as }\, \Delta^{(\beta^\prime, \beta_n-2)}_{T}(I)\ge 0 \,\text{and}\, 0<r<1)\\ 
    &=\Delta^{(\beta',\beta_n-1)}_{T}(I).
\end{align*}
This shows that $\Delta^{\beta'}_{T'}(I-r C_{T_n})(I- C_{T_n})^{(\beta_n-2)}(I)\ge \Delta^{(\beta',\beta_n-1)}_{T}(I)\ge 0 $. By a similar calculation as above one can show that 
\[
\Delta^{\beta'}_{T'}(I-r C_{T_n})^2(I- C_{T_n})^{(\beta_n-3)}(I)\ge \Delta^{\beta'}_{T'}(I-r C_{T_n})(I- C_{T_n})^{(\beta_n-2)}(I)\ge  \Delta^{(\beta',\beta_n-1)}_{T}(I)\ge 0. 
\]
Repeating this $k$-times we have the following chain of inequalities:
\begin{align*}
&\Delta^{\beta'}_{T'}(I-r C_{T_n})^k(I- C_{T_n})^{(\beta_n-1-k)}(I)\ge \Delta^{\beta'}_{T'}(I-r C_{T_n})^{k-1}(I- C_{T_n})^{(\beta_n-k)}(I)\\
&\ge \cdots \ge \Delta^{\beta'}_{T'}(I-r C_{T_n})(I- C_{T_n})^{(\beta_n-2)}(I)\ge  \Delta^{(\beta',\beta_n-1)}_{T}(I)\ge 0. 
\end{align*}
This completes the proof.
\end{proof}

Even for $\gamma\in \mathbb{R}^n_+$, we have a similar result to the above Theorem.
\begin{Theorem}
Let $T=(T_1,\ldots ,T_n)\in \clb(\clh)^n$ be an $n$-tuple of commuting contractions and  $\gamma \in \mathbb{R}^n_+$ such that  $\gamma \geq \mathbf e$. Then the following are equivalent.
\begin{enumerate}[(a)]
    \item  
$K_{\clw_{\beta}}^{-1}(T,T^*):=(I-C_{T_1})^{\beta_1}\cdots (I-C_{T_n})^{\beta_n}(I)\geq 0$ for all $\beta\in \mathbb{R}^n_+$ such that $0\leq \beta \leq  \gamma$.
    \item 
$T$ is an $\clw_{\gamma}$-hypercontraction.
\end{enumerate}
\end{Theorem}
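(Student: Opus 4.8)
The plan is to mirror the structure of Theorem~\ref{equivalence 1}, which handled the case $\gamma\in\Z_+^n$, and adapt each step so that it works for real exponents $\gamma\in\mathbb R_+^n$. The two directions will be treated separately, and the real obstacle will be that the clean telescoping and finite-sum arguments available for integer exponents must be replaced by their series analogues, where the binomial coefficients ${{\gamma-\beta+\delta-\mathbf e}\choose\delta}$ are now generalized (Gamma-function) binomials and the sums become genuinely infinite.

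For the implication $(b)\Rightarrow(a)$, I would argue exactly as in the integer case. Assuming $T$ is an $\clw_{\gamma}$-hypercontraction, Lemma~\ref{intermediate} (which is already stated for $\gamma\in\mathbb R_+^n$) gives $D_{\clw_{\beta},T}(\br)\ge 0$ for all real $\beta$ with $0\le\beta\le\gamma$ and all $\br\in(0,1)^n$. Then Proposition~\ref{Pro_Decr} lets me take the SOT limit as $\br\to\mathbf e$, yielding $D_{\clw_{\beta},T}(\mathbf e)\ge 0$. The only thing to verify carefully is that this limiting operator equals $K_{\clw_{\beta}}^{-1}(T,T^*)=(I-C_{T_1})^{\beta_1}\cdots(I-C_{T_n})^{\beta_n}(I)$; for real $\beta$ the operator $(I-C_{T_j})^{\beta_j}$ must be interpreted through the hereditary calculus applied to the power-series expansion of $(1-z_j)^{\beta_j}$, so I would note that $D_{\clw_{\beta},T}(\br)$ is precisely $\Delta^{\beta}_{\br T}(I)$ read off from this expansion, and that the limit as $\br\to\mathbf e$ recovers $\Delta^{\beta}_{T}(I)$. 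This identification, together with positivity of the limit, gives $(a)$.

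For $(a)\Rightarrow(b)$, I would again work one coordinate at a time and reduce to showing $\Delta^{\beta}_{(T',rT_n)}(I)\ge\Delta^{\beta}_{T}(I)\ge 0$ for every $r\in(0,1)$, where $T'=(T_1,\dots,T_{n-1})$. The difference $\Delta^{\beta}_{(T',rT_n)}(I)-\Delta^{\beta}_{T}(I)$ should again be written as $\Delta^{\beta'}_{T'}$ applied to $(I-rC_{T_n})^{\beta_n}(I)-(I-C_{T_n})^{\beta_n}(I)$. The essential point is to show positivity of $\Delta^{\beta'}_{T'}\big[(I-rC_{T_n})^{\beta_n}-(I-C_{T_n})^{\beta_n}\big](I)$ when $\beta_n$ is a positive real rather than an integer. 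Here I expect the main obstacle: the finite telescoping sum $\sum_{l=0}^{\beta_n-1}$ used in the integer proof is unavailable. Instead I would expand $(1-x)^{\beta_n}=\sum_{k\ge0}(-1)^k{\beta_n\choose k}x^k$ and use the fact, following from $0\le\beta_n\le\gamma_n$ and $0<r<1$, that the function $\big[(1-rx)^{\beta_n}-(1-x)^{\beta_n}\big]/(1-r)$ has nonnegative power-series coefficients after factoring out $x$ (equivalently, that $(1-rx)^{\beta_n}-(1-x)^{\beta_n}=\sum_{k\ge1}c_k(r)x^k$ with $c_k(r)\ge 0$), so that applying $\Delta^{\beta'}_{T'}C_{T_n}^{\,\ge 1}$ to positive operators preserves positivity.

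The delicate technical step is therefore to establish that $(1-rx)^{\beta_n}-(1-x)^{\beta_n}$ has nonnegative Taylor coefficients for $0<r<1$ and $\beta_n>0$, and to justify the termwise application of the hereditary calculus and the convergence of the resulting operator series in the strong operator topology; convergence here should follow from property ($\textbf{P}3'$) of $k_{\clw_\beta}$ established in Example~\ref{Exam_1} via Proposition~\ref{Prop_Pro}, together with Proposition~\ref{Pro_Decr}. Once nonnegativity of these coefficients is in hand, positivity of each intermediate operator follows by induction on the multi-index $\beta'$ exactly as in the integer case, since $\Delta^{\beta'}_{T'}(I)\ge 0$ is part of the hypothesis and the completely positive maps $C_{T_j}$ preserve positivity. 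Assembling the coordinatewise inequalities then yields $\Delta^{\beta}_{\br T}(I)\ge 0$ for all $\br\in(0,1)^n$ and all $0\le\beta\le\gamma$, which is precisely the statement that $T$ is an $\clw_{\gamma}$-hypercontraction, completing $(a)\Rightarrow(b)$.
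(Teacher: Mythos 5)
Your direction $(b)\Rightarrow(a)$ is fine and matches the paper: Lemma on intermediate positivity plus the limit $\br\to\mathbf e$ via the monotonicity proposition. The gap is in $(a)\Rightarrow(b)$, and it is fatal to the argument as written: the key technical claim, that $(1-rx)^{\beta_n}-(1-x)^{\beta_n}$ has nonnegative Taylor coefficients for every real $\beta_n>0$ and $0<r<1$, is false whenever $\beta_n>1$ --- even for integers. Indeed,
\[
(1-rx)^{\beta}-(1-x)^{\beta}=\sum_{k\geq 1}(-1)^{k+1}\binom{\beta}{k}\,(1-r^k)\,x^k ,
\]
so for $\beta=2$ the coefficient of $x^2$ is $-(1-r^2)<0$, and for $\beta=3/2$ it is $\tfrac{3}{8}(r^2-1)<0$. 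Nonnegativity of all coefficients holds only for $0<\beta\le 1$, where $(1-x)^{\beta}=1-\sum_{k\ge1}b_kx^k$ with $b_k\ge0$. Consequently your plan to write the difference $\Delta^{\beta'}_{T'}\big[(I-rC_{T_n})^{\beta_n}-(I-C_{T_n})^{\beta_n}\big](I)$ as a nonnegative combination of the positive operators $C_{T_n}^k\Delta^{\beta'}_{T'}(I)$ collapses at the first nontrivial exponent. (Note also that this positive-combination mechanism is the only way your sketch produces positivity, since $\Delta^{\beta'}_{T'}$ itself is not a positivity-preserving map; positivity must always be routed through the hypothesis $\Delta^{\beta}_{T}(I)\ge 0$, $0\le\beta\le\gamma$.)

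The paper's proof circumvents exactly this obstruction by splitting $\beta_n=[\beta_n]+\delta$ with $\delta\in(0,1)$. The integer part $[\beta_n]$ is handled by the step-by-step replacement argument of the integer theorem: one proves $\Delta^{(\beta',k)}_{(T',rT_n)}(I)\ge\Delta^{(\beta',k)}_{T}(I)\ge0$ iteratively in $k$, each step replacing one factor $(I-rC_{T_n})$ by $(I-C_{T_n})$ and using the positivity established at the previous step together with hypothesis $(a)$ --- never a global nonnegative-coefficient expansion. Only for the fractional exponent $\delta$ does the paper invoke the series $(1-x)^{\delta}=1-\sum_{k\ge1}b_kx^k$, $b_k\ge0$, which is legitimate precisely because $0<\delta<1$; this yields $\Delta^{(\beta',\beta_n)}_{(T',rT_n)}(I)\ge \Delta^{(\beta',[\beta_n])}_{(T',rT_n)}(I-C_{T_n})^{\delta}(I)$, and a final chain of inequalities (again replacing factors one at a time, each replacement justified by positivity from hypothesis $(a)$) brings this down to $\Delta^{\beta}_{T}(I)\ge0$. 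If you restructure your argument along these lines --- integer part by iteration, fractional part by the $(0,1)$-exponent expansion --- the proof goes through; as it stands, the claimed coefficient positivity is the missing (and irreparable) step.
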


\begin{proof}
If $T$ is an $\clw_{\gamma}$-hypercontraction then by 
Lemma~\ref{intermediate}, it follows that 
$D_{\clw_{\beta},T}(\br)\ge 0$ for all $\beta\in \mathbb{R}^n_+$ such that $0\leq \beta \leq  \gamma$. Then by taking limit as $\br\to \mathbf e$ and using  Proposition~\ref{Pro_Decr}, we conclude that $D_{\clw_{\beta},T}(\mathbf e)=K_{\clw_{\beta}}^{-1}(T,T^*)\ge 0$ for all $0\le \beta\le \gamma$. This proves $(b)\Rightarrow (a)$.

To prove $(a)\Rightarrow (b)$, it is enough to prove that for $0\le \beta\le\gamma$ and $r\in (0,1)$, $\Delta_{(T',rT_n)}^{(\beta^\prime, \beta_n)}(I)\ge 0$, where $\beta=(\beta_1,\dots,\beta_{n})$ and $\beta'=(\beta_1,\dots,\beta_{n-1})$.
Let $r\in (0,1)$ be fixed. If $\beta_n=0$, then $\Delta_{(T',rT_n)}^{(\beta^\prime, \beta_n)}(I)=\Delta_{T'}^{\beta^\prime}(I)\ge 0$. Assume that $\beta_n>0$. First we consider the case when $\beta_n\in \Z_+$. If $1\le \beta_n$, then using the fact $\Delta^{\beta'}_{T'}(I)\ge 0$, we have 
\[
\Delta_{(T',rT_n)}^{(\beta^\prime, 1)}(I)=\Delta_{T'}^{\beta^\prime}(I-rC_{T_n})(I)\ge \Delta_{T'}^{\beta^\prime}(I-C_{T_n})(I)=\Delta_{T}^{(\beta',1)}(I)\ge 0.
\]
Similarly if $2\le \beta_n$, then using $\Delta_{(T',rT_n)}^{(\beta^\prime, 1)}(I)\ge 0$, 
\[
\Delta_{(T',rT_n)}^{(\beta^\prime, 2)}(I)=\Delta_{(T',rT_n)}^{(\beta^\prime,1)}(I-rC_{T_n})(I)\ge \Delta_{(T',rT_n)}^{(\beta^\prime,1)}(I-C_{T_n})(I)=\Delta_{T}^{(\beta',1)}(1-rC_{T_n})(I)\ge \Delta^{(\beta',2)}_T(I)\ge 0.
\]
By repeating this method sufficient number of times we conclude that $\Delta_{(T',rT_n)}^{(\beta^\prime, \beta_n)}(I)\ge 0$ if $\beta_n$ is an integer. Next we suppose that $\beta_n\notin \Z_+$.  
Let $[\beta_n]$ be the largest integer which is less than or equal to $\beta_n$. We set $\delta=\beta_n-[\beta_n]$. Since $0< \delta <1$, observe that $(1-x)^{\delta}=1- \sum_{k=1}^{\infty} b_k x^k$, where $b_k\ge 0$ for all $k\ge 1$.  
Now using $\Delta_{(T',rT_n)}^{(\beta^\prime, [\beta_n])}(I)\ge 0$,
\begin{align*}
  &\Delta_{(T',rT_n)}^{(\beta^\prime, \beta_n)}(I)\\
  &=\Delta^{\beta^\prime}_{T^\prime}(I-r C_{T_n})^{\beta_n}(I)\\
  &= \Delta^{\beta^\prime}_{T^\prime}(I-r C_{T_n})^{[\beta_n]}(I-r C_{T_n})^{\delta}(I)\\
  &=\Delta_{(T',rT_n)}^{(\beta^\prime, [\beta_n])}(I)- \sum_{k=1}^{\infty}b_k r^k C_{T_n}^k\Delta_{(T',rT_n)}^{(\beta^\prime, [\beta_n])}(I)\\
  &\ge  \Delta_{(T',rT_n)}^{(\beta^\prime, [\beta_n])}(I)- \sum_{k=1}^{\infty}b_k C_{T_n}^k\Delta_{(T',rT_n)}^{(\beta^\prime, [\beta_n])}(I)\\
  & = \Delta_{(T',rT_n)}^{(\beta^\prime, [\beta_n])}(I- C_{T_n})^{\delta}(I).
\end{align*}
To complete the proof of the theorem, we apply the similar strategy as in the proof of the above theorem to get a chain of inequalities. That is,
\begin{align*}
 &\Delta^{\beta^\prime}_{T^\prime}(I- C_{T_n})^{\delta}(I-r C_{T_n})^{[\beta_n]}(I)\geq \Delta^{\beta^\prime}_{T^\prime}(I- C_{T_n})^{\delta+1}(I-r C_{T_n})^{[\beta_n]-1}(I)\\
 &\geq \cdots \geq \Delta^{\beta^\prime}_{T^\prime}(I- C_{T_n})^{\delta+[\beta_n]-1}(I-r C_{T_n})(I) \geq \Delta^{\beta^\prime}_{T^\prime}(I- C_{T_n})^{\delta+[\beta_n]}(I)= \Delta^{\beta}_{T}(I)\geq0.   
\end{align*}
This completes the proof.
\end{proof}

We end this section with a few examples of $\clw$-hypercontractions.
\begin{Examples} \label{examples}
\textup{$(i)$} We say $T=(T_1,\dots,T_n)$ is a Szeg\"{o} tuple on $\clh$ if $T$ is a commuting tuple of contractions on $\clh$ such that $\mathbb{S}_n^{-1}(T,T^*)\ge 0$ where $\mathbb{S}_n$ is the Szeg\"{o} kernel of the Hardy space over $\D^n$. 
Moreover, we say $T$ is a Brehmer tuple (\cite{Breh}) if $T_{\Lambda}$ is a Szeg\"{o} tuple for any non-empty subset $\Lambda$ of $I$. If $T$ is a Brehmer tuple then, by Theorem~\ref{equivalence 1}, $T$ is an $\clw$-hypercontraction for the multi-weight sequence $\clw=(\mathds{1},\dots,\mathds{1})$.

\textup{(ii)} Let $T=(T_1,\dots,T_n)$ be an $n$-tuple of commuting co-isometries on $\clh$ and $\clw=(\omega_1,\ldots,\omega_n)$ be a multi-weight sequence. For any  $\clw'\in S(\clw)$, we have 
 \begin{align*}
    D_{\clw',T}(\br)= \sum_{\alpha\in \mathbb{Z}^{n}_{+}}c^\prime_\alpha \br^{\alpha}T^{\alpha}T^{*\alpha} = \frac{1}{k_{\clw^\prime}(\br)}I_{\clh}  \quad \quad ( \br \in (0,1)^n ).
\end{align*}
 Since, $k_{\clw^\prime}(\br) > 0 $ for any $\clw'\in S(\clw)$, so $T$ is an $\clw$-hypercontraction for any multi-weight sequence $\clw$.
 

\textup{(iii)} Let $R= (R_1,\ldots,R_n)$ be an $n$-tuple of commuting contractions on $\clh$ such that $R_1$ is an co-isometry and $R^{'}= (R_2,\ldots,R_n)$ is an $\clw'$-hypercontraction. Then for any weight sequence $\omega_1$, consider the multi-weight sequence $\clw=(\omega_1,\clw')$. Let $\clw''=(\omega_1,\tilde{\clw})\in S(\clw) $. Then clearly $\tilde{\clw}\in S(\clw')$ and observe that for $\br\in(0,1)^n$,
\begin{align*}
    D_{\clw'',R}(\br)
    &=\sum_{\alpha \in \Z_+^{n}}  
    c^{(1)}_{\alpha_1}\cdots c^{(n)}_{\alpha_n} \br^{\alpha} R_1^{\alpha_1} 
    R_{2}^{\alpha_2} \cdots R_{n}^{\alpha_n} R_1^{* \alpha_1}R_{2}^{* \alpha_2} \cdots R_{n}^{* \alpha_n} \\
    &=\sum_{\alpha \in \Z_+^{n}}  
    c^{(1)}_{\alpha_1}\cdots c^{(n)}_{\alpha_n} \br^{\alpha}  
    R_{2}^{\alpha_2} \cdots R_{n}^{\alpha_n}
    R_1^{\alpha_1}R_1^{* \alpha_1}R_{2}^{* \alpha_2} \cdots R_{n}^{* n} \\
    &= \big( \sum_{\alpha_1\geq0} c^{(1)}_{\alpha_1} r_{1}^{\alpha_1} \big) 
    D_{\tilde{\clw},R^{'}}(\br^{'})\\
    &= \frac{1}{k_{\omega_1}(r_1)}D_{\tilde{\clw},R^{'}}(\br^{'})\ge 0,
\end{align*}
where  $\br^{'}= (r_2,\ldots,r_{n})$. Thus  $R=(R_1,\ldots,R_n)$ is an $\clw$-hypercontraction on $\clh$.  

\textup{(iv)} 
Let $M_{\z}=(M_{z_1}, \ldots, M_{z_n})$ be the $n$-tuple of multi-shifts on $A^2_{\clw}(\cle)$ for some multi-weight sequence $\clw$, that is
\[M_{z_i}f(\w)=w_i f(\w) \quad (i=1,\ldots,n, \quad \text{and}\quad \w \in \D^n).\]
Note that for $f(z)=\sum_{\alpha\in \mathbb{Z}^n_{+}}a_{\alpha}\z^{\alpha}$ and $\beta=(\beta_1, \ldots, \beta_n)\in \mathbb{Z}_+^n$,  
\[
M^{* \beta}_{\z}f(\w)=\sum_{\alpha \in\Z_+^n} \frac{\omega_{\alpha_1+\beta_1}^{(1)}\cdots\omega_{\alpha_n+\beta_n}^{(n)}}{\omega_{\alpha_1}^{(1)}\cdots\omega_{\alpha_n}^{(n)}} a_{\alpha +\beta} \w^\alpha.\]
Then
\[
 \|M^{* \beta}_{\z}f\|^2=\sum_{\alpha \in\Z_+^n} \frac{\omega_{\alpha_1+\beta_1}^{{(1)}^2}\cdots\omega_{\alpha_n+\beta_n}^{{(n)}^2}}{\omega_{\alpha_1}^{(1)}\cdots\omega_{\alpha_n}^{(n)}} \|a_{\alpha+\beta}\|^2.\]
Now, for $\bm{r}=(r_1,\ldots,r_n)\in (0,1)^n$, 
\begin{align*}
    \langle  D_{\clw,M_{\z}}(\bm{r}) f, f \rangle&=\sum_{\beta\in \mathbb{Z}^n_{+}}c_\beta \bm{r}^{\beta} \|M_{\z}^{* \beta}f\|^2\\
    &=\sum_{\beta \in \mathbb{Z}^n_{+}}c_\beta \bm{r}^{\beta}( \sum_{\alpha \in\Z_+^n} \frac{\omega_{\alpha_1+\beta_1}^{{(1)}^2}\cdots\omega_{\alpha_n+\beta_n}^{{(n)}^2}}{\omega_{\alpha_1}^{(1)}\cdots\omega_{\alpha_n}^{(n)}} \|a_{\alpha+\beta}\|^2)\\
    &=\sum_{\alpha \in \mathbb{Z}^n_{+}} \omega_{\alpha_1}^{{(1)}^2}\cdots\omega_{\alpha_n}^{{(n)}^2}( \sum^\alpha_{\beta \geq 0} \frac{c_\beta \bm{r}^{\beta}}{\omega_{\alpha_1-\beta_1}^{(1)}\cdots\omega_{\alpha_n-\beta_n}^{(n)}}) \|a_{\alpha}\|^2\\
    &=\sum_{\alpha \in \mathbb{Z}^n_{+}} \omega_{\alpha_1}^{{(1)}^2}\cdots\omega_{\alpha_n}^{{(n)}^2}a_\alpha(\mathbf e, \bm{r})\|a_{\alpha}\|^2,
\end{align*}
where for the last equality we use identities as in ~\ref{asr}. By the property ($\textbf{P}1'$), $a_\alpha(\mathbf e, \bm{r})\geq 0$ for all $\bm{r}\in (0,1)^n$ and therefore,  $D_{\clw,M_{\z}}(\bm{r})\geq 0$  for all $\bm{r}\in (0,1)^n$. 
Thus the $n$-tuple of multi-shifts $M_{\z}$ on $A^2_{\clw}(\cle)$ is an $\clw$-hypercontraction. In fact, $M_z$ is a pure $\clw$-hypercontraction. To see this first observe that the space $\text{span}\{\z^\alpha h: \alpha\in \Z_+^n, h\in \cle\}$ is dense in $A^2_{\clw}(\cle)$. For fixed $\alpha\in\Z_+^n$ and $ h\in \cle$, if we consider $k\in\Z_+$ such that $k>|\alpha|$, then $M_{z_i}^{* k}(\z^\alpha h)=0$. Finally, since $\{M_{z_i}^{*k}\}_{k\ge 1}$ is uniformly bounded we get that $M_{z_i}^{* k}\to 0$ in the strong operator topology. This shows that $M_{z_i}$ is a pure contraction for all $i=1,\dots,n$. 
\end{Examples}

\newsection{Model For $\clw$-hypercontractions}\label{Sec4}

The main purpose of this section is to find dilations of $\clw$-hypercontractions. This multi-variate dilation is obtained using one-variable dilation at a time. The key to use one variable dilation theory is a commutant lifting result which we describe first. Let $T=(T_1,\dots,T_n)$ be an $\clw$-hypercontraction on $\clh$ corresponding to a multi-weight sequence $\clw=(\omega_1,\dots,\omega_n)$. Then considering the subset $\Lambda=\{i\}$ of $I$ and using Proposition~\ref{subtuple} we have that each $T_i$ is an $\omega_i$-hypercontraction for all $i=1,\dots,n$. Thus $D_{\omega_i, T_i}(1):=\text{SOT}-\lim_{r\to 1}D_{\omega_i,T_i}(r)\ge 0$ for all $i=1,\dots,n$, and recall that the corresponding defect operator and defect space are 
\[
D_{\omega_i, T_i}= (D_{\omega_i, T_i}(1))^{1/2}\ \text{ and } \cld_{\omega_i,T_i} = \overline{ran}\, D_{\omega_i,T_i},
\]
respectively. In what follows, we denote by $\hat{\clw_i}$ $(1\le i\le n)$ the multi-weight sequence obtained from $\clw=(\omega_1,\dots,\omega_n)$ by deleting $\omega_i$, that is, 
\[
\hat{\clw_i}:=(\omega_1,\dots,\omega_{i-1},\omega_{i+1},\dots,\omega_n).
\]


\begin{Proposition}\label{n-1 tuple lift}
Let $\clw$ be a multi-weight sequence and let $ T=(T_1,\ldots ,T_n)$ be an $\clw$-hypercontraction on $\clh$. 
Suppose $(M_z \oplus U_{T_1})$ on $A^2_{\omega_1}(\cld_{\omega_1,T_1}) \oplus \clq_{T_1}$ is the dilation of $T_1$ with the dilation map 
$\Pi_{\omega_1,T_1}: \clh \to A^2_{\omega_1}(\cld_{\omega_1,T_1}) \oplus \clq_{T_1}$ as obtained in Theorem~\ref{olo}. Then there exists an $\hat{\clw_1}$-hypercontraction 
$V=(V_2,\ldots,V_n)$ on $ A^2_{\omega_1}(\cld_{\omega_1,T_1}) \oplus \clq_{T_1}$ such that  
\[ \Pi_{\omega_1,T_1} T_i^* = V_i^* \Pi_{\omega_1,T_1}\ 
\text{ and }\
V_i(M_z \oplus U_{T_1})=(M_z \oplus U_{T_1})V_i,
\quad \quad (i=2,\ldots,n)\]  
where
$V_i= (I_{A^2_{\omega_1}} \otimes A_i) \oplus X_i$ ($i=2,\ldots,n$) for some commuting operator tuples $(A_2,\ldots,A_n)$ on $\cld_{\omega_1,T_1}$ and 
$(X_2,\ldots,X_n)$ on $\clq_{T_1}$.
\end{Proposition}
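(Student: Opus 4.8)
The plan is to manufacture the two commuting tuples $(A_2,\dots,A_n)$ on $\cld_{\omega_1,T_1}$ and $(X_2,\dots,X_n)$ on $\clq_{T_1}$ separately, and then set $V_i:=(I_{A^2_{\omega_1}}\otimes A_i)\oplus X_i$. Since the dilation map splits as $\Pi_{\omega_1,T_1}=(\pi_{\omega_1,T_1},Q_{T_1})$, the required intertwining $\Pi_{\omega_1,T_1}T_i^*=V_i^*\Pi_{\omega_1,T_1}$ is equivalent to the two relations $\pi_{\omega_1,T_1}T_i^*=(I\otimes A_i^*)\pi_{\omega_1,T_1}$ and $Q_{T_1}T_i^*=X_i^*Q_{T_1}$. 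Reading off the Taylor coefficients of $\pi_{\omega_1,T_1}$ from \eqref{pi_omega} and using that $T_i^*$ commutes with each $T_1^{*k}$, the first relation forces me to define $A_i^*$ on the dense range of $D_{\omega_1,T_1}$ by $A_i^*(D_{\omega_1,T_1}g)=D_{\omega_1,T_1}T_i^*g$; likewise the second relation dictates $X_i^*(Q_{T_1}h)=Q_{T_1}T_i^*h$ on $\operatorname{ran}Q_{T_1}$.

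The first technical point is that these prescriptions actually define contractions. For $X_i^*$ this is immediate, since $\|Q_{T_1}T_i^*h\|^2=\lim_k\|T_1^{*k}T_i^*h\|^2=\lim_k\|T_i^*T_1^{*k}h\|^2\le\lim_k\|T_1^{*k}h\|^2=\|Q_{T_1}h\|^2$, using commutativity and that $T_i$ is a contraction. For $A_i^*$ I would first record the key inequality $T_iD_{\omega_1,T_1}(1)T_i^*\le D_{\omega_1,T_1}(1)$. By Proposition~\ref{subtuple} the pair $(T_1,T_i)$ is an $(\omega_1,\omega_i)$-hypercontraction, so in particular $D_{(\omega_1,\mathds 1),(T_1,T_i)}(r_1,r_i)\ge 0$ for all $(r_1,r_i)\in(0,1)^2$; expanding the reciprocal kernel $\tfrac{1}{k_{\omega_1}(z_1)}(1-z_i)$ through Agler's hereditary functional calculus gives $D_{(\omega_1,\mathds 1),(T_1,T_i)}(r_1,r_i)=D_{\omega_1,T_1}(r_1)-r_iT_iD_{\omega_1,T_1}(r_1)T_i^*$, and letting $r_1\to 1$ (Proposition~\ref{Pro_Decr}) and then $r_i\to 1$ yields the claim. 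Hence $\|A_i^*D_{\omega_1,T_1}g\|^2=\langle T_iD_{\omega_1,T_1}(1)T_i^*g,g\rangle\le\|D_{\omega_1,T_1}g\|^2$, so $A_i^*$ extends to a contraction on $\cld_{\omega_1,T_1}$.

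Granting these definitions, the structural properties are routine checks on dense ranges. Commutativity of $(A_2,\dots,A_n)$ and of $(X_2,\dots,X_n)$ follows from that of $(T_2^*,\dots,T_n^*)$; the identity $U_{T_1}^*X_i^*=X_i^*U_{T_1}^*$ (hence $X_iU_{T_1}=U_{T_1}X_i$) follows from $U_{T_1}^*Q_{T_1}=Q_{T_1}T_1^*$ together with the definitions, while $I\otimes A_i$ commutes with $M_z=M_z\otimes I$ by construction, so each $V_i$ commutes with $M_z\oplus U_{T_1}$; and the intertwining $\Pi_{\omega_1,T_1}T_i^*=V_i^*\Pi_{\omega_1,T_1}$ holds by the very choice of $A_i^*,X_i^*$.

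The main work is to show $V=(V_2,\dots,V_n)$ is an $\hat{\clw_1}$-hypercontraction. As $V_i$ is block diagonal, for every $\clw'\in S(\hat{\clw_1})$ and $\br\in(0,1)^{n-1}$ one has $D_{\clw',V}(\br)=\big(I_{A^2_{\omega_1}}\otimes D_{\clw',A}(\br)\big)\oplus D_{\clw',X}(\br)$, so it suffices to prove $D_{\clw',A}(\br)\ge 0$ and $D_{\clw',X}(\br)\ge 0$. Iterating the defining relations gives $A^{*\alpha}D_{\omega_1,T_1}g=D_{\omega_1,T_1}{T'}^{*\alpha}g$ and $X^{*\alpha}Q_{T_1}h=Q_{T_1}{T'}^{*\alpha}h$ for $T'=(T_2,\dots,T_n)$. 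For the $A$-part, testing on $D_{\omega_1,T_1}g$ and unwinding $D_{\omega_1,T_1}(1)$ as $\lim_{r_1\to 1}\sum_k c^{(1)}_kr_1^kT_1^kT_1^{*k}$, I obtain $\langle D_{\clw',A}(\br)D_{\omega_1,T_1}g,D_{\omega_1,T_1}g\rangle=\lim_{r_1\to 1}\langle D_{(\omega_1,\clw'),T}(r_1,\br)g,g\rangle\ge 0$, because $(\omega_1,\clw')\in S(\clw)$ and $T$ is an $\clw$-hypercontraction. For the $X$-part, testing on $Q_{T_1}h$ gives $\langle D_{\clw',X}(\br)Q_{T_1}h,Q_{T_1}h\rangle=\lim_{k\to\infty}\langle D_{\clw',T'}(\br)T_1^{*k}h,T_1^{*k}h\rangle\ge 0$, since $T'$ is a $\hat{\clw_1}$-hypercontraction (Proposition~\ref{subtuple}) and $\clw'\in S(\hat{\clw_1})$. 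Both computations hinge on interchanging the sum over $\alpha$ with the limit in $r_1$ (resp.\ in $k$), which I would justify by the absolute summability $\sum_\alpha|c'_\alpha|\br^\alpha<\infty$ (valid for $\br\in(0,1)^{n-1}$ by analyticity of $1/k_{\clw'}$) together with the uniform bound $\|T_1^{*k}{T'}^{*\alpha}g\|\le\|g\|$, via dominated convergence. I expect this last step, which leans on the full strength of $\clw$-hypercontractivity over all of $S(\clw)$ to produce the positivity of $D_{(\omega_1,\clw'),T}(r_1,\br)$, to be the crux; everything else is bookkeeping once $A_i^*$ and $X_i^*$ are shown to be contractions.
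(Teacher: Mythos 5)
Your proposal is correct and takes essentially the same route as the paper: you construct $A_i$ and $X_i$ from the intertwining relations $A_i^*D_{\omega_1,T_1}=D_{\omega_1,T_1}T_i^*$ and $X_i^*Q_{T_1}=Q_{T_1}T_i^*$ (the paper obtains exactly these from Douglas factorization applied to the same positivity inequalities $T_iD_{\omega_1,T_1}(1)T_i^*\le D_{\omega_1,T_1}(1)$ and $T_iQ_{T_1}^2T_i^*\le Q_{T_1}^2$ that you verify), and your test-vector computations proving $\hat{\clw_1}$-hypercontractivity of $A$ and $X$ --- reducing to $\lim_{r_1\to1}\langle D_{(\omega_1,\clw'),T}(r_1,\br)g,g\rangle\ge0$ and $\lim_{k\to\infty}\langle D_{\clw',T'}(\br)T_1^{*k}h,T_1^{*k}h\rangle\ge0$ --- are the paper's computations verbatim. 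The only deviations are cosmetic: you define the operators directly on dense ranges instead of quoting Douglas's lemma, treat all $\clw'\in S(\hat{\clw_1})$ uniformly where the paper does one case and says the rest are similar, and make explicit the dominated-convergence interchanges that the paper leaves implicit.
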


\textit{Proof:} 
Let $2\le i\le n$ and consider the subset $\Lambda=\{1,i\}$ of $I$. Since $T_{\Lambda}$ is a $\clw_{\Lambda}$-hypercontraction, then for the multi-weight sequence $\clw'=(\omega_1, \mathds{1})\in S(\clw_{\Lambda})$, we have $D_{\clw',T_{\Lambda}}(1,1)\ge 0$, that is, $D_{\omega_1,T_1}(1)-T_iD_{\omega_1,T_1}(1)T^*_i \geq 0$. 
Applying Douglas factorization lemma (\cite{D}) to the above inequality we have a contraction $A_i$ on $\cld_{\omega_1,T_1}$ such that 
\begin{equation}\label{intertwining A}
D_{\omega_1,T_1}T^*_i=A^*_i D_{\omega_1,T_1}.
\end{equation}
Thus, we get an $(n-1)$-tuple of commuting contractions $A=(A_2, \ldots , A_n)$ on $\cld_{\omega_1,T_1}$. We now show that $A$ is an $\hat{\clw_1}$-hypercontraction. To this end, we only consider the multi-weight sequence $\hat{\clw_1}$ and show that $D_{\hat{\clw_1},A}(\br)\ge 0$ for all $\br\in (0,1)^{n-1}$. As the required positivity corresponding to other multi-weight sequences in $S(\hat{\clw_1})$ can be shown similarly.  
For any $\br\in (0,1)^{n-1}$ and $h\in \clh$, we have 
\begin{align*}
\langle D_{\hat{\clw_1},A}(\br) D_{\omega_1,T_1}h,
D_{\omega_1,T_1}h\rangle
=&\sum_{\alpha=(\alpha_2,\dots,\alpha_n)\in\Z_{+}^{(n-1)}}
\br^{\alpha}c^{(2)}_{\alpha_2}\cdots c^{(n)}_{\alpha_n} 
\langle D_{\omega_1,T_1} A^{\alpha}
A^{* \alpha}D_{\omega_1,T_1}h,h \rangle\\
=& \sum_{\alpha=(\alpha_2,\dots,\alpha_n)\in\Z_{+}^{(n-1)}}
\br^{\alpha}c^{(2)}_{\alpha_2}\cdots c^{(n)}_{\alpha_n} 
\langle T^{\alpha}D_{\omega_1,T_1}(1) T^{* \alpha}h,h \rangle\\ 
=& \sum_{\alpha=(\alpha_2,\dots,\alpha_n)\in\Z_{+}^{(n-1)}}
\br^{\alpha}c^{(2)}_{\alpha_2}\cdots c^{(n)}_{\alpha_n} 
\lim_{s\to 1}\langle T^{\alpha}D_{\omega_1,T_1}(s) T^{* \alpha}h,h \rangle\\
=& \lim_{s\to 1}\sum_{\alpha\in\Z_{+}^{n}} (\br,s)^{\alpha}c_{\alpha} \langle T^{\alpha} T^{* \alpha}h,h\rangle\\
=&\lim_{s\to 1}\langle D_{\clw,T}(\br,s) h,h\rangle\ge 0.
\end{align*}
Here the positivity in the last equality follows from Remark~\ref{split limit}. 
This proves that $A$ is an $\hat{\clw_1}$-hypercontraction.

On the other hand, recall from the construction of dilation in Theorem~\ref{olo} that $Q^2_{T_1}= \text{SOT}- \lim_{n\to \infty} T_1^nT_1^{*n}$, $\clq_{T_1}=\overline{ran} Q_{T_1}$ and the co-isometry $U_{T_1}$ on $\clq_{T_1}$ is defined by the identity 
$U_{T_1}^*Q_{T_1}h=Q_{T_1}T_1^*h$ for all $h\in\clh$. Now for any $2\le i\le n$, since $T_i $ is a contraction we have 
 $T_iQ^2_{T_1}T^*_i\leq Q^2_{T_1}$. Again applying Douglas factorization lemma to the inequality we get a contraction $X_i$ on $\clq_{T_1}$ such that 
\begin{equation}\label{intertwining X}
X^*_iQ_{T_1}=Q_{T_1}T^*_i \quad (i=2,\ldots,n).
\end{equation}
It is now easy to see that $(U_{T_1}, X_2, \ldots , X_n)$ is an $n$-tuple of commuting contractions on $\clq_{T_1}$. As before, we show that $X=(X_2,\dots,X_n)$ is an $\hat{\clw_1}$-hypercontraction and for this we only show that $D_{\hat{\clw_1},X}(\br)\ge 0$ for all $\br\in (0,1)^{n-1}$ as the argument for other multi-weight sequences is similar.  
Now for $h\in \clh$ and $\br\in(0,1)^{n-1}$,  
\begin{align*}
 \langle D_{\hat{\clw_1},X}(\br)Q_{T_1}h,Q_{T_1}h \rangle 
 &=\sum_{\alpha=(\alpha_2,\dots,\alpha_n)\in\Z_{+}^{(n-1)}} \br^{\alpha} c^{(2)}_{\alpha_2} \cdots c^{(n)}_{\alpha_n}\langle T^{\alpha}(\lim_{k \to \infty}T^k_1T^{* k}_1)T^{*\alpha}  h,h \rangle\\
&=\lim_{k\to \infty}\sum_{\alpha=(\alpha_2,\dots,\alpha_n)\in\Z_{+}^{(n-1)}} \br^{\alpha} c^{(2)}_{\alpha_2} \cdots c^{(n)}_{\alpha_n}\langle T^{\alpha} T^{*\alpha}  T^{*k}_1 h,T^{* k}_1h \rangle\\
&= \lim_{k\to\infty}\langle D_{\hat{\clw_1},T}(\br)T_1^{* k}h,T_{1}^{* k}h \rangle\ge 0.
\end{align*}
Thus $X $ is an
$\hat{\clw_1}$-hypercontraction. 
We set  
\[ V_i:=(I_{A^2_{\omega_1}}\otimes A_i)\oplus X_i \in  \clb \big(A^2_{\omega_1}(\cld_{\omega_1,T_1})\oplus \mathcal{Q}_{T_1} \big)\quad (i=2,\dots,n).\] 
Then it is evident that $V=(V_2, \ldots , V_n)$ is an $\hat{\clw_1}$-hypercontraction. 
It remains to verify that $V$ satisfies the required commuting and intertwining relations. For any $h \in \clh$ and $i = 2, \ldots , n$,
\begin{align*}
\Pi_{\omega_1,T_1} T_i^* h  &= \big( \sum_{k \geq 0} z^k \frac{1}{\omega_k} D_{\omega_1,T_1} T_1^{*k} T_i^*h , Q_{T_1}T_i^*h \big)  \\
&= \big( \sum_{k \geq 0} z^k \frac{1}{\omega_k} A_i^* D_{\omega_1,T_1} T_1^{*k} h , X_i^*Q_{T_1}h \big)  \\ 
&= \big( (I_{A^2_{\omega_1}} \otimes A_i^*) \sum_{k \geq 0} z^k \frac{1}{\omega_k}D_{\omega_1,T_1} T_1^{*k} h , X_i^*Q_{T_1}h \big)  \\
&= V_i^* \Pi_{\omega_1, T_1} h.
\end{align*}
Thus $\Pi_{\omega_1,T_1}T_i^* = V_i^* \Pi_{\omega_1,T_1}$ for all $i = 2, \ldots , n$. Finally, as each $X_i$ commutes with $U_{T_1}$ it follows that 
$M_z \oplus U_{T_1}$ commutes with each $V_i$. This completes the proof.  \qed

\begin{Remark}\label{pure}
If $T_1$ is a pure contraction in the above proposition, then $Q_{T_1}=0$ and therefore $X_i=0$ for all $i=2,\dots,n$. Thus, in this case, the $\hat{\clw_1}$-hypercontraction $V=(V_2, \ldots , V_n)$ will be of the form 
$V_i = I_{A^2_{\omega_1}} \otimes A_i$ for all $i = 2, \ldots , n$.
\end{Remark}
The following lemma is needed to prove the general dilation result below. 
\begin{Lemma}\label{useful lemma}
Let $T, X,A$ be as in Proposition~\ref{n-1 tuple lift} and let $\Lambda\subseteq \{2,\dots,n\}$. Then \begin{enumerate}
 \item[\textup{(i)}] $D_{\omega_1,T_1}D_{\clw_{\Lambda},A_{\Lambda}}(1,\dots,1)D_{\omega_1,T_1}=D_{\clw_{\Lambda\cup\{1\}}, T_{\Lambda\cup\{1\}}}(1,\dots,1)$.
 \item[\textup{(ii)}]
 $Q_{T_1}D_{\clw_{\Lambda},X_{\Lambda}}(1,\dots,1)Q_{T_1}=
 \text{SOT}-\lim_{k \to \infty}T_1^k D_{\clw_{\Lambda}, T_{\Lambda}}(1,\dots,1)T_1^{* k}$.
\end{enumerate}
\end{Lemma}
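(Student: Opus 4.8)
The plan is to prove (i) and (ii) by the same two-stage scheme: first establish the asserted identities with the boundary value $(1,\dots,1)$ replaced by an interior point $\br$, where everything converges in operator norm, and only then let $\br\to\mathbf e$. The engine in both cases is the intertwining relations \eqref{intertwining A} and \eqref{intertwining X} produced in Proposition~\ref{n-1 tuple lift}.

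For (i), from \eqref{intertwining A} we have $D_{\omega_1,T_1}T_i^*=A_i^*D_{\omega_1,T_1}$, and taking adjoints (recall $D_{\omega_1,T_1}$ is self-adjoint) $T_iD_{\omega_1,T_1}=D_{\omega_1,T_1}A_i$. Since the $T_i$ commute and the $A_i$ commute, for every multi-index $\alpha$ supported on $\Lambda$ these iterate to $D_{\omega_1,T_1}A_\Lambda^\alpha=T_\Lambda^\alpha D_{\omega_1,T_1}$ and $A_\Lambda^{*\alpha}D_{\omega_1,T_1}=D_{\omega_1,T_1}T_\Lambda^{*\alpha}$, whence
\[
D_{\omega_1,T_1}\,A_\Lambda^\alpha A_\Lambda^{*\alpha}\,D_{\omega_1,T_1}=T_\Lambda^\alpha\,D_{\omega_1,T_1}^2\,T_\Lambda^{*\alpha}=T_\Lambda^\alpha\,D_{\omega_1,T_1}(1)\,T_\Lambda^{*\alpha}.
\]
Since each $1/k_{\omega_\lambda}$ is analytic on $\D$, the coefficients of $1/k_{\clw_\Lambda}$ satisfy $\sum_\alpha|c^{(\Lambda)}_\alpha|\br^\alpha<\infty$ for $\br\in(0,1)^{|\Lambda|}$, so all series below converge in operator norm and may be manipulated termwise. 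Weighting the displayed identity by $c^{(\Lambda)}_\alpha\br^\alpha$ and summing gives $D_{\omega_1,T_1}D_{\clw_\Lambda,A_\Lambda}(\br)D_{\omega_1,T_1}=\sum_\alpha c^{(\Lambda)}_\alpha\br^\alpha T_\Lambda^\alpha D_{\omega_1,T_1}(1)T_\Lambda^{*\alpha}$. On the other hand, because $1/k_{\clw_{\Lambda\cup\{1\}}}=(1/k_{\omega_1})(1/k_{\clw_\Lambda})$ and $T_1$ commutes with $T_\Lambda$, the definition \eqref{D_{WT}} gives $D_{\clw_{\Lambda\cup\{1\}},T_{\Lambda\cup\{1\}}}(\br,s)=\sum_\alpha c^{(\Lambda)}_\alpha\br^\alpha T_\Lambda^\alpha D_{\omega_1,T_1}(s)T_\Lambda^{*\alpha}$.

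Comparing the two, the fixed-$\br$ form of (i) amounts to interchanging $\lim_{s\to1}$ with the sum over $\alpha$. I would do this by dominated convergence in the quadratic forms $\langle\,\cdot\,h,h\rangle$: for each $\alpha$, $\langle D_{\omega_1,T_1}(s)T_\Lambda^{*\alpha}h,T_\Lambda^{*\alpha}h\rangle\to\langle D_{\omega_1,T_1}(1)T_\Lambda^{*\alpha}h,T_\Lambda^{*\alpha}h\rangle$ as $s\to1$, while the summable bound $|c^{(\Lambda)}_\alpha|\br^\alpha\|h\|^2$ dominates uniformly in $s$ thanks to $0\le D_{\omega_1,T_1}(s)\le I$ (monotonicity of $s\mapsto D_{\omega_1,T_1}(s)$ from Proposition~\ref{Pro_Decr} together with $\lim_{s\to0^+}D_{\omega_1,T_1}(s)=I$ gives $\|D_{\omega_1,T_1}(s)\|\le1$). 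This yields $D_{\omega_1,T_1}D_{\clw_\Lambda,A_\Lambda}(\br)D_{\omega_1,T_1}=\text{SOT}-\lim_{s\to1}D_{\clw_{\Lambda\cup\{1\}},T_{\Lambda\cup\{1\}}}(\br,s)$. The identical template, now using \eqref{intertwining X} in the form $Q_{T_1}X_\Lambda^\alpha X_\Lambda^{*\alpha}Q_{T_1}=T_\Lambda^\alpha Q_{T_1}^2T_\Lambda^{*\alpha}$ with $Q_{T_1}^2=\text{SOT}-\lim_k T_1^kT_1^{*k}$ (the domination coming from $\|T_1^kT_1^{*k}\|\le1$), proves the fixed-$\br$ form of (ii): $Q_{T_1}D_{\clw_\Lambda,X_\Lambda}(\br)Q_{T_1}=\text{SOT}-\lim_k T_1^kD_{\clw_\Lambda,T_\Lambda}(\br)T_1^{*k}$.

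The final and most delicate step is to send $\br\to\mathbf e$ in these fixed-$\br$ identities, and this is where I expect the real difficulty, since it requires interchanging the limit $\br\to\mathbf e$ with the internal limit ($s\to1$, resp.\ $k\to\infty$). Note first that $T_{\Lambda\cup\{1\}}$ is a $\clw_{\Lambda\cup\{1\}}$-hypercontraction by Proposition~\ref{subtuple}, and that $D_{\clw_\Lambda,A_\Lambda}(\mathbf e)$ and $D_{\clw_\Lambda,X_\Lambda}(\mathbf e)$ exist by Proposition~\ref{Pro_Decr}. For (i) the left-hand side converges as $\br\to\mathbf e$ by norm-continuity of conjugation, while Proposition~\ref{Pro_Decr} shows the net $D_{\clw_{\Lambda\cup\{1\}},T_{\Lambda\cup\{1\}}}(\br,s)$ is decreasing, so its joint SOT-limit as $(\br,s)\to\mathbf e$ exists and coincides with every iterated limit; this settles (i). For (ii) I would pass to the nonnegative scalars $g_k(\br)=\langle T_1^kD_{\clw_\Lambda,T_\Lambda}(\br)T_1^{*k}h,h\rangle$, which are decreasing in $\br$ (Proposition~\ref{Pro_Decr}) and decreasing in $k$; the latter monotonicity is the crux and follows from $D_{\clw_\Lambda,T_\Lambda}(\br)-T_1D_{\clw_\Lambda,T_\Lambda}(\br)T_1^*\ge0$, which I obtain by applying Remark~\ref{split limit} to $(\mathds 1,\clw_\Lambda)\in S(\clw_{\Lambda\cup\{1\}})$. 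For a family of nonnegative numbers decreasing in each of two indices the two iterated infima agree, so $\lim_{\br\to\mathbf e}\lim_k g_k(\br)=\lim_k g_k(\mathbf e)$; as both sides of (ii) are self-adjoint, this quadratic-form identity upgrades to the operator identity. The main obstacle throughout is precisely the legitimacy of these limit interchanges, and the decisive inputs that resolve it are the monotonicity supplied by Proposition~\ref{Pro_Decr} and Remark~\ref{split limit} together with the uniform contractive bounds powering the dominated-convergence arguments.
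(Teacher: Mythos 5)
Your proof is correct and follows essentially the same route as the paper: the intertwining relations \eqref{intertwining A} and \eqref{intertwining X} convert the $A_\Lambda$- and $X_\Lambda$-expressions into $T$-expressions, dominated convergence (using $0\le D_{\omega_1,T_1}(s)\le I$ and $\|T_1^kT_1^{*k}\|\le 1$) handles the sum--limit interchanges, and the monotonicity in both parameters supplied by Proposition~\ref{Pro_Decr} and Remark~\ref{split limit} legitimizes the final interchange of limits --- which is exactly the content of the paper's appendix. Your reorganization (proving the identities at a fixed interior point $\br$ and only then letting $\br\to\mathbf e$) is a cosmetic restructuring of the same argument, not a different method.
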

\textit{Proof:} 
Let $\Lambda=(\lambda_1,\dots,\lambda_m)\subseteq \{2,\dots,n\}$. Then 
\begin{align*}
D_{\omega_1,T_1}D_{\clw_{\Lambda},A_{\Lambda}}(1,\dots,1)D_{\omega_1,T_1} & =D_{\omega_1,T_1}\big(\text{SOT}-\lim_{\br\to (1,\dots,1)}\sum_{\alpha\in\Z_+^{m}}\br^{\alpha}c_{\alpha_1}^{(\lambda_1)}\cdots c_{\alpha_m}^{(\lambda_m)}A_{\Lambda}^{\alpha}A_{\Lambda}^{* \alpha}\big)D_{\omega_1,T_1}\\
&=\text{SOT}-\lim_{\br\to (1,\dots,1)}\sum_{\alpha\in\Z_+^{m}}\br^{\alpha}c_{(\alpha_1)}^{(\lambda_1)}\cdots c_{\alpha_m}^{(\lambda_m)}T_{\Lambda}^{\alpha} D_{\omega_1,T_1}^2T_{\Lambda}^{* \alpha}\quad (\text{by } ~\eqref{intertwining A})\\
& =\text{SOT}-\lim_{\br\to (1,\dots,1)}\sum_{\alpha\in\Z_+^{m+1}}\br^{\alpha}c^{(1)}_{\alpha_1}c_{\alpha_2}^{(\lambda_1)}\cdots c^{(\lambda_m)}_{\alpha_{m+1}}T_{\Lambda\cup \{1\}}^{\alpha}T_{\Lambda\cup \{1\}}^{* \alpha}\\
&= D_{\clw_{\Lambda\cup\{1\}}, T_{\Lambda\cup\{1\}}}(1,\dots,1).
\end{align*}
For the second part of the lemma we again do a similar computation.
\begin{align*}
Q_{T_1}D_{\clw_{\Lambda},X_{\Lambda}}(1,\dots,1)Q_{T_1} &=
Q_{T_1}\big(\text{SOT}-\lim_{\br\to (1,\dots,1)}\sum_{\alpha\in\Z_+^{m}}\br^{\alpha}c_{\alpha_1}^{(\lambda_1)}\cdots c_{\alpha_m}^{(\lambda_m)}X_{\Lambda}^{\alpha}X_{\Lambda}^{* \alpha}\big)Q_{T_1}\\
&= \text{SOT}-\lim_{\br\to (1,\dots,1)}\sum_{\alpha\in\Z_+^{m}}\br^{\alpha}c_{\alpha_1}^{(\lambda_1)}\cdots c_{\alpha_m}^{(\lambda_m)}T_{\Lambda}^{\alpha}Q_{T_1}^2T_{\Lambda}^{* \alpha}\quad (\text{by \eqref{intertwining X}})\\
& = \text{SOT}-\lim_{\br\to (1,\dots,1)} \sum_{\alpha\in\Z_+^{m}}\br^{\alpha}c_{\alpha_1}^{(\lambda_1)}\cdots c_{\alpha_m}^{(\lambda_m)}T_{\Lambda}^{\alpha}(\text{SOT}-\lim_{k \to \infty}T_1^k T_1^{* k}) T_{\Lambda}^{* \alpha}\\
&= \text{SOT}-\lim_{k \to \infty}T_1^k D_{\clw_{\Lambda}, T_{\Lambda}}(1,\dots,1)T_1^{* k}. 
\end{align*}
In the second last equality, one can interchange of limit and sum using Lebesgue's dominated convergence theorem and the interchange of limits in the last equality can be justified by showing that the double limit exists (see the appendix below for more details). This completes the proof.
\qed

Dilations of pure $\clw$-hypercontractions are very concrete and less complicated to describe compared to that of general $\clw$-hypercontractions. We consider this simpler case first which also helps facilitate the understanding of our dilation method. 
 Recall that for a multi-weight sequence $\clw=(\omega_1,\dots,\omega_n)$ and a non-empty subset $\Lambda=\{\lambda_1,\dots,\lambda_m\}$ of $I$, we denote by $\clw_{\Lambda}$ the multi-weight sequence $(\omega_{\lambda_1},\dots,\omega_{\lambda_m})$. When $\Lambda=\{1,\dots,i\}\subseteq I$ then we simply use $\clw_{i]}$ and $\clw_{[i+1}$ to denote $\clw_{\Lambda}$ and $\clw_{\Lambda^c}$, respectively, with the convention that $\clw_{[n+1}=\emptyset$.

Let $T=(T_1,\ldots,T_n)$ be a pure $\clw$-hypercontraction on $\clh$. Let $M_z$ on $A^2_{\omega_1}(\cld_{\omega_1,T_1})$ be the dilation of $T_1$ with the canonical dilation map $\pi_{\omega_1,T_1} : \clh \to A^2_{\omega_1}(\cld_{\omega_1,T_1})$ as in Theorem~\ref{olo}. 
Then by Proposition~\ref{n-1 tuple lift} and Remark~\ref{pure}, we get an $\clw_{[2}$-hypercontraction $\bm A^{(\bm 2)}=(A^{(2)}_2,\ldots, A^{(2)}_n)$ on $\cld_{\omega_1, T_1}$ such that 
\[ \pi_{\omega_1,T_1}T_i^* = (I_{A^2_{\omega_1}} \otimes A^{(2)}_i)^* \pi_{\omega_1,T_1} \quad \quad (i=2,\ldots,n).\] 
Since each $T_i$ is pure then by the intertwining relation \ref{intertwining A} $\bm A^{(2)}$ is also a pure $\clw_{[2}$-hypercontraction. We now apply Proposition~\ref{n-1 tuple lift} to $\bm A^{(2)}$ as follows. Let $\pi_{\omega_2,A_2^{(2)}} : \cld_{\omega_1,T_1} \to A^2_{\omega_2}(\cld_{\omega_2,A_2^{(2)}})$ be the dilation map of $A^{(2)}_2$. Then we get a pure $\clw_{[3}$-hypercontraction $\bm A^{(3)}= (A^{(3)}_3,\ldots, A^{(3)}_n)$ such that 
\[
\pi_{\omega_2,A^{(2)}_2}A^{(2) *}_i = (I_{A^2_{\omega_2}} \otimes A^{(3) *}_i) \pi_{\omega_2,A^{(2)}_2} \quad \quad (i=3,\ldots,n).
\]
 Set $\Pi_1:= \pi_{\omega_1,T_1}: \clh \to A^2_{\omega_1}(\cld_{\omega_1,T_1})$ and 
\[ \Pi_2:= I_{A^2_{\omega_1}} \otimes \pi_{\omega_2,A^{(2)}_2} : A^2_{\omega_1}(\cld_{\omega_1,T_1}) \to A^2_{\omega_1} \otimes A^2_{\omega_2}(\cld_{\omega_2,A_2^{(2)}}) = 
A^2_{\clw_{2]}}(\cld_{\omega_2,A_2^{(2)}} ). \]
Then a moments thought reveals that the map $\Pi_2\circ\Pi_1$ satisfies
\[ (\Pi_2\circ\Pi_1)T_1^* = M_{z_1}^*(\Pi_2\circ\Pi_1), \quad (\Pi_2\circ\Pi_1)T_2^* = M_{z_2}^*(\Pi_2\circ\Pi_1), \] 
and for all $i = 3, \ldots , n$,
\[ (\Pi_2\circ\Pi_1)T_i^* = (I_{A^2_{\clw_{2]}}(\D^2)} \otimes A^{(3)}_i)^*(\Pi_2\circ\Pi_1). \]
Repeating the above procedure $j$-times we get a pure $\clw_{[j+1}$-hypercontraction $\bm A^{(j+1)}=(A^{(j+1)}_{j+1},\dots,A^{(j+1)}_n )$ 
and an isometry 
\[ \Pi_j := I_{A^2_{\clw_{j-1]}}} \otimes \pi_{\omega_j,A_j^{(j)}} :  A^2_{\clw_{j-1]}}
( \cld_{\omega_{j-1},A_{j-1}^{(j-1)}} )  \to 
A^2_{\clw_{j]}}  ( \cld_{\omega_{j},A_{j}^{(j)}}),\] 
such that for all $i = 1,\ldots, j$, 
\[ (\Pi_j\circ \cdots\circ \Pi_2\circ\Pi_1)T_i^* = M_{z_i}^*(\Pi_j\circ \cdots \circ\Pi_2\circ\Pi_1), \]
and for all $i = j + 1, \ldots , n$, 
\[ (\Pi_j\circ \cdots\circ \Pi_2\circ\Pi_1)T_i^* = (I_{A^2_{\clw_{j]}}} \otimes A^{(j+1)}_i) (\Pi_j\circ \cdots\circ \Pi_2\circ\Pi_1). \]
Thus after $n$-th step we will have the following chain of isometries
\[ 0 \rightarrow \clh \xrightarrow{\Pi_1} A^2_{\omega_1}(\cld_{\omega_1,T_1}) \xrightarrow{\Pi_2} A^2_{\clw_{ 2]}}( \cld_{\omega_2,A_2^{(2)}} ) \xrightarrow{\Pi_3} \cdots \xrightarrow{\Pi_n} A^2_{\clw} ( \cld_{\omega_{n},A_{n}^{(n)}} )\]
such that the isometry $\Pi_T: = \Pi_n \circ \cdots \circ \Pi_2 \circ \Pi_1: \clh \to A^2_{\clw} (\cld_{\omega_{n},A_{n}^{(n)}} )$ satisfies
\[ \Pi_T T_i^* = M_{z_i}^*\Pi_T \quad (i=1,\dots,n).\]
Thus $T$ dilates to the weighted Bergman shift $(M_{z_1}, \ldots , M_{z_n})$ on $A^2_{\clw} (\cld_{\omega_{n},A_{n}^{(n)}} )$ via the dilation map $\Pi_T$.
We summarize this in the next result.

\begin{Theorem}\label{model-gen-pure}
Let $\clw$ be a multi-weight sequence and let $ T=(T_1,\ldots ,T_n)$ be a pure $\clw$-hypercontraction on $\clh$. Then there exist a Hilbert space $\cle$ and 
a joint $(M_{z_1}^*,\dots, M_{z_n}^*)$-invariant subspace $\clq$ of $ A^2_{\clw} (\cle )$ such that  
\[ (T_1,\ldots,T_n) \cong P_{\clq}(M_{z_1}, \ldots, M_{z_n})|_{\clq}. \] 
\end{Theorem}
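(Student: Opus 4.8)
The plan is to build the dilation one coordinate at a time, using Olofsson's single-variable dilation (Theorem~\ref{olo}) as the engine at each stage and the commutant-lifting Proposition~\ref{n-1 tuple lift} to carry the induction forward. Since $T$ is a \emph{pure} $\clw$-hypercontraction, Proposition~\ref{subtuple} applied to $\Lambda=\{i\}$ shows that each $T_i$ is a pure $\omega_i$-hypercontraction, so the single-variable machinery applies to every coordinate.

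First I would dilate $T_1$: Theorem~\ref{olo} yields the isometric dilation map $\pi_{\omega_1,T_1}:\clh\to A^2_{\omega_1}(\cld_{\omega_1,T_1})$ with $\pi_{\omega_1,T_1}T_1^*=M_z^*\pi_{\omega_1,T_1}$. I then invoke Proposition~\ref{n-1 tuple lift} together with Remark~\ref{pure}: because $T_1$ is pure the co-isometric summand vanishes, so $T_2,\dots,T_n$ lift to operators $I_{A^2_{\omega_1}}\otimes A^{(2)}_i$ with $\pi_{\omega_1,T_1}T_i^*=(I_{A^2_{\omega_1}}\otimes A^{(2)}_i)^*\pi_{\omega_1,T_1}$, where $\bm A^{(2)}=(A^{(2)}_2,\dots,A^{(2)}_n)$ is a $\clw_{[2}$-hypercontraction on $\cld_{\omega_1,T_1}$. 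The intertwining relation~\eqref{intertwining A} transfers purity from the $T_i$ to $\bm A^{(2)}$, so $\bm A^{(2)}$ is again a \emph{pure} hypercontraction, now for a multi-weight sequence with one fewer entry.

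Iterating this is the heart of the argument. At the $(j{+}1)$-th stage I apply Theorem~\ref{olo} to $A^{(j+1)}_{j+1}$, lift the remaining operators by Proposition~\ref{n-1 tuple lift}, and tensor with the identity on the already-built factor, producing the isometry $\Pi_{j+1}=I_{A^2_{\clw_{j]}}}\otimes\pi_{\omega_{j+1},A^{(j+1)}_{j+1}}$ and a pure $\clw_{[j+2}$-hypercontraction $\bm A^{(j+2)}$. After $n$ stages the composition $\Pi_T=\Pi_n\circ\cdots\circ\Pi_1:\clh\to A^2_{\clw}(\cld_{\omega_n,A^{(n)}_n})$ is an isometry. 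The one point requiring care is that each new map $\Pi_{j+1}$ acts as the identity on the tensor factors carrying $z_1,\dots,z_j$, and hence commutes with $M_{z_1},\dots,M_{z_j}$; this is exactly what guarantees that the relation $\Pi_T T_i^*=M_{z_i}^*\Pi_T$ established for the early coordinates survives the dilation of the later ones, so that in the end $\Pi_T T_i^*=M_{z_i}^*\Pi_T$ holds for every $i=1,\dots,n$.

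To conclude, I set $\cle=\cld_{\omega_n,A^{(n)}_n}$ and $\clq=\text{ran}\,\Pi_T$. The relations $\Pi_T T_i^*=M_{z_i}^*\Pi_T$ force $\clq$ to be jointly $(M_{z_1}^*,\dots,M_{z_n}^*)$-invariant, and transporting along the unitary $\Pi_T:\clh\to\clq$ gives $T_i\cong P_{\clq}M_{z_i}|_{\clq}$ for all $i$. The genuine difficulty of the theorem is absorbed entirely into Proposition~\ref{n-1 tuple lift} --- that after dilating one coordinate the remaining operators lift simultaneously to commuting operators which both commute with the new shift and form a pure hypercontraction for the reduced multi-weight sequence; granting that, the remaining work is the bookkeeping above, namely checking that the isometries compose and that the intertwining relations persist.
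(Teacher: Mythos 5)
Your proposal is correct and follows essentially the same route as the paper: dilate $T_1$ by Theorem~\ref{olo}, invoke Proposition~\ref{n-1 tuple lift} with Remark~\ref{pure} to obtain a pure $\clw_{[2}$-hypercontraction $\bm A^{(2)}$ (purity passing through the intertwining relation~\eqref{intertwining A}), iterate coordinate by coordinate tensoring with identities, and take $\clq=\text{ran}\,\Pi_T$ for the composed isometry $\Pi_T=\Pi_n\circ\cdots\circ\Pi_1$. Your emphasis on why the earlier intertwining relations survive later stages (each $\Pi_{j+1}$ acting as the identity on the factors carrying $z_1,\dots,z_j$) is precisely the ``moment's thought'' step in the paper's argument.
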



We now consider dilations of general $\clw$-hypercontractions and find their explicit dilation. In order to make the proof of the dilation result shorter, we take out some part of the proof and prove it as a separate lemma. The lemma is a special case of one operator being a co-isometry.
\begin{Lemma}\label{model lemma}
Let $(V, Y_1,\ldots,Y_{n-1})\in \clb(\clh)^n$ be an $n$-tuple of commuting contractions such that $V$ is a co-isometry. Suppose that the $(n-1)$-tuple $Y^\prime =(Y_1, \ldots, Y_{n-1})$ is an $\clw$-hypercontractive tuple and dilates to an $\clw$-hypercontractive tuple $R^\prime=(R_1, \ldots, R_{n-1})$ on $\clk$ through the dilation map $\Pi:\clh\to \clk:= \bigoplus_{\Lambda\subseteq I \setminus\{n\} } A^2_{\clw_{\Lambda}}(\cle_{\Lambda})$, where 
\[ 
\Pi= \bigoplus_{\Lambda\subseteq I \setminus \{n\} } \Pi_{\Lambda}
\quad \text{and} \quad 
R_i= \left(\bigoplus_{\Lambda\subseteq I \setminus \{n\}} R_i^{\Lambda}\right), 
\]
such that for each $\Lambda = \{\lambda_1,\ldots, \lambda_m \}$, 
$\Pi_{\Lambda}:\clh \to A^2_{\clw_{\Lambda}}(\cle_{\Lambda})$ defined by
\[ 
\Pi_{\Lambda}h(\z) =  \sum_{\alpha \in \Z_+^m} \Big( \frac{1}{\omega_{\alpha_1}^{(\lambda_1)} \cdots \omega_{\alpha_m}^{(\lambda_m)}} \Delta_{\Lambda} Y_{\Lambda}^{*\alpha} h \Big)\z^{\alpha},
\] $R^{\Lambda}_{\lambda_j}=M_{z_j}$ on $A^2_{\clw_{\Lambda}}(\cle_\Lambda)$ $(j=1,\dots,m)$ and for $i\notin\Lambda$,  $R_{i}^{\Lambda}=I_{A^2_{\clw_{\Lambda}}}\otimes V^{\Lambda}_i$ for some co-isometry $V^{\Lambda}_i$ on $\cle_{\Lambda}= \overline{ran}\Delta_{\Lambda}$, with 
$\Delta_{\Lambda}\in B(\clh)$, satisfying  $\Delta_{\Lambda} Y_i^*=(V_{i}^{\Lambda})^*\Delta_{\Lambda}$ for all $i\notin \Lambda$. 
If the co-isometry $V$ satisfies the relations 
\[V \Delta^{*}_{\Lambda}\Delta_{\Lambda} V^*= \Delta^{*}_{\Lambda}\Delta_{\Lambda} \quad \text{for each}\quad \Lambda \subseteq \{1,\ldots,n-1\},\]
then $V$ lifts to a co-isometry $W$ on $\clk$ such that the tuple $(V, Y_1, \ldots, Y_{n-1})$ on $\clh$ dilates to 
$R=(W, R_1, \ldots, R_{n-1})$ on $\clk$. 
\end{Lemma}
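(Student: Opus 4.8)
The plan is to build the co-isometry $W$ summand by summand over the decomposition $\clk=\bigoplus_{\Lambda\subseteq I\setminus\{n\}}A^2_{\clw_{\Lambda}}(\cle_{\Lambda})$, exploiting that a co-isometry requires no enlargement of the space in its own coordinate. The starting point is to convert the hypothesis $V\Delta^{*}_{\Lambda}\Delta_{\Lambda}V^{*}=\Delta^{*}_{\Lambda}\Delta_{\Lambda}$ into an isometry on $\cle_{\Lambda}$. Indeed, for $h\in\clh$ this identity gives $\|\Delta_{\Lambda}V^{*}h\|^{2}=\langle V\Delta^{*}_{\Lambda}\Delta_{\Lambda}V^{*}h,h\rangle=\|\Delta_{\Lambda}h\|^{2}$, so the assignment $\Delta_{\Lambda}h\mapsto\Delta_{\Lambda}V^{*}h$ is well defined and isometric on $\mathrm{ran}\,\Delta_{\Lambda}$, hence extends to an isometry $S_{\Lambda}$ on $\cle_{\Lambda}=\overline{\mathrm{ran}}\,\Delta_{\Lambda}$ satisfying $S_{\Lambda}\Delta_{\Lambda}=\Delta_{\Lambda}V^{*}$. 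I would then set $W_{\Lambda}:=I_{A^2_{\clw_{\Lambda}}}\otimes S_{\Lambda}^{*}$ on $A^2_{\clw_{\Lambda}}(\cle_{\Lambda})$ and $W:=\bigoplus_{\Lambda}W_{\Lambda}$. Since $S_{\Lambda}$ is an isometry, $S_{\Lambda}^{*}S_{\Lambda}=I$, whence $W_{\Lambda}W_{\Lambda}^{*}=I_{A^2_{\clw_{\Lambda}}}\otimes S_{\Lambda}^{*}S_{\Lambda}=I$; thus each $W_{\Lambda}$, and hence $W$, is a co-isometry.

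Next I would verify the intertwining $\Pi V^{*}=W^{*}\Pi$ on each summand. Because $V$ commutes with each $Y_i$, it commutes with every $Y_{\Lambda}^{*\alpha}$, and using $S_{\Lambda}\Delta_{\Lambda}=\Delta_{\Lambda}V^{*}$ one computes
\[ (\Pi_{\Lambda}V^{*}h)(\z)=\sum_{\alpha\in\Z_+^{m}}\frac{1}{\omega^{(\lambda_1)}_{\alpha_1}\cdots\omega^{(\lambda_m)}_{\alpha_m}}\Delta_{\Lambda}V^{*}Y_{\Lambda}^{*\alpha}h\,\z^{\alpha}=(I_{A^2_{\clw_{\Lambda}}}\otimes S_{\Lambda})\,(\Pi_{\Lambda}h)(\z), \]
which is exactly $(W_{\Lambda}^{*}\Pi_{\Lambda}h)(\z)$. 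Summing over $\Lambda$ gives $\Pi V^{*}=W^{*}\Pi$, so $W$ is a co-isometric lift of $V$ in the required sense.

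The core of the argument is showing that $W$ commutes with each $R_i$, $i=1,\dots,n-1$; since both operators respect the direct sum, it suffices to check $W_{\Lambda}R_i^{\Lambda}=R_i^{\Lambda}W_{\Lambda}$ on each $A^2_{\clw_{\Lambda}}(\cle_{\Lambda})$. If $i=\lambda_j\in\Lambda$ then $R_i^{\Lambda}=M_{z_j}$ acts only on the $A^2_{\clw_{\Lambda}}$ factor while $W_{\Lambda}=I\otimes S_{\Lambda}^{*}$ acts only on the $\cle_{\Lambda}$ factor, so the two commute trivially. The remaining case $i\notin\Lambda$ is where the hypotheses combine: I would show that $S_{\Lambda}$ commutes with $(V_i^{\Lambda})^{*}$ on the dense subspace $\mathrm{ran}\,\Delta_{\Lambda}$, from which $W_{\Lambda}(I\otimes V_i^{\Lambda})=(I\otimes V_i^{\Lambda})W_{\Lambda}$ follows. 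Concretely, using $(V_i^{\Lambda})^{*}\Delta_{\Lambda}=\Delta_{\Lambda}Y_i^{*}$, $S_{\Lambda}\Delta_{\Lambda}=\Delta_{\Lambda}V^{*}$, and $V^{*}Y_i^{*}=Y_i^{*}V^{*}$, one gets $S_{\Lambda}(V_i^{\Lambda})^{*}\Delta_{\Lambda}=\Delta_{\Lambda}V^{*}Y_i^{*}=\Delta_{\Lambda}Y_i^{*}V^{*}=(V_i^{\Lambda})^{*}S_{\Lambda}\Delta_{\Lambda}$, and passing to the closure and taking adjoints yields $V_i^{\Lambda}S_{\Lambda}^{*}=S_{\Lambda}^{*}V_i^{\Lambda}$.

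Finally I would assemble the conclusion. As $(R_1,\dots,R_{n-1})$ already commute and $W$ commutes with each of them, $R=(W,R_1,\dots,R_{n-1})$ is a commuting tuple consisting of the co-isometry $W$ together with the given dilation operators. Combining $\Pi Y_i^{*}=R_i^{*}\Pi$ (given) with $\Pi V^{*}=W^{*}\Pi$ and the fact that $\Pi$ is an isometry shows that $(V,Y_1,\dots,Y_{n-1})$ dilates to $R$. The step I expect to be the main obstacle is the $i\notin\Lambda$ commutation: it is the only place where the two intertwining relations, the commutativity $VY_i=Y_iV$, and the defining property of $S_{\Lambda}$ must all be used at once, and one must be careful that the identities, which hold a priori only on $\mathrm{ran}\,\Delta_{\Lambda}$, extend by continuity to all of $\cle_{\Lambda}$.
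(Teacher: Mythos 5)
Your proposal is correct and follows essentially the same route as the paper's own proof: the same summandwise construction $W=\bigoplus_{\Lambda}\bigl(I_{A^2_{\clw_{\Lambda}}}\otimes W_{\Lambda}\bigr)$, the same verification of $\Pi V^{*}=W^{*}\Pi$, and the identical chain of identities for the $i\notin\Lambda$ commutation. The only cosmetic difference is that you build the isometry $S_{\Lambda}$ (which is exactly the paper's $W_{\Lambda}^{*}$) directly from the norm identity $\|\Delta_{\Lambda}V^{*}h\|=\|\Delta_{\Lambda}h\|$, whereas the paper obtains it by citing the Douglas factorization lemma.
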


\textit{Proof:}
By the hypothesis, for each $\Lambda \subseteq \{1,\ldots, n-1 \}$, $V \Delta^{*}_{\Lambda}\Delta_{\Lambda} V^* = \Delta^{*}_{\Lambda}\Delta_{\Lambda}$. Therefore for each $ \Lambda \subseteq \{1,\ldots, n-1\}$, by the Douglas factorization lemma, there exists a co-isometry 
$W_{\Lambda}$ on $\cle_{\Lambda}$ such that 
\[
W_{\Lambda}^* \Delta_{\Lambda}=\Delta_{\Lambda} V^*.
\]
We define a co-isometry $W$ on  $\clk=\bigoplus_{\Lambda \subseteq I\setminus \{n\}} A^2_{\clw_{\Lambda}}(\cle_{\Lambda}) $ as
\[
W =\left(\bigoplus_{\Lambda \subseteq I \setminus \{n\}}  (I_{A^2_{\clw_{\Lambda} } } \otimes W_{\Lambda})\right). 
\] 
Now, we observe that for any $\Lambda \subseteq \{1,\ldots,n-1\}$, $h \in \cle_{\Lambda}$ and $\z \in \D^{|\Lambda|}$,
\begin{align*}
\Pi_{\Lambda}V^*h(\z) &= \sum_{\alpha \in \Z_+^m} \Big( \frac{1}{\omega_{\alpha_1}^{(\lambda_1)} \cdots \omega_{\alpha_m}^{(\lambda_m)}} \Delta_{\Lambda} Y_{\Lambda}^{*\alpha} V^*h \Big) z_{\lambda_1}^{\alpha_1} \cdots z_{\lambda_m}^{\alpha_m} \\
&= \sum_{\alpha \in \Z_+^m} \Big( \frac{1}{\omega_{\alpha_1}^{(\lambda_1)} \cdots \omega_{\alpha_m}^{(\lambda_m)}} \Delta_{\Lambda} V^* Y_{\Lambda}^{*\alpha}h \Big) z_{\lambda_1}^{\alpha_1} \cdots z_{\lambda_m}^{\alpha_m} \\
&= \sum_{\alpha \in \Z_+^m} \Big( \frac{1}{\omega_{\alpha_1}^{(\lambda_1)} \cdots \omega_{\alpha_m}^{(\lambda_m)}} W_{\Lambda}^*\Delta_{\Lambda} Y_{\Lambda}^{*\alpha} h \Big) z_{\lambda_1}^{\alpha_1} \cdots z_{\lambda_m}^{\alpha_m}\\
&= W_{\Lambda}^* \Pi_{\Lambda} h(\z) .
\end{align*}
Thus $\Pi V^* = W^* \Pi$. For the commutativity of the tuple $(W, R_1, \ldots, R_{n-1})$, we fix $i\in\{1,\dots,n-1\}$ and show that $WR_i=R_iW$. To this end, it is enough to show that for each $\Lambda \subseteq \{1,\ldots, n-1\}$, $(I_{A^2_{\clw_{\Lambda} } } \otimes W_{\Lambda})R_i^{\Lambda}=R_i^{\Lambda}(I_{A^2_{\clw_{\Lambda} } } \otimes W_{\Lambda})$. If $i=\lambda_j\in\Lambda$, then $R_i^{\Lambda}=M_{z_j}$ and there is nothing to prove. If $i\notin \Lambda$, then $R_i^{\Lambda}=I_{A^2_{\clw_{\Lambda} } }\otimes V_i^{\Lambda}$ and the commutativity can be read from the following:
\begin{align*}
    W_{\Lambda}^* V^{\Lambda*}_i \Delta_{\Lambda}
    &= W_{\Lambda}^* \Delta_{\Lambda} Y_i^*\\
    &=\Delta_{\Lambda} V^* Y^*_i\\
    &=\Delta_{\Lambda} Y^*_i V^*\\
    &= V^{\Lambda*}_i W_{\Lambda}^* \Delta_{\Lambda} .
\end{align*}
This completes the proof. \qed

We are now ready to prove the model for general $\clw$-hypercontractive tuples. It is an exact generalization of the model given in Theorem 2.8 in \cite{CV}.
\begin{Theorem} \label{model-gen-tuple}
Let $T=(T_1,\ldots,T_n)$ be an $\clw$-hypercontraction on $\clh$ for some multi-weight sequence $\clw$.
Then there exist $\Delta_{\Lambda}\in B(\clh)$ corresponding to each subset $\Lambda$ of $I$ with $\cle_{\Lambda}=\overline{ran}\Delta_{\Lambda}$, an isometry 
\[
\Pi:\clh\to \clk:= \bigoplus_{\Lambda\subseteq I}A^2_{\clw_{\Lambda}}(\cle_{\Lambda}), 
\]
and an $n$-tuple of commuting contractions $R=(R_1,\ldots,R_n)$ on $\clk$ such that 
\[
\Pi T_i^*=R_i^*\Pi\quad (i=1,\dots,n),
\]
where with respect to the above decomposition of $\clk$
 \begin{equation}\label{explicit form}
 \Pi= \bigoplus_{\Lambda\subseteq I}\Pi_{\Lambda}\ \text{and }
 R_i= \left(\bigoplus_{\Lambda\subseteq I}R_i^{\Lambda}\right)
 \end{equation}
such that if $\Lambda=\{\lambda_1,\dots,\lambda_m\}$ then $\Pi_{\Lambda}: \clh\to A^2_{\clw_{\Lambda}}(\cle_{\Lambda})$ is defined by 
\[ \Pi_{\Lambda}h(\z) =  \sum_{\alpha \in \Z_+^m} \Big( \frac{1}{\omega_{\alpha_1}^{(\lambda_1)} \cdots \omega_{\alpha_m}^{(\lambda_m)}} \Delta_{\Lambda} T_{\Lambda}^{*\alpha} h \Big)\z^{\alpha}
\]
and 
\[
R_i^{\Lambda} =\left\{\begin{array}{rl}
I_{A^2_{\clw_{\Lambda}}}\otimes V^{\Lambda}_i & \text{ if } i\notin \Lambda\\
M_{z_j} & \text{ if } i=\lambda_j\in\Lambda
\end{array}
\right.
\]
for some co-isometry $V^{\Lambda}_i$ on $\cle_{\Lambda}$. 
Moreover, for every subset $\Lambda=\{\lambda_1,\dots,\lambda_m\}$ of $I$, 
\begin{align*}\Delta_{\Lambda}^{*} \Delta_{\Lambda}& = 
\text{SOT}-\lim_{\beta \to \infty} T_{\Lambda^{c}}^{ \beta } D_{\clw_{\Lambda},T_{\Lambda}}(1,\dots,1) T_{\Lambda^{c}}^{* \beta}
\end{align*}
and for all $i\notin \Lambda$, $\Delta_{\Lambda} T_i^*=(V_{i}^{\Lambda})^*\Delta_{\Lambda}$. 
\end{Theorem}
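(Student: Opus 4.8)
The plan is to induct on the number $n$ of operators, peeling off the first coordinate at each stage with the one-variable dilation (Theorem~\ref{olo}) and the commutant lifting (Proposition~\ref{n-1 tuple lift}), and closing the induction with the co-isometry lifting of Lemma~\ref{model lemma}. The base case $n=1$ is exactly Theorem~\ref{olo}, once we read the two summands $A^2_{\omega_1}(\cld_{\omega_1,T_1})$ and $\clq_{T_1}$ as the pieces indexed by $\Lambda=\{1\}$ and $\Lambda=\emptyset$: here $A^2_{\clw_\emptyset}(\cle_\emptyset)=\cle_\emptyset=\clq_{T_1}$ carries the co-isometry $U_{T_1}$ as $R_1^\emptyset$, the empty product gives $D_{\clw_\emptyset,T_\emptyset}=I$, and the two defect identities read $\Delta_{\{1\}}^*\Delta_{\{1\}}=D_{\omega_1,T_1}(1)$ and $\Delta_\emptyset^*\Delta_\emptyset=\text{SOT-}\lim_k T_1^kT_1^{*k}=Q_{T_1}^2$.

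For the inductive step I would first apply Theorem~\ref{olo} to $T_1$ to get $\Pi_{\omega_1,T_1}:\clh\to A^2_{\omega_1}(\cld_{\omega_1,T_1})\oplus\clq_{T_1}$ with model operator $M_z\oplus U_{T_1}$, and then Proposition~\ref{n-1 tuple lift} to lift $(T_2,\dots,T_n)$ to the commuting $\hat{\clw_1}$-hypercontraction $V_i=(I_{A^2_{\omega_1}}\otimes A_i)\oplus X_i$, where $A=(A_2,\dots,A_n)$ and $X=(X_2,\dots,X_n)$ are $\hat{\clw_1}$-hypercontractions on $\cld_{\omega_1,T_1}$ and $\clq_{T_1}$, respectively. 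Splitting along the orthogonal decomposition $A^2_{\omega_1}(\cld_{\omega_1,T_1})\oplus\clq_{T_1}$ then produces two $(n-1)$-variable problems. On the first summand the leading coordinate is the pure shift $M_{z_1}$ and the remaining coordinates are $I_{A^2_{\omega_1}}\otimes A_i$, so I would apply the induction hypothesis to $A$ and tensor the resulting dilation with $A^2_{\omega_1}$; using $A^2_{\omega_1}\otimes A^2_{\clw_{\Lambda'}}=A^2_{\clw_{\{1\}\cup\Lambda'}}$ this yields precisely the summands indexed by the subsets $\Lambda\subseteq I$ containing $1$, with $M_{z_1}$ in the first slot. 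On the second summand $U_{T_1}$ is a co-isometry and the remaining coordinates are $X_i$; here I would apply the induction hypothesis to $X$ to obtain the summands indexed by the subsets of $\{2,\dots,n\}$, and then invoke Lemma~\ref{model lemma} (after relabeling so that $U_{T_1}$ is its distinguished co-isometry) to lift $U_{T_1}$ to a co-isometry $W$ on this space. The hypothesis $U_{T_1}(\Delta^X_{\Lambda'})^*\Delta^X_{\Lambda'}U_{T_1}^*=(\Delta^X_{\Lambda'})^*\Delta^X_{\Lambda'}$ of that lemma holds because $U_{T_1}$ commutes with each $X_i$ and is a co-isometry, so it passes through each $X_{\Lambda'}^\alpha X_{\Lambda'}^{*\alpha}$ and through the defining limits unchanged.

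Concatenating, I would set $\Pi_T=\big((I_{A^2_{\omega_1}}\otimes\Pi^A)\oplus\Pi^X\big)\circ\Pi_{\omega_1,T_1}$ and define $R_i$ coordinatewise on the two pieces. Isometry of $\Pi_T$ is immediate since the two pieces map isometrically into the orthogonal families of summands with and without $1$; the intertwining $\Pi_T T_i^*=R_i^*\Pi_T$ follows by chaining the intertwiners of Theorem~\ref{olo}, Proposition~\ref{n-1 tuple lift} and the inductive ones, while commutativity and contractivity of $R$ are inherited from the two pieces. To identify the defect operators I would read off the $\Lambda$-component of $\Pi_T$: for $1\in\Lambda$, pushing $\pi_{\omega_1,T_1}$ through $\Pi^A$ and using \eqref{intertwining A} gives $\Delta_\Lambda=\Delta^A_{\Lambda\setminus\{1\}}D_{\omega_1,T_1}$, whence \eqref{intertwining A} and Lemma~\ref{useful lemma}(i) turn the inductive formula for $(\Delta^A_{\Lambda\setminus\{1\}})^*\Delta^A_{\Lambda\setminus\{1\}}$ into $\Delta_\Lambda^*\Delta_\Lambda=\text{SOT-}\lim_\beta T_{\Lambda^c}^\beta D_{\clw_\Lambda,T_\Lambda}(1,\dots,1)T_{\Lambda^c}^{*\beta}$; for $1\notin\Lambda$ the same computation with $Q_{T_1}$ in place of $D_{\omega_1,T_1}$, using \eqref{intertwining X} and Lemma~\ref{useful lemma}(ii), gives $\Delta_\Lambda=\Delta^X_\Lambda Q_{T_1}$ and the same formula.

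I expect the main obstacle to lie in this last step for the subsets $\Lambda$ with $1\notin\Lambda$: there the computation naturally produces an iterated strong limit, namely an outer limit in the powers of $T_{\Lambda^c\setminus\{1\}}$ of an inner limit in the powers of $T_1$, and one must show this equals the single joint limit over $T_{\Lambda^c}$ appearing in the statement. This is exactly the interchange-of-limits point handled through Lemma~\ref{useful lemma} and the appendix, and it, together with the careful bookkeeping of the defect spaces $\cle_\Lambda$ through the induction, is the only genuinely delicate part; the remainder is a systematic assembly of the one-variable dilation and the two lifting lemmas.
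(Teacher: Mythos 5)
Your proposal is correct and follows essentially the same route as the paper's own proof: induction on $n$, peeling off $T_1$ via Theorem~\ref{olo}, splitting into the pure piece (handled by applying the inductive hypothesis to $A$ and tensoring with $A^2_{\omega_1}$) and the $\clq_{T_1}$ piece (inductive hypothesis applied to $X$, then Lemma~\ref{model lemma} to lift $U_{T_1}$), with the defect operators $\Delta_{\Lambda}=\Delta'_{\Lambda\setminus\{1\}}D_{\omega_1,T_1}$ or $\Delta''_{\Lambda}Q_{T_1}$ identified through \eqref{intertwining A}, \eqref{intertwining X} and Lemma~\ref{useful lemma}, exactly as in the paper. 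Your explicit verification of the hypothesis of Lemma~\ref{model lemma} (co-isometry plus commutation collapsing $UX^{\alpha}X^{*\alpha}U^*=X^{\alpha}X^{*\alpha}$) and your flagging of the limit-interchange issue settled in the appendix are points the paper leaves implicit or defers, but they do not change the argument.
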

\textit{Proof.} We prove the theorem by induction on $n$. For $n=1$, if $T$ is a $\omega$-hypercontraction on $\clh$ then by Theorem~\ref{olo} the result holds with $\cle_{\{1\}}=\cld_{\omega,T}$, $\cle_{\emptyset}=\clq_T$,
$R^{\{1\}}_1=M_z$ on $A^2_{\omega}(\cld_{\omega,T})$, $R^{\emptyset}_1=U$, $\Delta_{\{1\}}=D_{\omega, T}$, $\Delta_{\emptyset}= Q_T$ 
and $\Pi= \Pi_{\{1\}} \oplus \Pi_{\{\emptyset\}}$ with 
\[
\Pi_{\{1\}}h(z)= \sum_{k \geq 0} \Big( \frac{1}{\omega_k} \Delta_{\{1\}} T^{*k}h \Big) z^k\ \text{and } \Pi_{\{\emptyset \}}h = Q_Th \quad (h\in \clh, z\in \D).
\]

Now we assume that the result is true for $n-1$. Let $T=(T_1,\dots,T_n)$ be an $\clw$-hypercontraction.
Suppose $(M_{z}\oplus U)$ on $A^2_{\omega_1}(\cld_{\omega_1,T_1})\oplus \clq_{T_1}$ is the dilation of $T_1$ with the canonical dilation map $\Pi_{\omega_1, T_1}:\clh\to A^2_{\omega_1}(\cld_{\omega_1,T_1})\oplus \clq_{T_1} $ as obtained in Theorem~\ref{olo}. Then applying Proposition~\ref{n-1 tuple lift} we get $\clw_{[2}$-hypercontractions $A=(A_2,\dots, A_{n})$ on $\cld_{\omega_1,T_1}$ and $X=(X_2,\dots, X_n)$ on $\clq_{T_1}$
such that 
\[
\Pi_{\omega_1,T_1}T_i^*=V_i^*\Pi_{\omega_1, T_1}\quad (i=2,\dots,n),
\]
where $V_i=(I_{A^2_{\omega_1}}\otimes A_i)\oplus X_i$ for all $i=2,\dots,n$. We now apply the induction hypothesis to both $A$ and $X$. 

Since $A$ is an $\clw_{[2}$-hypercontraction, then by the hypothesis we get Hilbert spaces $\cle_{\Lambda}^{'}=\overline{ran}\Delta^{'}_{\Lambda}$ for all subset $\Lambda$ of $\{2,\dots,n\}$, an isometry 
\[\Pi_A: \cld_{\omega_1,T_1}\to \clk':= \bigoplus_{\Lambda\subseteq \{2,\dots,n\}}A^2_{(\clw_{[2})_{\Lambda}}(\cle_{\Lambda}^{'})=
\bigoplus_{\Lambda\subseteq \{2,\dots,n\}}A^2_{\clw_{\Lambda}}(\cle_{\Lambda}^{'}),
\]
where $\Pi_A = \oplus_{\Lambda \subseteq \{2,\ldots,n\}} \Pi^A_{\Lambda} $ and for $\Lambda= \{\lambda_2,\ldots, \lambda_m\}$, $\Pi^A_{\Lambda}:\cld_{\omega_1,T_1}\to A^2_{\clw_{\Lambda}}(\cle_{\Lambda}^{'}) $
is defined by 
\[ \Pi^A_{\Lambda}h(\z) =  \sum_{\alpha \in \Z_+^{m-1}} 
\Big( \frac{1}{\omega_{\alpha_2}^{(\lambda_2)} \cdots \omega_{\alpha_m}^{(\lambda_m)}} \Delta^{'}_{\Lambda} A_{\Lambda}^{*\alpha} h \Big)\z^{\alpha}
\] 
and $\Delta^{'*}_{\Lambda} \Delta^{'}_{\Lambda}= \text{SOT}-\lim_{\beta \to \infty} 
A_{\Lambda^c}^{\beta } D_{\clw_{\Lambda}, A_{\Lambda}}(1,\ldots,1) A_{\Lambda^c}^{* \beta } $ $(\beta\in \Z_+^{|\Lambda^c|})$. 
We also have an $(n-1)$-tuple of commuting contractions $R'=(R_2',\dots, R_n')$ on $\clk$ having the structure as in ~\eqref{explicit form} such that $\Pi_A A_i^*=R_i^{' *}\Pi_A$. 

Once again applying the hypothesis to the $\clw_{[2}$-hypercontraction $X$ on $\clq_{T_1}$ we also get Hilbert spaces $\cle_{\Lambda}^{''}=\overline{ran}\Delta^{''}_{\Lambda}$ for all subset $\Lambda$ of $\{2,\dots,n\}$, an isometry \[\Pi_X: \clq_{T_1}\to \clk^{''}:= \bigoplus_{\Lambda\subseteq \{2,\dots,n\}}A^2_{(\clw_{[2})_{\Lambda}}(\cle_{\Lambda}^{''})
=\bigoplus_{\Lambda\subseteq \{2,\dots,n\}}A^2_{\clw_{\Lambda}}(\cle_{\Lambda}^{''}),
\]
 where $\Pi_X = \oplus_{\Lambda \subseteq \{2,\ldots,n\}} \Pi^X_{\Lambda} $ and for $\Lambda= \{\lambda_2,\ldots, \lambda_m\}$, $\Pi^X_{\Lambda}:\clq_{T_1}\to A^2_{\clw_{\Lambda}}(\cle_{\Lambda}^{''})$ is defined by 
\[ \Pi^X_{\Lambda}h(\z) =  \sum_{\alpha \in \Z_+^{m-1}} 
\Big( \frac{1}{\omega_{\alpha_2}^{(\lambda_2)} \cdots \omega_{\alpha_m}^{(\lambda_m)}} \Delta^{''}_{\Lambda} X_{\Lambda}^{*\alpha} h \Big)\z^{\alpha}
\] 
and $\Delta^{''*}_{\Lambda} \Delta^{''}_{\Lambda}= \text{SOT}-\lim_{\beta \to \infty} 
X_{\Lambda^c}^{\beta } D_{\clw_{\Lambda}, X_{\Lambda}}(1,\ldots,1) X_{\Lambda^c}^{* \beta } $ $(\beta\in\Z_+^{|\Lambda^c|})$.
We also have an $(n-1)$-tuple of commuting contractions $R^{''}=(R_2^{''},\dots, R_n^{''})$ on $\clk^{''}$ having structure as in ~\eqref{explicit form} such that $\Pi_X X_i^*=R_i^{'' *}\Pi_X$.
Since $U$ is a co-isometry on $\clq_{T_1}$ which commutes with $X$, then by Lemma~\ref{model lemma} we get a co-isometry $W$ on $\clk^{''}$ such that $W$ commutes with $R^{''}$ and $\Pi_X U^*=W^*\Pi_X$. 
We now have all the ingredient to construct the required dilation of $T$. For a subset $\Lambda$ of $I$, we set 
\[
\cle_{\Lambda}:=\left\{\begin{array}{cc}
\cle'_{\Lambda\setminus\{1\}} & \text{if } 1\in\Lambda\\
\cle^{''}_{\Lambda}& \text{otherwise}
\end{array}\right.\ \text{ and } \clk:=\bigoplus_{\Lambda\subseteq I} A^2_{\clw_{\Lambda}}(\cle_{\Lambda}).
\]
Then note that  
\[
A^2_{\omega_1}\otimes\clk'=\bigoplus_{\Lambda\subseteq \{2,\dots,n\}}A^2_{\omega_1}\otimes A^2_{\clw_{\Lambda}}(\cle_{\Lambda}^{'})=\bigoplus_{\Lambda\subseteq I, 1\in\Lambda} A^2_{\clw_{\Lambda}}(\cle_{\Lambda})\ \text{ and }
\clk=(A^2_{\omega_1}\otimes \clk')\oplus \clk^{''}.
\]
We consider the dilation map for $T$ as 
\[
\Pi=((I_{A^2_{\omega_1}}\otimes \Pi_{A})\oplus \Pi_{X})  \circ\Pi_{\omega_1,T_1}: \clh\to \bigoplus_{\Lambda\subseteq \{2,\dots,n\}}A^2_{\omega_1}\otimes A^2_{\clw_{\Lambda}}(\cle_{\Lambda}^{'})\bigoplus_{\Lambda\subseteq \{2,\dots,n\}}A^2_{\clw_{\Lambda}}(\cle_{\Lambda}^{''})=\clk. 
\]
More explicitly, the dilation map has the following decomposition 
\begin{align*}
\Pi &= \bigoplus_{\Lambda \subseteq \{2,\dots,n\}} \big((I_{A^2_{\omega_1}}\otimes \Pi^A_{\Lambda}) \circ \pi_{\omega_1,T_1} \big) \bigoplus_{\Lambda \subseteq \{2,\dots,n\}} \big(\Pi^X_{\Lambda} \circ Q_{T_1} \big)\\
&= \bigoplus_{\Lambda\subseteq I} \Pi_{\Lambda},
\end{align*}
where  
\[
\Pi_{\Lambda}:=\left\{\begin{array}{cc}
(I_{A^2_{\omega_1}}\otimes \Pi^A_{\Lambda\setminus \{1\}}) \circ \pi_{\omega_1,T_1}   & \text{if } 1\in\Lambda\\
\Pi^X_{\Lambda} \circ Q_{T_1} & \text{otherwise.}\
\end{array}\right.
\]
Moreover, for $\Lambda\subseteq I$ with $1\in\Lambda$ and $h\in\clh$,
\begin{equation}\label{1 in lambda}
\Pi_{\Lambda} h (\z) = \sum_{\alpha \in \Z_+^m} 
\frac{1}{\omega^{(1)}_{\alpha_1}} \cdots \frac{1}{\omega^{(m)}_{\alpha_m}} \big(\Delta^{'}_{\Lambda\setminus\{1\}} D_{\omega_1,T_1}T^{* \alpha}h \big) \z^{\alpha},
\end{equation}
and for $1\notin \Lambda$, 
\begin{equation}\label{1 notin lambda}
\Pi_{\Lambda}h (\z) = \sum_{\alpha \in \Z_+^{m-1}} 
\frac{1}{\omega^{(2)}_{\alpha_1}} \cdots \frac{1}{\omega^{(m)}_{\alpha_{(m-1)}}} \big(\Delta^{''}_{\Lambda} Q_{T_1}T^{* \alpha}h \big) \z^{\alpha}.
\end{equation}
Here for the above description of the dilation map we have used the intertwining relations $A_i^*D_{\omega_1,T_1=}D_{\omega_1,T_1}T_i^*$ and $X_i^*\clq_{T_1}=\clq_{T_1}T_i^*$ for all $i=2,\dots,n$, which can be read from equations ~\eqref{intertwining A} and ~\eqref{intertwining X}, respectively.

The dilating tuple of commuting contractions $R=(R_1,\dots, R_n)$ on $\clk$ is defined with respect to the decomposition $\clk=(A^2_{\omega_1}\otimes\clk')\oplus \clk^{''}$ as
\[
R_1=(M_{z}\otimes I_{\clk'})\oplus W \ \text{ and }
R_i=(I_{A^2_{\omega_1}}\otimes R'_i)\oplus R_i^{''}\quad (i=2,\dots,n).
\]
We now do a routine calculation to see that $R$ is a dilation of $T$. First note that 
\begin{align*}
\Pi T_1^* &= ((I_{A^2_{\omega_1}}\otimes \Pi_{A})\oplus \Pi_{X})  (M_{z}\oplus U)^*\Pi_{\omega_1,T_1}\\
&= ((I_{A^2_{\omega_1}}\otimes \Pi_{A}) M_{z}^*\oplus \Pi_{X}U^*)\Pi_{\omega_1,T_1}\\
&=((M_{z}^*\otimes I_{\clk'})\oplus W^*)((I_{A^2_{\omega_1}}\otimes \Pi_{A})\oplus \Pi_{X})\Pi_{\omega_1,T_1}\\
&=R_1^*\Pi,
\end{align*}
and similarly for all $i=2,\dots,n$,
\begin{align*}
\Pi T_i^* &=((I_{A^2_{\omega_1}}\otimes \Pi_{A})\oplus \Pi_{X})  (I_{A^2_{\omega_1}}\otimes A_i\oplus X_i)^*\Pi_{\omega_1,T_1}\\
&=((I_{A^2_{\omega_1}}\otimes R_i')\oplus R_i^{''})^*((I_{A^2_{\omega_1}}\otimes \Pi_{A})\oplus \Pi_{X})  \Pi_{\omega_1,T_1}\\
&= R_i^*\Pi.
\end{align*}
We left it to the reader to check that each of the operator $R_i$ has the form as in ~\eqref{explicit form}. To complete the proof, we now need to construct $\Delta_{\Lambda}\in \clb(\clh)$ corresponding to each subset $\Lambda$ of $I$ which satisfies the moreover part of the theorem. To this end, we define 
\[
\Delta_{\Lambda}:=\left\{\begin{array}{cc}
 \Delta^{'}_{\Lambda\setminus \{1\}} D_{\omega_1,T_1}  & \text{if } 1\in\Lambda\\
 \Delta^{''}_{\Lambda} Q_{T_1} & \text{otherwise.}
\end{array}\right.
\]
Rest of the proof is divided into two cases.

\textbf{Case I:} $1\in\Lambda$. In this case, setting $\Gamma:=\Lambda\setminus\{1\}$, we have
\begin{align*}
\Delta_{\Lambda}^*\Delta_{\Lambda}& = D_{\omega_1,T_1}\Delta^{'*}_{\Gamma}\Delta^{'}_{\Gamma} D_{\omega_1,T_1}\\
& =\text{SOT}-\lim_{\beta \to \infty} D_{\omega_1,T_1}A_{\Lambda^c}^{\beta} D_{\clw_{\Gamma},A_{\Gamma}}(1,\dots,1)A_{\Lambda^c}^{*\beta}D_{\omega_1,T_1}\quad  (\beta\in \Z_+^{|\Lambda^c|})\\
& =\text{SOT}-\lim_{\beta \to \infty}
T_{\Lambda^c}^{\beta}D_{\omega_1,T_1}D_{\clw_{\Gamma},A_{\Gamma}}(1,\dots,1)D_{\omega_1,T_1}T_{\Lambda^c}^{* \beta}\\
& =\text{SOT}-\lim_{\beta \to \infty}
T_{\Lambda^c}^{\beta} D_{\clw_{\Lambda}, T_{\Lambda}}(1,\dots,1)T_{\Lambda^c}^{* \beta}\quad (\text{ by part (i) of Lemma~\ref{useful lemma}}).
\end{align*}
Also if $i\notin \Lambda$ then by construction $\Delta_{\Gamma}'A_i^*= (V_i^{' \Gamma})^*\Delta_{\Gamma}' $, where $V_i^{'\Gamma}$ is the co-isometry corresponding to $R_{i}^{'\Gamma}$, that is,  $R_{i}^{'\Gamma}=I_{A^2_{\clw_{\Gamma}}}\otimes V_i^{'\Gamma}$.
Also since $R_i^{\Lambda}=I_{A^2_{\omega_1}}\otimes R_{i}^{'\Gamma}=I_{A^2_{\clw_{\Lambda}}}\otimes V_i^{'\Gamma}$, the co-isometries corresponding to $R_i^{\Lambda}$ and $R_i^{'\Gamma}$ are the same, that is, $V_i^{\Lambda}=V_i^{'\Gamma}$. Consequently,
\begin{align*}
\Delta_{\Lambda}T_i^*&=\Delta_{\Gamma}'D_{\omega_1, T_1}T_i^*
=\Delta_{\Gamma}'A_i^*D_{\omega_1,T_1}
= (V_i^{'\Gamma})^*\Delta_{\Gamma}'D_{\omega_1,T_1}
=(V_i^{\Lambda})^*\Delta_{\Lambda}.
\end{align*}

\textbf{Case II:} $1 \notin \Lambda$. In this case, setting $\Gamma=\Lambda\cup \{1\}$, we have 
\begin{align*}
\Delta_{\Lambda}^*\Delta_{\Lambda}& = Q_{T_1}\Delta_{\Lambda}^{'' *}\Delta_{\Lambda}^{''}Q_{T_1}\\
&= \text{SOT}-\lim_{\beta \to \infty}
Q_{T_1}X_{\Gamma^c}^{\beta}D_{\clw_{\Lambda}, X_{\Lambda}}(1,\dots,1)X_{\Gamma^c}^{* \beta} Q_{T_1}\quad (\beta\in \Z_+^{|\Gamma^c|})\\
&= \text{SOT}-\lim_{\beta \to \infty}
T_{\Gamma^c}^{\beta}Q_{T_1}D_{\clw_{\Lambda}, X_{\Lambda}}(1,\dots,1)Q_{T_1}T_{\Gamma^c}^{* \beta}\\
&= \text{SOT}-\lim_{\beta \to \infty}T_{\Lambda^c}^{\beta} D_{\clw_{\Lambda}, T_{\Lambda}}(1,\dots,1)T_{\Lambda^c}^{* \beta}\quad (\text{by part (ii) of Lemma~\ref{useful lemma}}).
\end{align*}
If $i\notin \Lambda$ and $i\ge 2$ then by the hypothesis $\Delta^{''}_{\Lambda}X_i^*=(V^{'' \Lambda}_{i})^*\Delta^{''}_{\Lambda}$, where $V^{'' \Lambda}_{i}$ is the co-isometry corresponding to $R^{'' \Lambda}_{i}$. In this case, by construction the co-isometry corresponding to $R_i^{\Lambda}$ and $R^{'' \Lambda}_{i}$ are the same, that is, $V^{\Lambda}_i=V^{'' \Lambda}_i$. Consequently, 
\[
\Delta_{\Lambda}T_i^*=\Delta^{''}_{\Lambda}Q_{T_1}T_i^*=\Delta^{''}_{\Lambda}X_i^*Q_{T_1}= V^{'' \Lambda}_{i}\Delta^{''}_{\Lambda}Q_{T_1}=V^{\Lambda}_{i}\Delta_{\Lambda}.
\]
Finally, for $i=1$ the co-isometry corresponding to $R_1^{\Lambda}$ is $W^{\Lambda}_1$ and it follows from Lemma~\ref{model lemma}
that $\Delta^{''}_{ \Lambda}U^*=W^{\Lambda *}_1\Delta^{'' \Lambda} $. Thus 
\[\Delta_{\Lambda} T_1^*=\Delta^{''}_{ \Lambda}Q_{T_1}T_1^*=\Delta^{''}_{ \Lambda}U^* Q_{T_1}=W^{\Lambda *}_1\Delta^{''}_{ \Lambda} Q_{T_1}=W^{\Lambda *}_1 \Delta_{\Lambda}.
\]
This completes the proof.
\qed

\newsection{Appendix}
Let $T$ be an $\clw$-hypercontraction corresponding to a multi-weight sequence $\clw$. Then for a subset $\Lambda=\{\lambda_1,\dots,\lambda_m\}$ of $I$,
consider the map $f:(0,1)^m \times \Z^{n-m}_+ \to \clb(\clh)$, defined by
\begin{align*}
    f(\bm r^\prime, \beta)=T_{\Lambda^{c}}^{ \beta } D_{\clw_{\Lambda},T_{\Lambda}}(\bm r^\prime) T_{\Lambda^{c}}^{* \beta},
\end{align*}
where $\bm r^\prime=(r_1, \ldots, r_m)\in(0,1)^m$ and $\beta=(\beta_1, \ldots, \beta_{n-m})\in \Z_+^{n-m}$. We claim that $\text{SOT}-\lim_{(\bm r^\prime, \beta)\to (\mathbf e^\prime, \infty)} f(\bm r^\prime, \beta)$ exists, where $\mathbf e^\prime=(1,\ldots,1)$. Indeed, it is enough to show that for $\bm r^\prime\leq \bm s^\prime$ and $\alpha\leq \beta$, $f(\bm s^{\prime},\beta)\le f(\bm r^\prime, \alpha)$.
Let $j\in \Lambda^{c}$ and without any loss of generality assume that $\lambda_m<i$. Then the multi-weight sequence $\clw^\prime=(\clw_\Lambda, \mathds 1)\in S(\clw_{\Lambda^\prime})$, where $\Lambda^\prime=\{\lambda_1, \ldots, \lambda_m, i\}$. Since $T_{\Lambda^\prime}$ is a $\clw_{\Lambda^\prime}$-hypercontraction, then for any $r\in (0,1)$
    \begin{align}\label{Inequ_1}
        D_{\clw^\prime,T_{\Lambda^\prime}}(\bm r^\prime, r)= D_{\clw_{\Lambda},T_{\Lambda}}(\bm r^\prime)-r T_i D_{\clw_{\Lambda},T_{\Lambda}}(\bm r^\prime) T^*_i\geq 0\quad (r\in (0,1)).
    \end{align}
By Remark \ref{split limit}, taking limit as $r\to 1$ in \ref{Inequ_1} we get 
\begin{align*}
    T_i D_{\clw_{\Lambda},T_{\Lambda}}(\bm r^\prime) T^*_i\leq D_{\clw_{\Lambda},T_{\Lambda}}(\bm r^\prime)
\end{align*}
for all $i\in \Lambda^c$. Consequently, for any $\beta=(\beta_1, \ldots, \beta_{n-m})\in \Z_+^{n-m}$,
\begin{align*}
     T_{\Lambda^{c}}^{ \beta } D_{\clw_{\Lambda},T_{\Lambda}}(\bm r^\prime) T_{\Lambda^{c}}^{ * \beta }\leq D_{\clw_{\Lambda},T_{\Lambda}}(\bm r^\prime). 
\end{align*}
Now, for $\alpha\leq \beta$, 
\begin{align*}
     T_{\Lambda^{c}}^{ \alpha} D_{\clw_{\Lambda},T_{\Lambda}}(\bm r^\prime) T_{\Lambda^{c}}^{ * \alpha }- T_{\Lambda^{c}}^{ \beta } D_{\clw_{\Lambda},T_{\Lambda}}(\bm r^\prime) T_{\Lambda^{c}}^{ * \beta }= T_{\Lambda^{c}}^{ \alpha } \Big( D_{\clw_{\Lambda},T_{\Lambda}(\bm r^\prime) - T_{\Lambda^{c}}^{ \beta-\alpha } D_{\clw_{\Lambda},T_{\Lambda}}(\bm r^\prime) T_{\Lambda^{c}}^{* \beta-\alpha }}\Big) T_{\Lambda^{c}}^{ * \alpha}\ge 0.
\end{align*}
That is, 
 \begin{align}\label{Inequ_2}
     T_{\Lambda^{c}}^{ \beta } D_{\clw_{\Lambda},T_{\Lambda}}(\bm r^\prime) T_{\Lambda^{c}}^{ * \beta }\leq  T_{\Lambda^{c}}^{ \alpha } D_{\clw_{\Lambda},T_{\Lambda}}(\bm r^\prime) T_{\Lambda^{c}}^{ * \alpha }
 \end{align}
 for all $\alpha\leq \beta$.
Finally, using Proposition \ref{Pro_Decr} and the inequality \ref{Inequ_2}, for $\bm r^\prime\leq \bm s^\prime$ and $\alpha\leq \beta$,  
\begin{align*}
      f(\bm r^\prime, \alpha)=T_{\Lambda^{c}}^{ \alpha } D_{\clw_{\Lambda},T_{\Lambda}}(\bm r^\prime) T_{\Lambda^{c}}^{ * \alpha }\geq T_{\Lambda^{c}}^{ \alpha } D_{\clw_{\Lambda},T_{\Lambda}}(\bm s^\prime) T_{\Lambda^{c}}^{ * \alpha }\geq  T_{\Lambda^{c}}^{ \beta } D_{\clw_{\Lambda},T_{\Lambda}}(\bm s^\prime) T_{\Lambda^{c}}^{ * \beta }= f(\bm s^\prime, \beta). 
\end{align*}
This proves the claim.

\vspace{0.1in} \noindent\textbf{Acknowledgement:}
The authors are grateful to Prof. Jaydeb Sarkar for some insightful discussions at the beginning of this project. The first named author acknowledges IIT Bombay for warm hospitality, and his research is supported by the DST-INSPIRE Faculty Fellowship No. DST/INSPIRE/04/2020/001250. The second author is supported by the Mathematical Research Impact Centric Support (MATRICS) grant, File No: MTR/2021/000560, by the Science and Engineering Research Board (SERB), Department of Science \& Technology (DST), Government of India. The third named author acknowledges IIT Bombay for its warm hospitality. The research of the third named author is supported by the institute post-doctoral fellowship of IIT Bombay.



\bibliographystyle{amsplain}

\end{document}